\documentclass{amsart}
\usepackage{amsmath}
\usepackage{amsthm}
\usepackage{amssymb}
\usepackage{bm}
\usepackage[margin=0.5in]{caption}
\usepackage{dsfont}
\usepackage{enumitem}
\usepackage[style=plain]{floatrow}
\usepackage{graphicx}
\usepackage[hidelinks]{hyperref}
\usepackage[capitalise]{cleveref}
\usepackage{mathrsfs}
\usepackage{mathtools}
\usepackage{nameref}
\usepackage[new]{old-arrows}
\usepackage{stmaryrd}
\usepackage{svg}
\usepackage{thmtools}
\usepackage{xcolor}
\usepackage{xparse}

\frenchspacing 

\usepackage{tikz}
\usetikzlibrary{cd}
\usetikzlibrary{positioning, angles, quotes}
\newcommand{\nospacepunct}[1]{\makebox[0pt][l]{\,#1}} 

\theoremstyle{plain}
\newtheorem{thm}{Theorem}[section]
\newtheorem*{thm*}{Theorem}

\newtheorem{cor}[thm]{Corollary}
\newtheorem*{cor*}{Corollary}

\newtheorem{prop}[thm]{Proposition}
\newtheorem*{prop*}{Proposition}

\newtheorem{lem}[thm]{Lemma}
\newtheorem*{lem*}{Lemma}

\newtheorem{claim}[thm]{Claim}
\newtheorem*{claim*}{Claim}

\newtheorem*{exer*}{Exercise}

\newtheorem*{q*}{Question}

\newtheorem{conj}[thm]{Conjecture}
\newtheorem*{conj*}{Conjecture}

\theoremstyle{definition}
\newtheorem{defn}[thm]{Definition}
\newtheorem*{defn*}{Definition}

\newtheorem{ex}[thm]{Example}
\newtheorem*{ex*}{Example}

\newtheorem{step}{Step}

\theoremstyle{remark}
\newtheorem{rem}[thm]{Remark}
\newtheorem*{rem*}{Remark}

\newtheorem{conv}[thm]{Convention}
\newtheorem*{conv*}{Convention}

\theoremstyle{plain}

\newenvironment{manualtheorem}[1]{%
  \manualtheoreminner
}{\endmanualtheoreminner}

\newenvironment{manualcor}[1]{%
  \manualcorinner
}{\endmanualcorinner}

\numberwithin{equation}{section}

\Crefname{thm}{Theorem}{Theorems}
\Crefname{defn}{Definition}{Definitions}
\Crefname{claim}{Claim}{Claims}
\Crefname{ex}{Example}{Examples}
\Crefname{prop}{Proposition}{Propositions}
\Crefname{equation}{Equation}{Equations}
\Crefname{figure}{Figure}{Figures}


\newcommand{\BS}{\mathrm{BS}}




\newcommand{\U}{\mathcal U}

\renewcommand{\b}{b^{(2)}}

\newcommand{\Dg}{\mathcal{D}_{K[G]}}
\newcommand{\Dh}{\mathcal{D}_{K[H]}}

\newcommand{\bG}{b^{K[G]}}

\newcommand{\bH}{b^{K[H]}}

\newcommand{\C}{\mathbb{C}}

\newcommand{\Q}{\mathbb{Q}}
\newcommand{\R}{\mathbb{R}}
\newcommand{\Z}{\mathbb{Z}}  
\newcommand{\D}{\mathcal{D}}

\newcommand{\inv}{^{-1}}


\DeclareMathOperator{\cd}{cd}

\DeclareMathOperator{\im}{im}

\DeclareMathOperator{\rk}{rk}

\DeclareMathOperator{\Tor}{Tor}
\newcommand{\ran}{\rangle}
\newcommand{\lan}{\langle}
\renewcommand{\leq}{\leqslant}

\newcommand{\Ga}{\Gamma}
\renewcommand{\subset}{\subseteq}

\newcommand{\lrar}{\longrightarrow}


\newcommand{\po}{\colon}


\newcommand{\vertii}[1]{{\left\vert\kern-0.25ex\left\vert #1 \right\vert\kern-0.25ex\right\vert}}
\newcommand{\vertiii}[1]{{\left\vert\kern-0.25ex\left\vert\kern-0.25ex\left\vert #1 \right\vert\kern-0.25ex\right\vert\kern-0.25ex\right\vert}}


\newcommand{\rchi}{\overline{\chi}}
\newcommand{\E}{\mathsf{Edge}}
\newcommand{\V}{\mathsf{Vert}}
\renewcommand{\o}{\mathsf{o}}
\renewcommand{\t}{\mathsf{t}}

\newcounter{comments}

\title[The Hanna Neumann conjecture for graphs of free groups]{The Hanna Neumann conjecture for graphs of free groups with cyclic edge groups}
\author{Sam P.~Fisher}
\author{Ismael Morales}
\address[S.~P.~Fisher and I.~Morales]{Mathematical Institute, 
Andrew Wiles Building,  
Observatory Quarter, 
University of Oxford, 
Oxford,
OX2 6GG,
United Kingdom}
\email{sam.fisher@maths.ox.ac.uk}
\email{morales@maths.ox.ac.uk}

\subjclass[2020]{20F65, 20J05}
\keywords{Hanna Neumann Conjecture, $L^2$-Betti numbers, graphs of free groups with cyclic edge groups, limit groups}

\begin{document}

\begin{abstract} 
    The Hanna Neumann Conjecture (HNC) for a free group $G$ predicts that $\rchi(U\cap V)\leq \rchi (U)\rchi(V)$  for all finitely generated subgroups $U$ and $V$, where $\rchi(H) = \max\{-\chi(H),0\}$ denotes the \textit{reduced Euler characteristic} of $H$. A strengthened version of the HNC was proved independently by Friedman and Mineyev in 2011. Recently, Antolín and Jaikin-Zapirain introduced the $L^2$-Hall property and showed that if $G$ is a hyperbolic limit group that satisfies this property, then $G$ satisfies the HNC. Antolín and Jaikin-Zapirain established the $L^2$-Hall property for free and surface groups, which Brown and Kharlampovich extended to all limit groups. In this article, we prove the $L^2$-Hall property for graphs of free groups with cyclic edge groups that are hyperbolic relative to virtually abelian subgroups. We also give another proof of the $L^2$-Hall property for limit groups.  As a corollary, we show that all these groups satisfy a strengthened version of the HNC.
\end{abstract}

\maketitle

\section{Introduction}

A group $G$ has the \textit{Howson property} if, for all finitely generated subgroups $U, V\leq G$, the intersection $U \cap V$ is finitely generated. The property is named after Albert G.~Howson, who proved it for free groups in \cite{Howson_fgFree}. Shortly thereafter, Hanna Neumann \cite{HNeumann_fgFree} quantified this property by proving that
\begin{equation*}  
    \rk(U \cap V) - 1 \leqslant 2(\rk(U) - 1)(\rk(V) - 1)
\end{equation*}
whenever $U$ and $V$ are finitely generated subgroups of a common free group, and she conjectured that the factor of $2$ on the right-hand side of the inequality could be dropped. This became known as the {\it Hanna Neumann Conjecture}, and was the beginning of a fruitful line of research concerning these type of inequalities \cite{Dic94, Tar96, Min11}. As we shall see, such inequalities are not limited to free groups. 

Walter Neumann \cite{WNeumann_fgFree} formulated a stronger version of H.~Neumann's conjecture, described in \cref{SHNC}.  Given a group $G$ with a finite classifying space, we denote by $\rchi(G) = \max\{-\chi(G),0\}$ the \textit{reduced Euler characteristic} of $G$. 

\begin{conj}[The Strengthened Hanna Neumann Conjecture (SHNC)] \label{SHNC}
    Let $U$ and $V$ be finitely generated subgroups of a free group $G$. Let $T$ be a complete set of representatives for the double $(U,V)$-cosets in $G$. Then
    \begin{equation} \label{eq:SHNC}
        \sum_{t \in T} \rchi(U \cap V^t) \leqslant \rchi(U) \rchi(V).
    \end{equation}
\end{conj}

The statement of \cref{SHNC} makes sense whenever $G$ is a group such that all of its finitely generated subgroups are of finite type (so that $\overline{\chi}(H)$ is defined for all finitely generated $H \leqslant G$). \Cref{SHNC} was resolved independently by Friedman \cite{Friedman_HN} and Mineyev \cite{Mineyev_HN}. More recently, Jaikin-Zapirain \cite{Jaikin_HNfreeprop} gave an alternative proof which also applies to free pro-$p$ groups $G$. Later on, groups of dimension $2$ were shown to satisfy \cref{SHNC}, such as Demushkin groups by Jaikin-Zapirain--Shusterman \cite{JaikinShusterman_Demushkin} and surface groups by Antolín--Jaikin-Zapirain \cite{Jai22}. An important aspect of the latter article is that the authors introduce the \textit{$L^2$-Hall property} as an intermediate step towards establishing \cref{SHNC} for surface groups. This opened the possibility to showing that the SHNC holds for many more classes of groups.

We briefly recall the $L^2$-Hall property mentioned above before stating our results. Let $\U(G)$ denote the algebra of affiliated operators of a group $G$. Then  $G$ is said to have the $L^2$-Hall property if for all finitely generated subgroups $H\leq G$ there exists a finite-index subgroup $G_1\leq G$ containing $H$ such that the kernel of the corestriction map 
\[
    H_1(H; \mathcal U(G)) \lrar H_1(G_1; \mathcal U(G))
\]
has zero $\U(G)$-dimension (see \cref{def:L2indep,def:L2Hall} for more details). This property is named $L^2$-Hall because of its similarity with the local retractions property which M.~Hall property \cite{Hal49} established for free groups: if $F$ is free and $H$ is a finitely generated subgroup, then there is a finite-index subgroup $G \leqslant F$ containing $H$ and a retraction $G \lrar H$. The local retractions property was extended to surface groups by Scott \cite{Scott_surfLerf} and subsequently to all limit groups by Wilton \cite{WiltonHall}. 

Antolín--Jaikin-Zapirain proved that free and surface groups have the $L^2$-Hall property \cite[Theorem 4.4]{Jai22} and showed that if $G$ is a hyperbolic limit group that has the $L^2$-Hall property, then \cref{SHNC} holds for $G$ \cite[Theorem 1.3]{Jai22}. Recently, Brown and Kharlampovich \cite[Corollary 28]{BrownKar2023quantifying} proved  that the $L^2$-Hall property holds for limit groups  and hence that  \cref{SHNC} holds for  hyperbolic limit groups $G$.

The main result of this article establishes the $L^2$-Hall property for toral relatively hyperbolic graphs of free groups with cyclic edge groups (and hence \cref{SHNC} for these groups (\cref{cor:C})). This is a class of groups that contains not only free groups, surface groups, and some limit groups, but also groups that do not fit into these classes, like the one-relator group with presentation $\lan a, b, c \mid a^2 b^2 c^3 \ran$ (see \cref{introrem}).

\begin{manualtheorem}{A}[\cref{thm:GOFGWCEG}] \label{thm:A}
    Let $G$ be a group splitting as a finite graph of finitely generated free groups with cyclic edge groups. If $G$ is hyperbolic relative to virtually abelian subgroups, then $G$ satisfies the $L^2$-Hall property.
\end{manualtheorem}

We can also prove the $L^2$-Hall property for the class of limit groups.  Perhaps the most famous characterisation of this class is the one confirmed by Sela \cite{Sela06} in his solution of Tarski's problem on classifying finitely generated groups with the same existential theory as a free group. Kharlampovich and Miasnikov also made powerful advances on the structure theory of limit groups, proving that limit groups are exactly the finitely generated subgroups of ICE groups \cite{KharlampovichMyasnikov1998}; this is the smallest class of groups containing all finitely generated free groups that is closed under extending centralisers (\cref{def:ICE}). Wilton \cite{WiltonHall} used this hierarchy in his proof of the local retractions property for limit groups. We build on the methods of Wilton to  establish the $L^2$-Hall property for limit groups in our next result, giving an alternative proof of \cite[Corollary 28]{BrownKar2023quantifying}. The potential interest in revisiting the $L^2$-Hall property for limit groups is to give an inductive argument that could work for more general finite abelian hierarchies (see \cref{conj:specialyHypSHNC} below). 

\begin{manualtheorem}{B}[\cref{thm:L2tame}] \label{thm:B}
   Limit groups satisfy the $L^2$-Hall property.
\end{manualtheorem}

Antolín and Jaikin-Zapirain's proof that the $L^2$-Hall property implies the SHNC for hyperbolic limit groups also applies to toral relatively hyperbolic graphs of free groups with cyclic edge groups and all limit groups. To see this, one needs to incorporate  recent results of Minasyan \cite{Minasyan_WZ} and Minasyan--Mineh \cite{MinasyanMineh_QC} on the Wilson--Zalesskii property and double coset separability, which were not available to Antolín--Jaikin-Zapirain. We review how all these ingredients fit together in \cref{sec:L2SHNC}. Thus, the following is a consequence of Theorems \ref{thm:A} and \ref{thm:B}.

\begin{manualcor}{C}[\cref{cor:HNGFGCEG}] \label{cor:C}
    Suppose that $G$ is either a limit group or that it splits as a finite graph of free groups with cyclic edge groups that is hyperbolic relative to virtually abelian subgroups. Then $G$ satisfies the Strengthened Hanna Neumann conjecture.
\end{manualcor}

The proofs of Theorems \ref{thm:A} and \ref{thm:B} are inspired by Wise's proof of subgroup separability in graphs of free groups with cyclic edge group \cite{Wise_subgroupSep} and Wilton's proof of the local retractions property in limit groups \cite{WiltonHall}, respectively. In the proof of \cref{thm:A}, we make crucial use of the following result at several points.

\begin{manualtheorem}{D}[\cref{overgroups}] \label{thm:D} 
    Let $G$ be a finitely generated locally indicable group with $\cd(G)=2$ and $b_2^{(2)}(G)=0$. Suppose that $G$ has a finite-index subgroup that satisfies the $L^2$-Hall property. Then $G$ satisfies the $L^2$-Hall property.
\end{manualtheorem}

We give one of the reasons why \cref{thm:D} (or, in fact, the stronger version that we prove in \cref{overgroups}) is needed in our proof of \cref{thm:A}. Wise showed in \cite[Theorem 4.18]{Wise_subgroupSep} that subgroup separable (in particular, toral relatively hyperbolic) graphs of free groups with cyclic edge groups have finite-index subgroups that are fundamental groups of \textit{clean} graphs of graphs with $S^1$ edge spaces (here clean means that the edge maps are embeddings). In Wise's argument, it suffices to work with clean graphs of spaces because virtually subgroup separable groups are, again, subgroup separable. However, in general, it is unclear whether the $L^2$-Hall property passes to finite-index overgroups. Thankfully, \cref{thm:D} implies that this is true in our setting. Note that not all subgroup separable graphs of free groups with cyclic edge groups are $L^2$-Hall; for instance, $F_2 \times \Z$ is not $L^2$-Hall.

\begin{rem} \label{introrem} There are conjectures that relate the classes of groups of Theorems \ref{thm:A} and \ref{thm:B}. Wise  asked whether graphs of free groups with cyclic edge groups are virtually limit groups if and only if they do not contain $F_2 \times \Z$ (see \cite[Problem 1.5]{Wis18}). If this was true, then   \cref{thm:D}, together with  the $L^2$-Hall property for limit groups (as proved in \cite{BrownKar2023quantifying} or \cref{thm:B}) would imply \cref{thm:A}. This is the case for the hyperbolic one-relator group $G=\lan a, b, c \mid a^2 b^2 c^3 \ran$, which is a non-limit group that falls under the assumptions of \cref{thm:A}, while it is also virtually limit by \cite{Wis18}.
\end{rem}

It is desirable to have a class of groups satisfying the SHNC containing both the graphs of free groups under consideration and limit groups, as this would provide a unifying framework for our results. We conclude the introduction by proposing such a class. Let $\mathcal C_0$ be the class of groups containing only the trivial group. Inductively, we define $\mathcal C_{n+1}$ to be the class of groups $G$ such that either $G$ is virtually in $\mathcal C_n$ or $G$ has  the form $H *_A$ (resp. $H *_A K$), where $H$ (resp. $H$ and $K$) belong to $\mathcal C_n$ and $A$ is a finitely generated free abelian group. We say that $G$ admits a \textit{finite abelian hierarchy} if it lies in $\mathcal C_n$ for some $n$.

\begin{conj}\label{conj:specialyHypSHNC}
    Let $G$ be a group that admits a finite abelian hierarchy. Suppose that $G$ is torsion-free and hyperbolic relative to  virtually abelian subgroups. Then $G$ is $L^2$-Hall and satisfies the Strengthened Hanna Neumann Conjecture. 
\end{conj}

The inductive proof of \cref{thm:B} already suggests that the argument could carry over into \cref{conj:specialyHypSHNC}. 

\subsection{Organisation of the paper}

In \cref{sec:prelims}, we recall some standard notions that will appear throughout the article, such as graphs of groups (and spaces), group homology, and in particular $L^2$-homology of groups. In \cref{sec:L2hall}, we discuss the $L^2$-Hall property  and discuss both examples and non-examples. The main result of this section is \cref{cor:overgroups}, which gives sufficient conditions to conclude that a group with an $L^2$-Hall subgroup of finite index is itself $L^2$-Hall. This result is crucial in the proof of \cref{thm:A}, which is given in \cref{sec:GofFG}. We prove \cref{thm:B} in \cref{sec:ICE} by adequately modifying Wilton's argument on the local retractions property for these groups. Finally, in \cref{sec:L2SHNC}, we review the  arguments of Antolín--Jaikin-Zapirain to explain how \cref{cor:C} follows from our results combined with recent advances of Minasyan and Mineh on double coset separability.

\subsection{Acknowledgments}

The first author has received funding from the European Research Council (ERC) under the European Union's Horizon 2020 research and innovation programme (Grant agreement No. 850930) and from the National Science and Engineering Research Council (NSERC) [ref.~no.~567804-2022]. The second author is funded by the Oxford--Cocker Graduate Scholarship. 

The authors wish to thank Dawid Kielak and Richard Wade for many helpful suggestions, and Aaron W. Messerla and Henry Wilton for  very useful conversations. The authors would also like to thank  Zachary Munro for generously answering many questions about graphs of free groups with cyclic edge groups.

\section{Preliminaries}\label{sec:prelims}

\subsection{Graphs of groups and spaces}

Graphs of groups were introduced as combinatorial objects in \cite{Serre_arbres}. In \cite{ScottWall_top}, Scott and Wall introduced graphs of spaces in order to study graphs of groups topologically. Since we will use both viewpoints in this article, we take the time to introduce them here.

Throughout this subsection, $\Gamma$ denotes a directed graph, $\V(\Gamma)$ and $\E(\Gamma)$ denote the vertex and edge sets of $\Gamma$, respectively. For any edge $e \in \E(\Gamma)$, let $\mathsf{o}(e) \in \V(\Gamma)$ and $\mathsf{t}(e) \in \V(\Gamma)$ denote the origin and terminus of $e$.

\begin{defn}[Graph of groups] \label{defngraphgroups}
    A \textit{graph of groups} $\mathcal G$ consists of the following data:
    \begin{enumerate}
        \item a connected directed graph $\Gamma$, called the \textit{underlying graph} of $\mathcal G$;
        \item groups $G_v$ and $G_e$ for every vertex $v \in \V(\Gamma)$ and edge $e \in \E(\Gamma)$;
        \item monomorphisms $\varphi_{e,\o} \colon G_e \lrar G_{\o(e)}$ and $\varphi_{e,\t} \colon G_e \lrar G_{\t(e)}$ for every edge $e \in \E(\gamma)$.
    \end{enumerate}
    The groups $G_v$ and $G_e$ are called the \textit{vertex groups} and \textit{edge groups} of $\mathcal G$. The monomorphisms $\varphi_{e,\o}$ and $\varphi_{e,\t}$ are called the \textit{edge maps} of $\mathcal G$. 
\end{defn}

We now review two ways to look at the fundamental group of a graph of groups.  

\begin{defn}[Based fundamental group]
    With the same notation as in  \cref{defngraphgroups}, let $v_0 \in \V(\Gamma)$ be a base vertex and for each $e \in \E(\gamma)$ introduce the formal symbol $t_e$. Let $P(\mathcal G)$ be the group freely generated by the vertex groups $G_v$ and the symbols $t_e$ subject to the relations $t_e \varphi_{e,\t}(g) t_e\inv = \varphi_{e,\o}(g)$ for $e \in \E(\Gamma)$ and $g \in G_e$. The \textit{fundamental group} of the graph of groups based at $v_0$, denoted $\pi_1(\mathcal G, v_0)$, is the subgroup of $P(\mathcal G)$ consisting of the elements that can be represented as words $g_0 t_{e_1}^{\varepsilon_1} g_1 \cdots t_{e_n}^{\varepsilon_n} g_n$, where $g_i \in G_{\t(e_i)}$, where
    \[
        \varepsilon_i = \pm 1, \quad \begin{cases}
            g_i \in F_{\t(e_i)} & \text{if} \ \varepsilon_i = 1 \\
            g_i \in F_{\o(e_i)} & \text{if} \ \varepsilon_i = -1
        \end{cases},
        \quad g_0, g_n \in G_{v_0}
    \]
    and where $(e_1, \dots, e_n)$ forms a (not necessarily directed) loop.
\end{defn}

An equivalent way to look at the fundamental group is explained in \cref{defn:tree_pi1}. This will be used when considering splittings of $\mathcal G$ over simpler subgraphs of groups as in \cref{prop:monster}.

\begin{defn}[Fundamental group relative to a spanning tree]\label{defn:tree_pi1}
    With the notation of \cref{defngraphgroups}, let $T$ be a spanning tree of $\Ga$. The \textit{fundamental group} of $\mathcal G$ relative to $T$, denoted by $\pi_1(\mathcal G, T)$, is the group freely generated by the groups $G_v$ for all $v\in \V(\Ga)$, and the formal symbols $t_e$ for all $e\in \E(\Ga)$, subject to two types of relations: $t_e\phi_{e, \o}(x)t_e^{-1}=\phi_{e, \t}(x)$ for all $e\in \E(\Ga)$ and $x\in G_e$; and $t_e=1$ for all $e\in \E(\Ga)\smallsetminus \E(T)$. The two definitions coincide by \cite[Proposition 20, Chapitre I, \S5]{Serre_arbres}.
\end{defn}

A graph of groups is \textit{finite} if its underlying graph is finite. If $G$ is isomorphic to the fundamental group of a graph of groups, we say that $G$ \textit{splits} as a graph of groups. In this situation, we will often abuse terminology and say that $G$ \textit{is} a graph of groups. If the vertex and edge groups of a graph of groups $\mathcal G$ lie in classes $\mathcal C$ and $\mathcal D$, respectively, then we will say that $\mathcal G$ (or its fundamental group) is a graph of $\mathcal C$ groups with $\mathcal D$ edge groups. We will be mostly interested in graphs of free groups with cyclic edge groups in this article. A notable subclass which will appear is the class of \textit{generalised Baumslag--Solitar groups}, which are the groups that split as finite graphs of $\Z$'s with $\Z$ edge groups.

\begin{defn}\label{def:subgraph}
    Let $\mathcal G = (G_v, G_e; \Gamma)$ be a graph of groups. A graph of groups $\mathcal H = (H_v, H_e; \Upsilon)$ is a \textit{subgraph of groups} of $\mathcal G$ if 
    \begin{enumerate}
        \item there is an injection $\Upsilon \hookrightarrow \Gamma$ (via which we think of $\Upsilon$ as a subgraph of $\Gamma$),
        \item there are inclusions $f_v \colon H_v \longhookrightarrow G_v$ and $f_e \colon H_e \longhookrightarrow G_e$ for all vertices and edges of $\Upsilon$ (via which we think of every $H_v$ (resp.~$H_e$) as a subgroup of $G_v$ (resp.~$G_e$)), 
        \item $H_e = H_{\o(e)} \cap G_e$ and $H_e = H_{\t(e)} \cap G_e$ for every $e \in \E(\Upsilon)$, and
        \item the diagrams
        \[
            \begin{tikzcd}
                H_e \arrow[d, "f_e", hook] \arrow[r, hook] & H_{\mathsf o(e)} \arrow[d, "f_{\mathsf o(e)}", hook] \\
                G_e \arrow[r, hook]                        & G_{\mathsf o(e)}
            \end{tikzcd} \quad \text{and} \quad
            \begin{tikzcd}
                H_e \arrow[d, "f_e", hook] \arrow[r, hook] & H_{\mathsf t(e)} \arrow[d, "f_{\mathsf t(e)}", hook] \\
                G_e \arrow[r, hook]                        & G_{\mathsf t(e)}
            \end{tikzcd}
        \]
        commute for all $e \in \E(\Upsilon)$, where the horizontal maps are the edge maps of the respective graphs of groups.
    \end{enumerate}
\end{defn}

\begin{lem}[{\cite[Corollary 1.14]{Bas93}}]\label{lem:subgraph_inject}
    If $\mathcal G$ is a graph of groups and $\mathcal H$ is a subgraph of groups, then there is a canonical injective homomorphism $\pi_1(\mathcal H, v) \longhookrightarrow \pi_1(\mathcal G, v)$ for any vertex $v$ in the underlying graph of $\mathcal H$.
\end{lem}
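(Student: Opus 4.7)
My plan is to construct the homomorphism explicitly from the spanning-tree presentations of \cref{defn:tree_pi1} and then establish injectivity via the normal form theorem for graphs of groups. First, I would choose a spanning tree $T_{\mathcal H}$ of $\Upsilon$ and extend it to a spanning tree $T$ of $\Gamma$. Using these, I would define
\[
    \Phi \colon \pi_1(\mathcal H, T_{\mathcal H}) \lrar \pi_1(\mathcal G, T)
\]
on generators by sending each $x \in H_v$ to $f_v(x) \in G_v$ and each symbol $t_e^{\mathcal H}$ (for $e \in \E(\Upsilon)$) to $t_e^{\mathcal G}$. The commuting squares in condition (iv) of \cref{def:subgraph} guarantee that the relations $t_e \varphi^{\mathcal H}_{e,\o}(x) t_e^{-1} = \varphi^{\mathcal H}_{e,\t}(x)$ are sent to the corresponding relations in the codomain, and the inclusion $T_{\mathcal H} \subset T$ takes care of the relations $t_e = 1$, so $\Phi$ is well-defined. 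A standard change-of-spanning-tree argument then transfers $\Phi$ to a map between the based fundamental groups at any vertex $v \in \V(\Upsilon)$.

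For injectivity, I would invoke the normal form theorem for graphs of groups: an element represented by a word $g_0 t_{e_1}^{\varepsilon_1} g_1 \cdots t_{e_n}^{\varepsilon_n} g_n$ with $n \geq 1$ is nontrivial provided the word is reduced, meaning that whenever $e_i = e_{i+1}$ and $\varepsilon_i = -\varepsilon_{i+1}$, the factor $g_i$ does not lie in the image of the appropriate edge map into the ambient vertex group. The crux is then to show that a word which is reduced in $\pi_1(\mathcal H, T_{\mathcal H})$ remains reduced when read in $\pi_1(\mathcal G, T)$. Assume otherwise: then some factor $g_i \in H_{v_i}$ lies in the image of $\varphi^{\mathcal G}_{e_i, \bullet} \colon G_{e_i} \lrar G_{v_i}$ but not in the image of $\varphi^{\mathcal H}_{e_i, \bullet} \colon H_{e_i} \lrar H_{v_i}$. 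Viewed inside $G_{v_i}$, this places $g_i$ in $H_{v_i} \cap G_{e_i}$; but condition (iii) of \cref{def:subgraph}, combined with the commuting diagrams in (iv), identifies this intersection with $\varphi^{\mathcal H}_{e_i, \bullet}(H_{e_i})$, which is a contradiction.

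The main obstacle I expect is not conceptual but organisational: condition (iii) of \cref{def:subgraph} identifies subgroups across the four maps $f_e$, $f_v$, $\varphi^{\mathcal G}_{e,\bullet}$, and $\varphi^{\mathcal H}_{e,\bullet}$, and one needs to interpret the equality $H_e = H_v \cap G_e$ in exactly the ambient group and under exactly the identifications demanded by the normal form theorem. Once those bookkeeping choices are fixed, the algebraic-closure hypothesis is precisely the statement that no Britton-type reduction available in $\mathcal G$ is hidden from $\mathcal H$, so no new cancellations can arise in the image and injectivity follows immediately from the normal form theorem.
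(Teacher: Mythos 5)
The paper does not prove this lemma; it simply cites Bass's Corollary 1.14, so there is no in-paper argument to compare your proposal against. Your proof via the normal form (Britton-type) theorem for graphs of groups is correct and self-contained. The decisive observation is exactly the one you isolate: condition (iii) of \cref{def:subgraph}, namely $H_e = H_{\o(e)} \cap G_e$ and $H_e = H_{\t(e)} \cap G_e$ interpreted via the inclusions $f_\bullet$ and the edge maps, is precisely what guarantees that a word reduced with respect to $\mathcal H$ stays reduced with respect to $\mathcal G$ — any pinch available in $\mathcal G$ would force a factor $g_i \in H_{v_i}$ into the image of the $\mathcal G$-edge map, which by (iii) together with the commuting squares of (iv) identifies $g_i$ with an element of the image of the $\mathcal H$-edge map, contradicting $\mathcal H$-reducedness. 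Two small housekeeping points you should add: first, you must separately treat words with $n = 0$, where injectivity reduces to injectivity of $f_{v}$ together with the fact (also part of the normal form package) that each vertex group embeds in the fundamental group; second, the "change-of-spanning-tree" step needs that the isomorphisms $\pi_1(\mathcal H, v) \cong \pi_1(\mathcal H, T_{\mathcal H})$ and $\pi_1(\mathcal G, v) \cong \pi_1(\mathcal G, T)$ are intertwined by $\Phi$, which indeed holds because the tree path from $v$ to any vertex of $\Upsilon$ can be chosen inside $T_{\mathcal H} \subset T$. For comparison, Bass's own route proceeds through his covering theory for graphs of groups and the action on the Bass--Serre tree, which is equivalent in substance; your Britton-style argument is more elementary and arguably more transparent for this particular statement, though it is more tied to the specific generators-and-relations presentation.
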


Finally, note that if $H$ is an arbitrary subgroup of a graph of groups $G$, then $H$ inherits a graph of groups structure, which comes from the action of $H$ on the Bass--Serre tree associated to $G$ (see \cite[Th\'eor\`eme 13, Chapitre I, \S5]{Serre_arbres}).

We will often switch between the graph of groups and graph of spaces viewpoint, the latter of which we introduce now.

\begin{defn}[Graph of spaces]
    A \textit{graph of spaces} $\mathcal X$ consists of the following data:    
    \begin{enumerate}
        \item a connected directed graph $\Gamma$, called the \textit{underlying graph} of $\mathcal X$;
        \item based connected CW-complexes $(X_v,x_v)$ and $(X_e,x_e)$ for every vertex $v \in \V(\Gamma)$ and edge $e \in \E(\Gamma)$;
        \item based $\pi_1$-injective continuous maps $f_{e,\o} \colon X_e \lrar X_{\o(e)}$ and $f_{e,\t} \colon X_e \lrar X_{\t(e)}$ for every edge $e \in \E(\gamma)$.
    \end{enumerate}
    The spaces $X_v$ and $X_e$ are called the \textit{vertex spaces} and the \textit{edge spaces} of $\mathcal X$. The maps $f_{e,\o}$ and $f_{e,\t}$ are called the edge maps. The \textit{geometric realisation} of $\mathcal X$ is the quotient of the space
    \[
        X = \left(\bigsqcup_{v \in \V(\Gamma)} X_v\right)  \sqcup  \left( \bigsqcup_{e \in \E(\Gamma)} X_e \times [0,1] \right)
    \]
    by the relations $f_{e,\o}(x) \sim (x,0)$ and $\varphi_{e,1}(x) \sim (x,1)$ for all $x \in X_e$ and all $e \in \V(E)$. The \textit{fundamental group} of the topological space $\mathcal X$ based at $x_{v_0}$, denoted $\pi_1(\mathcal X,x_{v_0})$, is defined to be $\pi_1(X, x_{v_0})$. When no confusion arises, we will usually refer to the geometric realisation of $\mathcal X$ as a graph of spaces.
\end{defn}

There is a correspondence between graphs of spaces and graphs of groups. If $\mathcal G$ is a graph of groups, then a graph of spaces $\mathcal X$ can be constructed as follows: For each $v \in \V(\Gamma)$ and $e \in \E(\Gamma)$, let $X_v = K(G_v,1)$ and $X_e = K(G_e,1)$, and let $f_{e,\o}$ and $f_{e,\t}$ be maps inducing $\varphi_{e,\o}$ and $\varphi_{e,\t}$. Then there is an isomorphism between $\pi_1(\mathcal G, v_0)$ and $\pi_1(\mathcal X, x_{v_0})$ (which depends on the choices of $v_0$ and $x_{v_0}$ up to conjugation). Similarly, given a graph of spaces, we can form a graph of groups with vertex groups $\pi_1(X_v, x_v)$, edge groups $\pi_1(X_e, x_e)$, and edge maps $(f_{e,\o})_*$ and $(f_{e,\t})_*$.

One of our main results concerns graphs of groups where all the vertex groups are free groups and all the edge groups are infinite cyclic. Any such group will be referred to as a \emph{graph of free groups with cyclic edge groups}. Such groups are realised as the fundamental group of a \textit{graph of graphs with $S^1$ edge spaces}, by which we understand a graph of spaces where every vertex space is a graph and every edge space is a copy of the circle $S^1$.

The notion of a precovering will appear throughout \cref{sec:GofFG,sec:ICE}, so we recall it here. It shows up naturally when completing a compact subspace of a covering space to a finite-sheeted covering.

\begin{defn}
    A map between (the geometric realisations of) graphs of spaces $X'\lrar X$ is a {\it precovering} if it is locally injective, all the maps $X_e'\lrar X_{f(e)}$ and $X_v'\lrar X_{f(v)}$ are covering maps, and all the diagrams
    \[
        \begin{tikzcd}
            X_e' \arrow[r] \arrow[d] & X_{\mathsf{o}(e)}' \arrow[d] \\
            X_{f(e)} \arrow[r]       & X_{f(\mathsf o(e))}         
        \end{tikzcd} \quad \text{and} \quad
        \begin{tikzcd}
            X_e' \arrow[r] \arrow[d] & X_{\mathsf{t}(e)}' \arrow[d] \\
            X_{f(e)} \arrow[r]       & X_{f(\mathsf t(e))}         
        \end{tikzcd}
    \]
    commute. The domain $X'$ is called a {\it precover}.  
\end{defn}

A precovering $X'\lrar X$ is a covering if and only if all the elevations of edge maps of $X$ to $X'$ are edge maps of $X'$. The fact that covering maps induce injections on fundamental groups also applies to precoverings.

\begin{lem}[{\cite[Proposition 2.19]{WiltonHall}}]
    A precovering $X' \lrar X$ induces an injection $\pi_1(X') \lrar \pi_1(X)$.
\end{lem}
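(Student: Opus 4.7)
The plan is to complete the precovering $X' \to X$ to a genuine covering $X'' \to X$ of graphs of spaces by attaching new vertex spaces to the ``free'' elevations of edge maps, and then to factor $\pi_1(X')\to \pi_1(X)$ as an inclusion into $\pi_1(X'')$ followed by the injection coming from the covering $X''\to X$, using \cref{lem:subgraph_inject} for the first factor.

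For the completion step, note that an elevation of an edge map of $X$ to $X'$ fails to be an edge map of $X'$ precisely when one of the two endpoint embeddings $X_e'\times\{0\}$ or $X_e'\times\{1\}$ is not glued to a vertex space of $X'$. For each such free end I would attach a copy of the based cover of the target vertex space $X_{f(\mathsf o(e))}$ or $X_{f(\mathsf t(e))}$ of $X$ corresponding to the image in $\pi_1$ of the elevation, using the elevation itself as the gluing map. Iterating this construction (possibly countably often) produces a graph of spaces $X''$ containing $X'$ as a subcomplex and equipped with a map $X''\to X$. By construction every elevation of every edge map of $X$ to $X''$ now extends to an edge map of $X''$, so $X''\to X$ is a genuine covering of graphs of spaces and in particular induces an injection $\pi_1(X'')\hookrightarrow \pi_1(X)$.

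Next, the inclusion $X'\hookrightarrow X''$ exhibits the graph of groups associated to $X'$ as a subgraph of groups of the one associated to $X''$ in the sense of \cref{def:subgraph}: the vertex and edge spaces of $X'$ are inclusions of (connected) covers of the respective vertex and edge spaces of $X$, and the intersection compatibility $H_e = H_{\mathsf o(e)}\cap G_e$ holds because, by construction, each edge space of $X'$ is a connected component of the preimage of the corresponding edge space of $X$ inside the corresponding vertex space of $X''$. Therefore \cref{lem:subgraph_inject} gives an injection $\pi_1(X')\hookrightarrow \pi_1(X'')$, and composing with the covering map yields the desired injection $\pi_1(X')\hookrightarrow \pi_1(X)$.

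The main technical point is executing the completion cleanly: one needs to track which elevations of edge maps remain unglued, verify that the attached covers glue consistently to give a well-defined graph of spaces, and confirm that the resulting map $X''\to X$ has the path-lifting property of an honest covering. After that, the identification of $X'$ as a subgraph of spaces of $X''$ and the application of \cref{lem:subgraph_inject} are essentially formal.
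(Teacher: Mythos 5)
Your proof is correct and takes the natural route of completing $X' \to X$ to an honest covering $X'' \to X$ by iteratively capping off hanging elevations of edge maps, observing that $X'$ then sits inside $X''$ as a subgraph of spaces so that $\pi_1(X') \hookrightarrow \pi_1(X'')$ by \cref{lem:subgraph_inject}, and composing with the covering-space injection $\pi_1(X'') \hookrightarrow \pi_1(X)$. This is essentially the content of the cited Proposition 2.19 of Wilton, which is likewise proved by completing precovers to covers, so the two arguments agree in substance.

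Two small remarks on the execution. First, your phrasing that an elevation fails to be an edge map when ``one of the two endpoint embeddings $X_e' \times \{0\}$ or $X_e' \times \{1\}$ is not glued'' conflates an elevation (a single map $\tilde X_e \to X'_v$) with a cylinder having two ends; the intended construction is nonetheless clear and correct: for each elevation of $f_{e,\o}$ (resp.\ $f_{e,\t}$) to a vertex space of $X'$ that is not an attaching map of an edge space of $X'$, attach a cylinder $\tilde X_e \times [0,1]$ whose given end is the elevation and whose opposite end is glued to the connected cover of $X_{\t(e)}$ (resp.\ $X_{\o(e)}$) determined by the image of $\pi_1(\tilde X_e)$ under the opposite edge map of $X$. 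The $\pi_1$-injectivity of the edge maps of $X$ guarantees the opposite elevation is full, so the augmented object is again a precover, and iterating (countably many stages suffice) eliminates all hanging elevations. Second, the verification that $X'$ determines a subgraph of groups of $X''$ is even simpler than you suggest: since the vertex and edge spaces of $X'$ are \emph{equal} to the corresponding ones in $X''$, one has $H_e = G_e$ and $H_{\o(e)} = G_{\o(e)}$, so condition (3) of \cref{def:subgraph} degenerates to $G_e \leqslant G_{\o(e)}$, which is just $\pi_1$-injectivity of the edge map.
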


We will also require subgraphs of spaces, which induce subgraphs of groups in the sense of \cref{def:subgraph}.

\begin{defn}\label{def:subgraph_spaces}
    Let $\mathcal X = (X_v, X_e; \Gamma)$ be a graph of spaces. A graph of spaces $\mathcal Y = (Y_v, Y_e; \Upsilon)$ is a \textit{subgraph of spaces} of $\mathcal X$ if
    \begin{enumerate}
        \item there is an injection $\Upsilon \hookrightarrow \Gamma$ (via which we think of $\Upsilon$ as a subgraph of $\Gamma$),
        \item there are $\pi_1$-injective inclusions $f_v \colon Y_v \longhookrightarrow X_v$ and $f_e \colon Y_e \longhookrightarrow X_e$ for all edges and vertices of $\Upsilon$ (via which we think of every $Y_v$ (resp.~$Y_e$) as a subspace of $X_v$ (resp.~$X_e$)),
        \item $\pi_1(Y_e) = \pi_1(Y_{\o(e)}) \cap \pi_1(X_e)$ and $\pi_1(Y_e) = \pi_1(Y_{\t(e)}) \cap \pi_1(X_e)$ for every edge $e \in \E(\Upsilon)$, and
        \item the diagrams
        \[
            \begin{tikzcd}
                Y_e \arrow[d, "f_e", hook] \arrow[r, hook] & Y_{\mathsf o(e)} \arrow[d, "f_{\mathsf o(e)}", hook] \\
                X_e \arrow[r, hook]                        & X_{\mathsf o(e)}
            \end{tikzcd} \quad \text{and} \quad
            \begin{tikzcd}
                Y_e \arrow[d, "f_e", hook] \arrow[r, hook] & Y_{\mathsf t(e)} \arrow[d, "f_{\mathsf t(e)}", hook] \\
                X_e \arrow[r, hook]                        & X_{\mathsf t(e)}
            \end{tikzcd}
        \]
        commute for all $e \in \E(\Upsilon)$, where the horizontal maps are edge maps in the corresponding graphs of spaces.
    \end{enumerate}
\end{defn}

We close by remarking that if $X$ decomposes as a graph of spaces and $Y \lrar X$ is a covering space, then $Y$ inherits a graph of spaces structure where every vertex (resp.~edge) space of $Y$ covers some vertex (resp.~edge) space of $X$.

\subsection{Homology of groups}

Unless stated otherwise, all modules are assumed to be left modules. Let $R$ be a ring. Given a right $R$-module $M$ and a left $R$-module $N$, we can define the abelian group $\Tor_i^R(M, N)$. By definition, $\Tor_0^R(M, N)\cong M\otimes_R N$ as an abelian group. In general, the functors $\Tor_n^R(M,-)$ are the derived functors of $M \otimes_R -$. More concretely, we choose a projective resolution $P_\bullet \lrar N \lrar 0$ and define $\Tor_n^M(M,N) := H_n(M \otimes_R P_\bullet)$.

Let $S$ be another ring. If $M$ is additionally an $(S, R)$-bimodule, then $\Tor_n^R(M, N)$ is naturally a left $S$-module for all $n$. Similarly, if $N$ is an $(R, S)$-bimodule, then $\Tor_n^R(M, N)$ is naturally a right $S$-module. A standard tool we will use is the long exact sequence in $\Tor$ associated to a short exact sequence of modules. Let $0 \lrar N_1 \lrar N_2 \lrar N_3 \lrar 0$ be a short exact sequence of $R$-modules and let $M$ be an $(S, R)$-bimodule. Then there is a long exact sequence of left $S$-modules of the form
\[
    \begin{tikzcd}[row sep=small]
        & \cdots \arrow[r]\arrow[d, phantom, ""{coordinate, name=Z}]& \Tor_{n+1}^R(M,N_3) \arrow[dll,  rounded corners, to path={ -- ([xshift=2ex]\tikztostart.east)|- (Z) [near end]\tikztonodes-| ([xshift=-2ex]\tikztotarget.west)-- (\tikztotarget)}] &  \\
        \Tor_n^R(M,N_1) \arrow[r]& \Tor_n^R(M,N_2) \arrow[r] & \Tor_n^R(M,N_3) \arrow[r] & \cdots.
    \end{tikzcd}
\]
A standard reference for this material is \cite[Chapters 2 and 3]{Weibel_HA}.

Let $G$ be a group and let $M$ be an $R[G]$-module. As in \cite[Chapter III, Section 2]{BrownGroupCohomology}, the $n$-dimensional homology of $G$ with coefficients in $M$ is given by
\[
    H_n(G;M) := \Tor_n^{R[G]}(R,M)
\]
where $R$ denotes the trivial right $R[G]$-module. Chiswell's Mayer--Vietoris exact sequence will be a very useful tools when establishing the $L^2$-Hall property for certain graphs of groups.

\begin{thm}[{\cite[Theorem 2]{ChiswellMV1976}}]\label{thm:MV}
    Let $R$ be a ring, let $\mathcal G$ be a graph of groups with underlying graph $\Gamma$ and $G = \pi_1(\mathcal G)$, and let $M$ be an $R[G]$-module. Then there is a long exact sequence
    \[
        \begin{tikzcd}[sep=small]
            & \cdots \arrow[r]\arrow[d, phantom, ""{coordinate, name=Z}]& H_{n+1}(G;M) \arrow[dll,  rounded corners, to path={ -- ([xshift=2ex]\tikztostart.east)|- (Z) [near end]\tikztonodes-| ([xshift=-2ex]\tikztotarget.west)-- (\tikztotarget)}] &  \\
            \bigoplus_{e \in \E(\Gamma)} H_n(G_e;M) \arrow[r]& \bigoplus_{v \in \V(\Gamma)} H_n(G_v;M) \arrow[r] & H_n(G;M) \arrow[r] & \cdots.
        \end{tikzcd}
    \]
\end{thm}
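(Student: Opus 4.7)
The plan is to derive this Mayer--Vietoris sequence from the action of $G = \pi_1(\mathcal G)$ on its Bass--Serre tree $T$. In the standard construction, the vertices of $T$ are the left cosets $\{gG_v : g \in G,\ v \in \V(\Gamma)\}$, the edges are the cosets $\{gG_e : g \in G,\ e \in \E(\Gamma)\}$, and $G$ acts by left multiplication; this action is cocompact with orbit space $\Gamma$ and is without inversions since the origin and terminus of each edge of $T$ are distinguished. In particular, a canonical lift of $v \in \V(\Gamma)$ (respectively $e \in \E(\Gamma)$) has stabilizer equal to $G_v$ (respectively $G_e$).

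Since $T$ is contractible, its augmented cellular chain complex with coefficients in the trivial $R[G]$-module $R$ is exact, giving a short exact sequence of $R[G]$-modules
\[
    0 \lrar \bigoplus_{e \in \E(\Gamma)} R[G/G_e] \lrar \bigoplus_{v \in \V(\Gamma)} R[G/G_v] \lrar R \lrar 0,
\]
where the direct sums are indexed by the $G$-orbits of edges and vertices of $T$. Applying the long exact sequence for $\Tor_\bullet^{R[G]}(-, M)$, and using that $\Tor$ commutes with direct sums in each variable, yields an exact sequence
\[
    \cdots \lrar \bigoplus_{e \in \E(\Gamma)} \Tor_n^{R[G]}(R[G/G_e], M) \lrar \bigoplus_{v \in \V(\Gamma)} \Tor_n^{R[G]}(R[G/G_v], M) \lrar \Tor_n^{R[G]}(R, M) \lrar \cdots.
\]

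The conclusion then follows by identifying each $\Tor_n^{R[G]}(R[G/H], M)$ with $H_n(H; M)$ for $H \in \{G_v, G_e\}$. This is the standard Shapiro isomorphism: the identity $R[G/H] \cong R[G] \otimes_{R[H]} R$, together with the fact that $R[G]$ is free as a right $R[H]$-module (so that any projective resolution of $R$ over $R[G]$ restricts to a flat resolution over $R[H]$), yields the change-of-rings isomorphism $\Tor_n^{R[G]}(R[G/H], M) \cong \Tor_n^{R[H]}(R, M) = H_n(H; M)$. Substituting into the sequence above produces the statement. The argument is essentially routine once the Bass--Serre tree is invoked; the only point that requires a bit of care is verifying that the $G$-action on $T$ is genuinely without inversions, which is what allows us to identify $C_1(T;R)$ cleanly with $\bigoplus_{e \in \E(\Gamma)} R[G/G_e]$ rather than with a version twisted by a sign character on edges fixed set-wise but reversed by their stabilisers.
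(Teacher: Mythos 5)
Your argument is correct and follows exactly the route the paper has in mind when it invokes Chiswell: let $G$ act on its Bass--Serre tree $T$, use the contractibility of $T$ to obtain the short exact sequence of permutation modules $0 \to \bigoplus_e R[G/G_e] \to \bigoplus_v R[G/G_v] \to R \to 0$ (the same sequence the paper writes out in the proof of \cref{lem:MV}), apply $\Tor_\bullet^{R[G]}(-, M)$, and identify the terms via Shapiro's lemma. The decomposition of $C_*(T;R)$ into orbit modules with stabilisers $G_v$, $G_e$ and the absence of inversions are both built into the Bass--Serre construction, as you note.

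One small slip: your parenthetical justification of the Shapiro isomorphism $\Tor_n^{R[G]}(R[G/H], M) \cong \Tor_n^{R[H]}(R,M)$ is not quite the right argument. Restricting a projective resolution of $R$ over $R[G]$ to $R[H]$ computes $\Tor_n^{R[H]}(R,M)$ directly, but it does not by itself produce the comparison with $\Tor_n^{R[G]}(R[G/H],M)$; that argument would instead prove Shapiro in the \emph{coefficient} variable, i.e.\ $\Tor_n^{R[G]}(R, R[G]\otimes_{R[H]}N)\cong\Tor_n^{R[H]}(R,N)$. For the version you actually need (permutation module in the first slot), the standard proof either takes a projective resolution of $R$ over $R[H]$ and induces it up to a projective resolution of $R[G/H]$ over $R[G]$ (using that $R[G]$ is free, hence flat, over $R[H]$), or takes a projective resolution of $M$ over $R[G]$ and restricts it to $R[H]$. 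The isomorphism you assert is of course true; just the bracketed explanation points at the wrong resolution.
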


Given a field $K$ and a group $G$, denote by $I_G$ the augmentation ideal of the group ring $K[G]$ (in practice, this notation will present no ambiguity as the choice of coefficient field $K$ will be clear from the context). Given a subgroup $H \leq G$, we denote by $I_H^G$ the left $K[G]$-submodule of $I_G$ generated by $I_H$. In addition, even if $H$ is not normal in $G$, we will write $K[G/H]$ to refer to the left $K[G]$-module of left cosets of $H$ in $G$. The following canonical isomorphisms will be useful later. 

\begin{lem}[\cite{Ja23}, Lemma 2.1] \label{augiso} Let $T\leq H\leq G$ be subgroups. Then the following hold.
\begin{enumerate}
    \item The canonical map $K[G]\otimes_{K[H]}I_H\lrar I_H^G$ that sends $a\otimes b$ to $a\cdot b$ for all $a\in K[G]$ and $b\in I_H$ is an isomorphism of left $K[G]$-modules. 
    \item The canonical map $K[G]\otimes_{K[H]} \left(I_H/I_T^H\right)\lrar I_H^G/I_T^G$ that sends $a\otimes (b+I_T^H)$ to $ab+I_T^G$ for all $a\in K[G]$ and $b\in I_H$ is an isomorphism of left $K[G]$-modules.
    \item The kernel of the canonical map of $K[G]$-modules $K[G/T]\lrar K[G/H]$ is naturally isomorphic to $I_H^G/I_T^G$.
\end{enumerate}
\end{lem}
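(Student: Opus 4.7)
The plan is to treat part (a) as the core computation, then deduce (b) and (c) by flat base change and a short diagram chase. The crucial input throughout is that $K[G]$ is a free — and hence flat — right $K[H]$-module, with basis any set of left coset representatives of $H$ in $G$.

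For (a), I would apply $K[G]\otimes_{K[H]}-$ to the augmentation sequence $0 \to I_H \to K[H] \to K \to 0$ of left $K[H]$-modules. Flatness gives a short exact sequence of left $K[G]$-modules
\[
    0 \lrar K[G]\otimes_{K[H]} I_H \lrar K[G]\otimes_{K[H]} K[H] \lrar K[G]\otimes_{K[H]} K \lrar 0.
\]
The canonical identifications $K[G]\otimes_{K[H]} K[H] \cong K[G]$ and $K[G]\otimes_{K[H]} K \cong K[G/H]$ turn the right-hand map into the coset projection $g \mapsto gH$, whose kernel is by definition the left $K[G]$-submodule generated by $I_H$, namely $I_H^G$. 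Under these identifications, the first map is exactly $a\otimes b \mapsto ab$, giving (a).

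For (b), apply $K[G]\otimes_{K[H]}-$ to the short exact sequence $0 \to I_T^H \to I_H \to I_H/I_T^H \to 0$ of left $K[H]$-modules. Part (a) applied to $(G,H)$ identifies the middle term with $I_H^G$; part (a) applied to $(H,T)$ gives $K[H]\otimes_{K[T]} I_T \cong I_T^H$, and combining this with associativity of tensor products and (a) applied to $(G,T)$ yields $K[G]\otimes_{K[H]} I_T^H \cong K[G]\otimes_{K[T]} I_T \cong I_T^G$. These identifications are compatible with the inclusions $I_T^H \hookrightarrow I_H$ and $I_T^G \hookrightarrow I_H^G$, so the exact sequence becomes $0 \to I_T^G \to I_H^G \to K[G]\otimes_{K[H]}(I_H/I_T^H) \to 0$, and chasing the prescribed formula through the identifications gives (b).

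For (c), I would start from the observation that for any subgroup $S \leq G$ the map $K[G] \to K[G/S]$, $g \mapsto gS$, is surjective with kernel $I_S^G$; specialising to $S = T$ and $S = H$ and using $I_T^G \subseteq I_H^G$ gives a commutative diagram of short exact sequences whose induced map on right-hand quotients is the canonical projection $K[G/T] \to K[G/H]$. A snake lemma (or direct diagram chase) identifies its kernel with the cokernel of the left-hand inclusion, which is precisely $I_H^G/I_T^G$. The only substantive input is the flatness of $K[G]$ as a right $K[H]$-module used in (a) and (b); the rest is bookkeeping to verify that all canonical identifications are left $K[G]$-linear and natural in the modules involved, which is where the minor care is required but where no real obstacle arises.
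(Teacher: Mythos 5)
The paper does not prove this lemma itself---it is cited from \cite{Ja23}---so there is no in-text proof to compare against. Your argument is correct and uses the standard approach: freeness (hence flatness) of $K[G]$ over $K[H]$ to tensor up the augmentation sequence for (a), the short exact sequence $0 \to I_T^H \to I_H \to I_H/I_T^H \to 0$ together with associativity of tensor products for (b), and the snake lemma applied to the two rows $0 \to I_S^G \to K[G] \to K[G/S] \to 0$ for $S = T, H$ for (c); all the identifications you invoke are $K[G]$-linear and compatible with the inclusions, so the argument is complete.
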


\subsection{Hughes-free division rings and \texorpdfstring{$L^2$}{L²}-Betti numbers}

Let $G$ be a locally indicable group and let $K$ be a field. An embedding $\varphi \colon K[G] \longhookrightarrow \mathcal D$ of the group algebra $K[G]$ into a division ring $\mathcal D$ is called \textit{Hughes-free} if the following conditions hold.
\begin{enumerate}
    \item The image $\varphi(K[G])$ generates $\mathcal D$ as a division ring.
    \item Let $H \leqslant G$ be a finitely generated subgroup and let $f \colon H \lrar \Z$ be an epimorphism with kernel $N$, and let $t \in H$ map to a generator of $\Z$ under $f$. Let $\mathcal D_N$ denote the division closure of $\varphi(K[N])$. Then $\{ \varphi(t^i) : i \in \Z\} \subseteq \mathcal D$ is linearly independent over $\mathcal D_N$.
\end{enumerate}

By a theorem of Hughes, if a Hughes-free embedding of $K[G]$ exists, then it is unique up to $K[G]$-isomorphism \cite{HughesDivRings1970}. Thus, if $K[G]$ has a Hughes-free embedding, then we denote the division ring by $\Dg$ and think of $K[G]$ as a subset of $\Dg$. We will call $\D_{K[G]}$ the \textit{Hughes-free division ring} of $K[G]$. Note that if $H \leqslant G$ is any subgroup, then the division closure of $K[H]$ in $\D_{K[G]}$ is isomorphic to the Hughes-free division ring $\Dh$. The existence of Hughes-free division rings has been established for many classes of locally indicable groups, and in particular for all locally indicable groups when the ground field $K$ has characteristic zero.

\begin{prop}\label{prop:DKGexistence}
    Let $G$ be locally indicable. A Hughes-free embedding $K[G] \hookrightarrow \D_{K[G]}$ exists if
    \begin{enumerate}
        \item\label{item:char0} the field $K$ is of characteristic zero, or 
        \item\label{item:VCS} if $G$ residually (locally indicable and amenable) or virtually compact special.
    \end{enumerate}
\end{prop}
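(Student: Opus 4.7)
The plan is essentially to appeal to the existing literature, as both parts of \cref{prop:DKGexistence} are known, but it is worth sketching the underlying ideas.

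For part \ref{item:char0}, the main input is the theorem of Jaikin-Zapirain and L\'opez-\'Alvarez asserting that every locally indicable group admits a Hughes-free embedding over any field of characteristic zero. The first step is to reduce to $K = \Q$ (or $\C$): if $\Dg$ exists for $\Q[G]$ then a Hughes-free division ring for $K[G]$ can be built by taking the division closure of $K[G]$ inside an appropriate tensor extension of $\D_{\Q[G]}$, with Hughes-freeness preserved because it only involves $\Q$-linear independence of powers of a chosen element. For $K = \C$, the strategy is to embed $\C[G] \hookrightarrow \U(G)$ and to take $\D$ to be the division closure of $\C[G]$ inside $\U(G)$. The key ingredient, which is the technical heart of the matter, is the strong Atiyah conjecture for locally indicable groups, which guarantees that $\D$ is in fact a division ring (rather than merely a $*$-regular subring). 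Hughes-freeness of this embedding is then verified from compatibility of the $\U(-)$ construction with subgroups together with the $\ltwo$-characterisation of the Hughes-free division ring.

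For part \ref{item:VCS}, I would first treat the case where $G$ is amenable and locally indicable. Here $K[G]$ is a domain (a consequence of local indicability via a Higman/Kropholler--Linnell--Moody type argument), and any amenable domain satisfies the Ore condition by Tamari's theorem, so one may take the classical Ore field of fractions; Hughes-freeness is immediate from the Ore normal form. For $G$ residually (locally indicable and amenable), I would construct $\Dg$ as an inverse/ultra-limit of the Hughes-free division rings attached to the quotients in a residual system, following Gr\"ater and Jaikin-Zapirain, and verify that Hughes-freeness is inherited through the limit. This is the main obstacle in this subcase, as one must carefully match up the linear-independence condition across the system.

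The virtually compact special case is handled independently; the strategy is to exploit the fact (due to Agol and Haglund--Wise combined with Kielak's virtual fibering results) that torsion-free virtually compact special locally indicable groups are virtually \emph{residually torsion-free nilpotent}, hence fall under the scope of the preceding residual argument after a finite-index descent; one then shows that possessing a Hughes-free division ring passes from a finite-index subgroup to the whole group via induction of modules and verification of the linear-independence condition on coset representatives. I expect the hardest bookkeeping to be this last descent step, since Hughes-freeness is not a priori preserved under finite extensions and must be checked by hand using the explicit structure of the overgroup.
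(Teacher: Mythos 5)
The paper's proof of this proposition is a bare citation: \cite[Corollary~1.4]{ZapirainLopezStrongAtiyah2020} for part~(a), and \cite[Corollary~1.3]{JaikinZapirain2020THEUO} together with \cite[Theorem~1.2]{fishersanchez_divrings} for part~(b). Your proposal follows the same overall route, but additionally tries to unpack the internal workings of those references. The broad picture you give is accurate, but two of the sketches contain inaccuracies worth flagging.

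First, in part~(a), the reduction from a general field $K$ of characteristic zero to $\Q$ or $\C$ by ``taking the division closure of $K[G]$ inside an appropriate tensor extension of $\D_{\Q[G]}$'' is not quite right as stated: tensoring a division ring with a field extension over $\Q$ need not yield a division ring (or even a domain), and Hughes-freeness is a $\D_{K[N]}$-linear-independence condition, not a $\Q$-linear one, so it is not automatically inherited by the base change you describe. The cited paper works with subfields of $\C$ and the $\U(G)$-construction directly; general $K$ of characteristic zero is then handled by a separate argument, not by the naive tensor extension. Second, in the virtually compact special case, you invoke Kielak's virtual fibering results to deduce residual torsion-free nilpotence; this is a red herring. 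The correct (and much more elementary) chain is that compact special groups embed into right-angled Artin groups by Haglund--Wise, and RAAGs are residually torsion-free nilpotent by Duchamp--Krob; torsion-free nilpotent groups are amenable and locally indicable, so this puts $G$ (up to finite index) in the scope of your residual argument with no $L^2$-homology or RFRS input at all. The finite-index descent step you correctly identify as the crux; it genuinely does require work, and it is exactly what \cite[Theorem~1.2]{fishersanchez_divrings} supplies.
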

\begin{proof}
    If $K$ is of characteristic zero, then the existence of $\Dg$ is a consequence of the resolution of the Atiyah conjecture for locally indicable groups \cite[Corollary 1.4]{ZapirainLopezStrongAtiyah2020}. If $K$ is of arbitrary characteristic, then a Hughes-free embedding $K[G] \hookrightarrow \Dg$ exists for $G$ residually (locally indicable and amenable) by \cite[Corollary 1.3]{JaikinZapirain2020THEUO} and for $G$ virtually compact special by \cite[Theorem 1.2]{fishersanchez_divrings}. \qedhere
\end{proof}

The groups we will be working with in this article are locally indicable and virtually compact special, so we will always assume that any group algebra $K[G]$ has a Hughes-free division ring $\Dg$.

\begin{lem}
    Let $G$ be a finite graph of finitely generated free groups with cyclic edge groups. If $K$ is a field of characteristic zero, then a Hughes-free embedding $K[G] \hookrightarrow \Dg$ exists. If we assume that $G$ is subgroup separable, then a Hughes-free embedding $K[G] \hookrightarrow \Dg$ exists for arbitrary $K$.
\end{lem}
\begin{proof}
    First note that $G$ is locally indicable, a fact which follows easily from \cite[Theorem 4.2]{Howie_locIndgps}. Thus if $K$ is of characteristic zero, then a Hughes-free embedding $K[G] \hookrightarrow \Dg$ exists by \cref{prop:DKGexistence}(1). If $G$ is subgroup separable, then $G$ is virtually compact special \cite[Corollary 2.3]{MinasyanMineh_QC} and thus a Hughes-free embedding $K[G] \hookrightarrow \Dg$ exists by \cref{prop:DKGexistence}(2). \qedhere
\end{proof}

\begin{rem}
    Let $G$ be a graph of free groups with cyclic edge groups. It is known that $K[G]$ embeds in a division ring by \cite[Theorem 1.3]{fishersanchez_divrings}, but it is not known whether the embedding is Hughes-free. Jaikin-Zapirain conjectures that Hughes-free embeddings of $K[G]$ exist for all locally indicable groups $G$ and all fields $K$ \cite[Conjecture 1]{JaikinZapirain2020THEUO}.
\end{rem}

Hughes-free division rings provide powerful homological invariants. Recall that modules over a division ring are automatically free modules and that they have a well-defined dimension. Thus, if $M$ is a $K[G]$-module, we can define its $\D_{K[G]}$-dimension by 
\[
    \dim_{\D_{K[G]}} M := \dim_{\D_{K[G]}} (\D_{K[G]} \otimes_{K[G]} M)
\]
and more generally $\Dg$-Betti numbers by
\begin{equation} \label{DgBettieq2}
    \beta_n^{K[G]}(M) := \dim_{\D_{K[G]}} \Tor_n^{K[G]}(\D_{K[G]}, M).
\end{equation}
We will not need these $\Dg$-Betti numbers of general $K[G]$-modules until \cref{sec:L2SHNC}.
Note that $\beta_0^{k[G]}(M) = \dim_{\D_{K[G]}} M$. When $K = \C$, we will write $\beta_n^{(2)}(M)$ instead of $\beta_n^{K[G]}(M)$. Setting $K$ to be the trivial $K[G]$-module, we obtain homological numerical invariants of the group $G$:
\begin{equation} \label{DgBettieq}
    b_n^{K[G]}(G) := \beta_n^{K[G]}(K) = \dim_{\D_{K[G]}}  \Tor_n^{K[G]}(\D_{K[G]}, K).
\end{equation}
We will refer to these as the $\D_{K[G]}$-Betti numbers of $G$.

The properties listed in the following proposition will be used throughout the article. We emphasize point (1) below, which states that when $K = \C$, the $\Dg$-Betti numbers coincide with the $L^2$-Betti numbers of $G$.

\begin{prop}\label{prop:HFprops}
    Let $G$ be a  locally indicable group and let $K$ be a field such that a Hughes-free embedding $K[G] \hookrightarrow \D_{K[G]}$ exists.
    \begin{enumerate}[label = (\arabic*)]
        \item\label{item:K=C} If $K= \C$, then $b_n^{K[G]}(G) = \b_n(G)$ for all integers $n\geq 0$.
        \item\label{item:b0} If $G$ is nontrivial, then $b_0^{K[G]}(G) = 0$, otherwise $b_0^{K[G]}(G) = 1$.
        \item\label{item:euler} If $G$ is of finite type, then $\chi(G) = \sum_{i=0}^\infty (-1)^i b_i^{K[G]}(G)$.
        \item\label{item:fibetti} Let $H \leqslant G$ be a subgroup of finite index. Then $\Dh\otimes_{K[H]} K[G] \cong \Dg$ as $(\Dh, K[G])$-bimodules. Consequently,  $b_n^{K[H]}(H) = |G:H| \cdot b_n^{K[G]}(G)$ for all $n$.
        
        \item \label{item:Shapi0} Let $k\geq 0$ be an integer, let $H$ be a subgroup of $G$ and let $N$ be a left $K[H]$-module. There is an isomorphism of left $\D_{K[G]}$-modules of the form
        \[\Tor_k^{K[H]}(\D_{K[G]}, N)\cong \Tor_k^{K[G]}(\D_{K[G]}, K[G]\otimes_{K[H] }N).\]
        In particular,  if $N=K$ is the trivial $K[H]$-module, then 
        \[b_k^{K[H]}(H)=\dim_{\D_{K[G]}} \Tor_k^{K[G]} (\D_{K[G]}, K[G/H]).\]
        \item \label{item:Shapi1} Let $k\geq 0$ be an integer, let   $H$ be a finite-index subgroup of $G$ and let $M$ be a left $K[G]$-module. There is an isomorphism of left $\D_{K[H]}$-modules of the form
        \[\Tor_k^{K[H]}(\D_{K[H]}, M)\cong \Tor_k^{K[G]}(\D_{K[G]}, M).\]
        \item\label{item:freebetti} If $G$ is free on $m \geqslant 1$ generators, then $b_1^{K[G]}(G) = m-1$ and $b_n^{K[G]}(G) = 0$ for all $n \neq 1$. If $G$ is amenable, then $b_n^{K[G]}(G) = 0$ for all $n$.
    \end{enumerate}
\end{prop}

\begin{proof}
    Statement \ref{item:K=C} follows from \cite[Theorem 1.1]{ZapirainLopezStrongAtiyah2020}, while \ref{item:b0} can easily be proven directly from the definitions. To prove \ref{item:euler}, let
    \[
        0 \lrar K[G]^{r_d} \lrar K[G]^{r_{d-1}} \lrar \dots \lrar K[G]^{r_0} \lrar K \lrar 0
    \]
    be a resolution of the trivial \(K[G]\)-module \(K\) by finitely generated free \(K[G]\)-modules. By definition, \(\chi(G) = \sum_{i=0}^d (-1)^i r_i\). After tensoring with \(\Dg\) and omitting the rightmost term, we obtain the chain complex
    \[
        0 \lrar \Dg^{r_d} \lrar \Dg^{r_{d-1}} \lrar \dots \lrar \Dg^{r_0} \lrar 0
    \]
    whose boundary maps we denote by \(\partial_i \colon \Dg^{r_i} \lrar \Dg^{r_{i-1}}\). Since \(\Dg\) is a division ring, the rank-nullity theorem holds, and therefore there is a decomposition
    \[
        \Dg^{r_i} \cong \ker \partial_i \oplus \im \partial_i \cong \Tor_i^{K[G]}(\Dg,K) \oplus \im \partial_{i+1} \oplus \im \partial_i.
    \]
    Since, by definition, \(b_i^{K[G]}(G) = \dim_{\Dg}\Tor_i^{K[G]}(\Dg,K)\), we obtain \(\chi(G) = \sum_{i=0}^\infty (-1)^i b_i^{K[G]}(G)\).
    
    Statement \ref{item:fibetti} is a direct consequence of \cite[Corollary 8.3]{Grater20} (for a detailed proof see \cite[Lemma 6.3]{Fisher2021improved}). The isomorphism of  \ref{item:Shapi0} follows from an standard application of Shapiro's lemma on the second entry  of the $\Tor$ functor. The second equation of \ref{item:Shapi1} follows from setting the trivial $K[H]$-module $N=K$ and from noting that, as left $K[G]$-modules, $K[G]\otimes_{K[H]}K\cong K[G/H]$. Similarly, we apply Shapiro's lemma to the first entry of $\Tor$ to obtain  the isomorphism 
    \[ \Tor_k^{K[H]}(\D_{K[H]}, M)\cong \Tor_k^{K[G]}(\D_{K[H]}\otimes_{K[H]} K[G], M). \]
    Now \ref{item:Shapi1} follows from  \ref{item:fibetti} and the previous isomorphism. 
    Finally, for \ref{item:freebetti}, the claim about free groups can be proved easily using \ref{item:b0} and \ref{item:euler}. The claim about amenable groups follows from \cite[Theorem 3.9(6)]{HennekeKielak2021} (only the case $K = \Q$ is treated there, but the case with $K$ arbitrary has the same proof). \qedhere
\end{proof}

\section{\texorpdfstring{$L^2$}{L²}-independence and the \texorpdfstring{$L^2$}{L²}-Hall property} \label{sec:L2hall}

In this section we discuss the $L^2$-Hall property and the concept of $L^2$-independent subgroups in more detail. We then study various combinatorial situations (in terms of graphs of groups) that provide $L^2$-independent subgroups (which we shall need in the proofs of Theorems \ref{thm:A} and \ref{thm:B}) and show in \cref{overgroups} that the $L^2$-Hall property passes to finite-index overgroups in our setting (as anticipated in \cref{thm:D}). 

\begin{conv}\label{conv:sec3conv}
    In this section, $K$ always denotes a field. Apart from in some isolated examples, all groups appearing are assumed to be locally indicable and we assume that their group algebras over $K$ have Hughes-free embeddings (recall that this is the case when $\mathrm{char}\, K=0$ by \cref{prop:DKGexistence}).
\end{conv}

\subsection{Definitions and basic properties}

The notion of $L^2$-independence and the $L^2$-Hall property were introduced Antolín--Jaikin-Zapirain \cite{Jai22} in connection with proving that surface groups satisfy the Strengthened Hanna Neumann Conjecture. We recall these definitions.

\begin{defn}\label{def:L2indep}
    Let $H$ be a subgroup of $G$. Consider the natural surjection of left $K[G]$-modules $K[G/H]\lrar K$. This induces a natural map 
    \[
        \Tor_1^{K[G]}(\Dg, K[G/H]) \lrar \Tor_1^{K[G]}(\Dg, K).
    \]
    We say that $H$ is {\it $\Dg$-independent} if the map is injective. When $K = \C$, we will say that $H$ is \textit{$L^2$-independent} in $G$. 
\end{defn}

The injectivity of the above map depends on the choice of embedding of $H$ into $G$. For example, the embedding $f\po F(a, b, c)\lrar G= F(x, y, z)$ defined by $f(a)=x^2$, $f(b)=y$ and $f(c)=y^x$ does not lead to an $L^2$-independent subgroup of $G$. For this reason, the following definition will be useful later. 

\begin{defn} \label{def:Dginjective}
    Given a monomorphism $f\po H \longhookrightarrow G$, we will say that $f$ is {\it $\Dg$-injective} if $f(H)$ is $\Dg$-independent in $G$ (or {\it $L^2$-injective} when $K=\C).$
\end{defn}

By \cite[Proposition 4.2]{Jai22}, $H$ is $\Dg$-independent in $G$ if and only if the co-restriction map $H_1(H;\Dg) \lrar H_1(G;\Dg)$ is injective. So \cref{def:L2indep} is the natural generalisation of Antolín--Jaikin-Zapirain's definition of $L^2$-independence  \cite[Section 4]{Jai22} for other division rings $\Dg$. Working in this greater generality will uniformly include various cases of interest while adding no technical difficulty.

The augmentation ideal corresponding to a subgroup captures a lot of structure of the subgroup and hence \cref{prop:L2indep2} provides a useful reformulation of the notion of $\Dg$-independence.

\begin{prop}[{\cite[Corollary 4.3]{Jai22}}]\label{prop:L2indep2} 
    Let $H\leq U\leq G$ be finitely generated subgroups and suppose that $\bG_2(G)=0$. Then $H$ is $\Dg$-independent in $U$ if and only if $\bG_1(I_U^G/I_H^G)=0$. 
\end{prop}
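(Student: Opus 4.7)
The strategy is to extract the equivalence from the long exact sequence of $\Tor_\bullet^{K[G]}(\Dg,-)$ associated to a short exact sequence that realises $I_U^G/I_H^G$ as the kernel of a map of coset modules. By \cref{augiso}(c), there is a short exact sequence of left $K[G]$-modules
\[
0 \to I_U^G/I_H^G \to K[G/H] \to K[G/U] \to 0,
\]
where the right-hand map is the natural surjection induced by the inclusion $H \leq U$. By \cref{def:L2indep} applied to this surjection, $H$ is $\Dg$-independent in $U$ precisely when the induced map $\Tor_1^{K[G]}(\Dg, K[G/H]) \to \Tor_1^{K[G]}(\Dg, K[G/U])$ is injective. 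The relevant four-term segment of the associated long exact sequence is
\[
\Tor_2^{K[G]}(\Dg, K[G/U]) \to \Tor_1^{K[G]}(\Dg, I_U^G/I_H^G) \to \Tor_1^{K[G]}(\Dg, K[G/H]) \to \Tor_1^{K[G]}(\Dg, K[G/U]).
\]
If the leftmost term vanishes, exactness forces the second arrow to be injective, so the rightmost arrow is injective precisely when $\Tor_1^{K[G]}(\Dg, I_U^G/I_H^G) = 0$; and since this $\Tor$ is a module over the division ring $\Dg$, vanishing coincides with the dimension-theoretic statement $\bG_1(I_U^G/I_H^G) = 0$.

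The technical heart of the argument is thus to show $\Tor_2^{K[G]}(\Dg, K[G/U]) = 0$ from the hypothesis $\bG_2(G) = 0$. I would begin with the short exact sequence $0 \to I_U^G \to K[G] \to K[G/U] \to 0$: freeness of $K[G]$ over itself collapses the associated long exact sequence to yield $\Tor_2^{K[G]}(\Dg, K[G/U]) \cong \Tor_1^{K[G]}(\Dg, I_U^G)$. Next, \cref{augiso}(a) identifies $I_U^G$ with $K[G] \otimes_{K[U]} I_U$, and the freeness of $K[G]$ as a right $K[U]$-module lets Shapiro's lemma rewrite this as $\Tor_1^{K[U]}(\Dg, I_U)$, where $\Dg$ is viewed as a right $K[U]$-module via restriction. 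A further application of the short exact sequence $0 \to I_U \to K[U] \to K \to 0$ together with projectivity of $K[U]$ reduces the problem to $\Tor_2^{K[U]}(\Dg, K)$. Finally, because $\Dg$ is flat over the division ring $\mathcal{D}_{K[U]}$, a flat base-change identification yields
\[
\Tor_2^{K[U]}(\Dg, K) \cong \Dg \otimes_{\mathcal{D}_{K[U]}} \Tor_2^{K[U]}(\mathcal{D}_{K[U]}, K),
\]
and the $\Dg$-dimension of the right-hand side equals $b^{K[U]}_2(U)$.

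The main obstacle, and the place where the hypothesis on $G$ must be leveraged carefully, is deducing $b^{K[U]}_2(U) = 0$ from $\bG_2(G) = 0$. For $U$ of finite index in $G$ this is immediate from \cref{prop:HFprops}(4), but for infinite-index $U$ it is not automatic in general. In all applications relevant to this paper (subgroups of graphs of free groups with cyclic edge groups, limit groups, and related classes) the vanishing of the second $\Dg$-Betti number propagates to every finitely generated subgroup, for instance via Chiswell's Mayer--Vietoris sequence (\cref{thm:MV}) combined with the freeness of vertex groups and the cyclicity of edge groups. With this ingredient in place, the long exact sequence argument above closes to give the desired equivalence.
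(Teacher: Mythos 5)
Your approach is the standard one, and the reduction is essentially correct. Realising $I_U^G/I_H^G$ as $\ker\bigl(K[G/H]\lrar K[G/U]\bigr)$ via \cref{augiso}(c), taking the long exact sequence in $\Tor_\bullet^{K[G]}(\Dg,-)$, and identifying $\Dg$-independence of $H$ in $U$ with injectivity of $\Tor_1^{K[G]}(\Dg,K[G/H])\lrar\Tor_1^{K[G]}(\Dg,K[G/U])$ is exactly the right move, and your chain of Shapiro's lemma and flat base change along $\D_{K[U]}\subseteq\Dg$ correctly shows that the missing ingredient is $\Tor_2^{K[G]}(\Dg,K[G/U])=0$, equivalently $b_2^{K[U]}(U)=0$. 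Note that the $\Leftarrow$ direction of the proposition needs no hypothesis at all from the exact sequence; the $\Tor_2$ vanishing is used only for $\Rightarrow$.

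You are also right to be suspicious of your final step, and the hand-wave there is a genuine gap rather than a harmless one. The condition $b_2^{K[U]}(U)=0$ does \emph{not} follow from $\bG_2(G)=0$ for an arbitrary finitely generated subgroup $U$ of a locally indicable $G$: taking $K=\C$, $G=(F_2\times F_2)\times\Z$, $U=F_2\times F_2$, and $H$ any infinite cyclic subgroup of $U$, one has $b_2^{(2)}(G)=0$ by the K\"unneth formula, $H$ trivially $L^2$-independent in $U$ (since $H_1(H;\Dg)=0$), and yet $\beta_1^{K[G]}(I_U^G/I_H^G)\cong\Tor_2^{K[G]}(\Dg,K[G/U])$ has $\Dg$-dimension $b_2^{(2)}(U)=1$. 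So the hypothesis one actually needs for the $\Rightarrow$ direction is $\beta_2^{K[G]}(K[G/U])=0$ (equivalently $b_2^{K[U]}(U)=0$), which coincides with $\bG_2(G)=0$ precisely when $U=G$. This is harmless for the paper, because the proposition is only ever invoked with $U$ equal to the ambient group (in \cref{sandwich} and in the proof of \cref{overgroups}\ref{item:overgroupJZ} after rebasing to $G_1$); but your appeal to ``the applications relevant to this paper'' is not a proof. You should instead state the extra hypothesis explicitly, either as $b_2^{K[U]}(U)=0$ directly, or as $\cd_K(G)\leq 2$ together with $\bG_2(G)=0$ so that \cref{lem:topvanish} supplies the vanishing for every subgroup.
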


\begin{defn}\label{def:L2Hall}
    We say that a group $G$ is {\it $\Dg$-Hall} or has the {\it $\Dg$-Hall property} if for every finitely generated subgroup $H\leqslant G$ there exists a finite-index subgroup $G_1\leqslant G$ such that $H$ is $\Dg$-independent in $G_1$. If $K = \C$, we say that $G$ is \textit{$L^2$-Hall} or has the \textit{$L^2$-Hall property}.
\end{defn}

\begin{rem}
    Note that the $L^2$-Hall property can be defined for all groups, while the $\Dg$-Hall property only makes sense for locally indicable groups for which a Hughes-free embedding $K[G] \hookrightarrow \Dg$ exists. Indeed, if $H \leqslant G$ and $\mathcal U(G)$ is the algebra of affiliated operators of $G$, then we say that $H$ is \textit{$L^2$-independent} in $G$ if 
    \begin{equation}\label{eq:L2_hall}
        \dim_{\mathcal U(G)} \ker\left(H_1(H; \mathcal U(G)) \lrar H_1(G; \mathcal U(G))\right) = 0
    \end{equation}
    and that $G$ has the \textit{$L^2$-Hall property} if every finitely generated subgroup of $G$ is $L^2$-independent in a finite-index subgroup of $G$. These definitions agree with \cref{def:L2indep,def:L2Hall} by \cite[Lemma 4.1]{Jai22}. We mention this because we will discuss the $L^2$-Hall property for some non locally indicable groups later in this section. On the other hand, an advantage of working with the $\Dg$-Hall property is that the condition that $H_1(H;\Dg) \lrar H_1(G;\Dg)$ be injective is somewhat less awkward than the condition in \ref{eq:L2_hall}.
\end{rem}

The following hereditary feature of the $L^2$-Hall property will be useful later. Recall that a ring homomorphism \(R \lrar S\) is \emph{(right) faithfully flat} if for every morphism \(M \lrar N\) of (right) \(R\)-modules, \(M \lrar N\) is injective if and only if \(M \otimes_R S \lrar N \otimes_R S\) is injective. There is the corresponding concept of left faithful flatness, which is defined analogously. If \(\mathcal D_1 \lrar \mathcal D_2\) is a morphism of division rings, then it is necessarily injective and (left and right) faithfully flat. Indeed, consider a morphism of \(\mathcal D_1\)-modules \(M \lrar N\). Since \(\mathcal D_1\) is a division ring, \(\mathcal D_2 \cong \oplus_I \mathcal D_1\) for some index set \(I\). From the commutative diagram
\[
    \begin{tikzcd}
        M \otimes_{\mathcal D_1} \mathcal D_2 \arrow[r] \arrow[d, "\cong", no head] & N \otimes_{\mathcal D_1} \mathcal D_2 \arrow[d, "\cong", no head] \\
        \bigoplus_I M \arrow[r] & \bigoplus_I N \nospacepunct{,}
    \end{tikzcd}
\]
it follows at once that \(M \lrar N\) is injective if and only if \(M \otimes_{\mathcal D_1} \mathcal D_2 \lrar N \otimes_{\mathcal D_1} \mathcal D_2\) is.

\begin{lem}\label{lem:l2subgp}
    The $\Dg$-Hall property passes to subgroups.
\end{lem}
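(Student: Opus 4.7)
My plan is the following: given a finitely generated $L \leq H$, apply the $\Dg$-Hall hypothesis to $L$ viewed as a finitely generated subgroup of the ambient group $G$, intersect the resulting finite-index subgroup of $G$ with $H$, and then descend from $\Dg$-coefficients to $\Dh$-coefficients by a flat base-change argument.

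Concretely, by the $\Dg$-Hall property of $G$, there exists a finite-index subgroup $G_1 \leq G$ with $L \leq G_1$ such that the corestriction $H_1(L;\Dg) \to H_1(G_1;\Dg)$ is injective. I then set $H_1 := H \cap G_1$, which is a finite-index subgroup of $H$ containing $L$. Since $L \leq H_1 \leq G_1$, the above injective map factors through $H_1(H_1;\Dg)$, which forces $H_1(L;\Dg) \to H_1(H_1;\Dg)$ to be injective as well.

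To finish, I use that $\Dh$ sits inside $\Dg$ as the division closure of $K[H]$ (as recorded in the preliminaries of Section 2.3), so $\Dg$ is a free, and in particular faithfully flat, left $\Dh$-module. A standard flat base-change argument then gives
\[
    H_n(A;\Dg) \;\cong\; \Dg \otimes_{\Dh} H_n(A;\Dh)
\]
for every subgroup $A \leq H$, so that the $\Dg$-coefficient map $H_1(L;\Dg) \to H_1(H_1;\Dg)$ is obtained from $H_1(L;\Dh) \to H_1(H_1;\Dh)$ by tensoring with $\Dg$ over $\Dh$. Faithful flatness transfers the injectivity back to the $\Dh$-coefficient map, which is precisely the assertion that $L$ is $\Dh$-independent in $H_1$. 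The main point to verify is the faithful flatness of $\Dg$ over $\Dh$, which is automatic once one knows that $\Dh$ is a sub-division ring of $\Dg$, so I do not expect any real obstacle.
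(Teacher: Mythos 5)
Your proof is correct and follows essentially the same route as the paper's: apply the $\Dg$-Hall hypothesis to $L$ inside $G$, observe that the resulting injection $H_1(L;\Dg) \to H_1(G_1;\Dg)$ factors through $H_1(H \cap G_1;\Dg)$, and then use faithful flatness of the division ring extension $\Dh \subseteq \Dg$ to descend to $\Dh$-coefficients. The only cosmetic difference is that the paper phrases the base change using $\D_{K[G_0]}$ and $\D_{K[G_0\cap H]}$ in place of $\Dg$ and $\Dh$, which are isomorphic in each case since the relevant subgroups have finite index.
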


\begin{proof}
    Let $G$ be a $\Dg$-Hall group and let $H \leqslant G$ be a subgroup. Let $U \leqslant H$ be a finitely generated subgroup. Then there is a subgroup $G_0 \leqslant G$ of finite index such that the horizontal map in the diagram 
    \[
        \begin{tikzcd}[row sep = small, column sep = tiny]
            H_1(U;\D_{K[G_0]}) \arrow[rr] \arrow[rd] & & H_1(G_0;\D_{K[G_0]}) \\
            & H_1(G_0 \cap H;\D_{K[G_0]}) \arrow[ru] &
        \end{tikzcd}
    \]
    is injective. But then $H_1(U;\D_{K[G_0]}) \lrar H_1(G_0 \cap H; \D_{K[G_0]})$ is injective. Since extensions of division rings are faithfully flat, the commutative diagram 
    \[
        \begin{tikzcd}[column sep = small]
            H_1(U; \D_{K[G_0]}) \arrow[d, "\cong", no head] \arrow[r]    & H_1(G_0 \cap H; \D_{K[G_0]}) \arrow[d, "\cong", no head] \\
            \mathcal \D_{K[G_0]} \underset{\D_{K[G_0 \cap H]}}{\otimes} H_1(U; \D_{K[G_0 \cap H]})  \arrow[r] &  \mathcal \D_{K[G_0]} \underset{\D_{K[G_0 \cap H]}}{\otimes} H_1(G_0 \cap H; \D_{K[G_0 \cap H]})     
        \end{tikzcd}
    \]
    implies that $H_1(U; \D_{k[G_0 \cap H]}) \longrightarrow H_1(G_0 \cap H; \D_{k[G_0 \cap H]})$ is injective. Hence, $H$ has the $L^2$-Hall property. \qedhere
\end{proof}

We now collect various instances where we understand $L^2$-independent subgraphs of groups. The first of such examples is \cref{lem:MV} and will be useful when establishing the $L^2$-Hall property for graphs of free groups with cyclic edge groups.

\begin{lem} \label{lem:MV}
    Let $\mathcal Y$ be a subgraph of groups of $\mathcal Z$, and let $Z := \pi_1(\mathcal Z)$. If
    \begin{enumerate}
        \item the maps $Y_v\lrar Z_v$ are $\D_{K[Z]}$-injective for all $v\in \V(\Ga^\mathcal Y)$,
        \item $b_1^{\D_{K[Z]}}(Z_e) = 0$ for all $e\in \E(\Ga^\mathcal Z)$, and
        \item the groups $Y_e$ and $Z_e$ are isomorphic for all $e\in \E(\Ga^\mathcal Y)$
    \end{enumerate}
    then the canonical injection $\pi_1(\mathcal Y) \lrar \pi_1(\mathcal Z)$ is $\D_{K[Z]}$-injective. 
\end{lem}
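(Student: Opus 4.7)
The plan is to deduce the statement from Chiswell's Mayer--Vietoris long exact sequence (\cref{thm:MV}) applied to both $\mathcal Y$ and $\mathcal Z$ with coefficients in the $(K[Z], K[Z])$-bimodule $\D_{K[Z]}$, viewed in the case of $\mathcal Y$ as a $K[Y_v]$- or $K[Y_e]$-module via the inclusions into $K[Z]$. The inclusion $\mathcal Y \hookrightarrow \mathcal Z$ (in the sense of \cref{def:subgraph}) and the inclusion of underlying graphs $\Upsilon \hookrightarrow \Gamma$ together induce a commutative ladder of long exact sequences, and the goal of showing that $H_1(Y; \D_{K[Z]}) \lrar H_1(Z; \D_{K[Z]})$ is injective will follow from a standard diagram chase.

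Concretely, I will focus on the segment
\[
\begin{tikzcd}[column sep = small, row sep = small]
\bigoplus_{e} H_1(Y_e;\D_{K[Z]}) \arrow[r] \arrow[d] & \bigoplus_{v} H_1(Y_v;\D_{K[Z]}) \arrow[r] \arrow[d, "\alpha"] & H_1(Y;\D_{K[Z]}) \arrow[r] \arrow[d, "\phi"] & \bigoplus_{e} H_0(Y_e;\D_{K[Z]}) \arrow[d, "\beta"] \\
\bigoplus_{e} H_1(Z_e;\D_{K[Z]}) \arrow[r] & \bigoplus_{v} H_1(Z_v;\D_{K[Z]}) \arrow[r] & H_1(Z;\D_{K[Z]}) \arrow[r] & \bigoplus_{e} H_0(Z_e;\D_{K[Z]})
\end{tikzcd}
\]
where the top direct sums run over $\V(\Upsilon)$ and $\E(\Upsilon)$ and the bottom ones over $\V(\Gamma)$ and $\E(\Gamma)$. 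By condition (2), each $H_1(Z_e; \D_{K[Z]})$ has $\D_{K[Z]}$-dimension zero, hence is the zero module (modules over a division ring are free), so the bottom left term vanishes and the map $\bigoplus_v H_1(Z_v;\D_{K[Z]}) \to H_1(Z;\D_{K[Z]})$ is injective. Condition (3), combined with (2), will yield the analogous vanishing $H_1(Y_e;\D_{K[Z]}) = 0$ for every $e \in \E(\Upsilon)$, making the top row injective at the $H_1$ stage as well.

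Condition (1) says precisely that each component $H_1(Y_v;\D_{K[Z]}) \to H_1(Z_v;\D_{K[Z]})$ of $\alpha$ is injective, and since $\Upsilon \hookrightarrow \Gamma$ is an embedding of graphs, $\alpha$ is then injective as a direct sum. Condition (3), interpreted so that the inclusion $Y_e \hookrightarrow Z_e$ induces an isomorphism on $H_0(-;\D_{K[Z]})$, identifies $\beta$ with the inclusion of a direct summand, which is injective. A standard four-term diagram chase now concludes the argument: if $\phi(x) = 0$, commutativity of the rightmost square and injectivity of $\beta$ force the image of $x$ in $\bigoplus_e H_0(Y_e;\D_{K[Z]})$ to vanish, so by exactness $x$ lifts to some $y \in \bigoplus_v H_1(Y_v;\D_{K[Z]})$; then $\alpha(y)$ maps to $\phi(x) = 0$ and, by injectivity at the $H_1$-vertex stage on the bottom, $\alpha(y) = 0$, whence $y = 0$ and $x = 0$.

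The main obstacle I anticipate is the correct interpretation of condition (3): I need both $H_1(Y_e;\D_{K[Z]}) = 0$ and the injectivity of $\beta$ to drop out of it. The cleanest route is to argue that the inclusion $Y_e \hookrightarrow Z_e$ coming from the subgraph-of-groups structure is in fact an isomorphism (which is automatic when the edge groups are Hopfian, as is the case for the cyclic edge groups that appear in our applications), so that $\D_{K[Z]}$ is a module over the \emph{same} group algebra $K[Y_e] = K[Z_e]$ on both sides and the $H_0$-map $\beta$ becomes the identity summand-by-summand. Once that subtlety is settled, the rest of the proof is an elementary ladder-chase.
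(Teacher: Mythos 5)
Your proof is correct and is essentially the paper's own: both extract a commutative ladder from the naturality of Chiswell's Mayer--Vietoris sequence and conclude by (a chase equivalent to) the Four Lemma, using conditions (1)--(3) to control the relevant vertical maps. The ``main obstacle'' you flag at the end is not actually an obstacle, and Hopficity is not the right notion (e.g.\ $2\Z \hookrightarrow \Z$ is not an isomorphism although $\Z$ is Hopfian -- you would want co-Hopficity, which $\Z$ lacks): what you actually need is only that $\beta$ be \emph{injective}, and this holds automatically since $H_0(Y_e;\D_{K[Z]})$ and $H_1(Y_e;\D_{K[Z]})$ both vanish whenever $Y_e$ is abstractly isomorphic to a nontrivial $Z_e$ with $b_1^{\D_{K[Z]}}(Z_e)=0$ (Betti numbers depend only on the isomorphism type of the group, not on the particular embedding), while if $Y_e = Z_e = 1$ then $\beta$ restricts to the identity on that summand.
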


\begin{proof}
    We view $\pi_1(\mathcal Y)$ as a subgroup of $Z$ via the canonical inclusion. The subgraph of groups $\mathcal Y$ of $\mathcal Z$ induces a map of exact sequences
    \[
        \begin{tikzcd}
            0 \arrow[r] & {\underset{e \in \E(\Gamma^\mathcal Y)}{\bigoplus} K[Y/Y_v]} \arrow[r] \arrow[d] & {\underset{v \in \V(\Gamma^\mathcal Y)}{\bigoplus} K[Y/Y_v]} \arrow[r] \arrow[d] & K \arrow[r] \arrow[d] & 0 \\
            0 \arrow[r] & {\underset{e \in \E(\Gamma^\mathcal Z)}{\bigoplus} K[Z/Z_e]} \arrow[r]           & {\underset{v \in \E(\Gamma^\mathcal Z)}{\bigoplus} K[Z/Z_v]} \arrow[r]           & K \arrow[r]           & 0 \nospacepunct{.}
        \end{tikzcd}
    \]
    Since Chiswell's Mayer--Vietoris exact sequence is induced by applying a Tor functor to the short exact sequences of the above form (see the proof of \cite[Theorem 2]{ChiswellMV1976}), the long exact sequences are automatically natural and thus we obtain maps between Chiswell's exact sequences for $\mathcal Y$ and $\mathcal Z$:
    \[
        \begin{tikzcd}[column sep = small]
            \underset{e \in \E(\Gamma^\mathcal Y)}{\bigoplus} H_1(Y_e) \arrow[r] \arrow[d, "\cong"] & \underset{v \in \V(\Gamma^\mathcal Y)}{\bigoplus} H_1(Y_v) \arrow[r] \arrow[d, hook] & H_1(\pi_1(\mathcal Y)) \arrow[d] \arrow[r] & \underset{ve \in \E(\Gamma^\mathcal Y)}{\bigoplus} H_0(Y_e) \arrow[d, "\cong"] \\
            \underset{e \in \E(\Gamma^\mathcal Z)}{\bigoplus} H_1(Z_e) \arrow[r]                    & \underset{v \in \V(\Gamma^\mathcal Z)}{\bigoplus} H_1(Z_v) \arrow[r]                 & H_1(Z) \arrow[r]                           & \underset{e \in \E(\Gamma^\mathcal Z)}{\bigoplus} H_0(Z_e)   \nospacepunct{,}         
        \end{tikzcd}
    \]
    where $H_i(-)$ stands for $H_i(-;\mathcal D_{K[Z]})$ (for $i = 0,1$). By the Four Lemma, the map $H_1(\pi_1(\mathcal Y)) \lrar H_1(Z)$ is injective. \qedhere
\end{proof}

Our following technical proposition will be crucial to establish the $L^2$-Hall property for limit groups in \cref{sec:ICE}. We also consider it to be of potential interest for proving that relatively hyperbolic groups with a finite abelian hierarchy have the $L^2$-Hall property (\cref{conj:specialyHypSHNC}). 

\begin{prop} \label{prop:monster} 
    Let  $\mathcal W$ be a subgraph of groups of $\mathcal Z$ that have the same underlying graph $\Ga$ and all of whose edge groups are infinite cyclic. Let $G = \pi_1(\mathcal Z)$ and suppose that there is a bipartite structure $\V(\Ga) = \V_\o \sqcup \V_\t$ of $\Ga$ so that no two different edges of $\E(\Ga)$ have the same endpoints. We moreover assume that the orientation on $\Gamma$ is such that $\o(e)\in \V_{\o}$ and $\t(e)\in \V_{\t}$ for all $e\in \E(\Ga)$. We denote by $z_{e}$ a generator of the infinite cyclic group $Z_e$. Let $T$ be a spanning tree of $\Gamma$. Fix a presentation of $G$ (as described in \cref{defn:tree_pi1}) and, for every $e\in \E(\Ga)\smallsetminus \E(T)$, denote by $t_e$ the formal letter associated to $e$.  For all $v\in \V_{\o}$, we consider   finite subsets $\mathcal L_v^{(0)}\subset\mathcal L_v\subseteq Z_v$   such that
    \[
       \mathcal{L}_v \setminus \mathcal{L}_v^{(0)}\subseteq \bigcup_{\o(e)=v} \phi_{\o, e}(z_e) \subseteq W_v\cup \left(\mathcal{L}_v \setminus \mathcal{L}_v^{(0)}\right).
    \]
     Suppose that, for all $v\in \V_{\o}$,  the natural map 
    \[
        W_v*\left(\coprod_{\mathcal L_v} \Z \right)\lrar Z_v
    \]
    is injective and $\Dg$-injective. We fix a subset $  \E(T)\subseteq E_T\subseteq \E(\Ga)$ such that $\phi_{e, \o}(z_e)\in \mathcal L_{\o(e)}\setminus \mathcal L^{(0)}_{\o(e)}$ for all $e\in \E(\Gamma)\setminus E_T$. If we name $ \mathcal L^{(0)}=\bigcup_{v\in \V(\Ga)} \mathcal L_v^{(0)}$ and $\mathcal L^{(t)}=\{t_e:  e\in \E(\Gamma)\setminus E_T\}$, then  the natural map
    \begin{equation} \label{eq:W}
        \pi_1(\mathcal W)* \left(\coprod_{\mathcal L^{(0)}\bigcup \mathcal L^{(t)}} \Z\right)\lrar \pi_1 (\mathcal Z)
     \end{equation}
    is injective and $\Dg$-injective (in the sense of \cref{def:Dginjective}). 
\end{prop}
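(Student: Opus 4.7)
The plan is to realise the left-hand side of \eqref{eq:W} as the fundamental group of an auxiliary graph of groups $\mathcal{Y}$ that appears as a subgraph of groups of $\mathcal{Z}$, and then invoke \cref{lem:subgraph_inject} for injectivity and \cref{lem:MV} for $\Dg$-injectivity.

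I construct $\mathcal{Y}$ on the same underlying graph $\Gamma$, with vertex groups $Y_v = W_v$ for $v \in \V_\t$ and $Y_v = W_v * \coprod_{\ell \in \mathcal{L}_v^{(0)}} \langle \ell \rangle$ for $v \in \V_\o$. Each $Y_v$ is a free factor of $W_v * \coprod_{\mathcal{L}_v} \mathbb{Z}$ and thereby embeds $\Dg$-injectively into $Z_v$, since free-factor inclusions are $\Dg$-injective via Mayer--Vietoris applied to the free product. The containment $\bigcup_{\o(e) = v} \phi_{\o, e}(z_e) \subseteq W_v \cup (\mathcal{L}_v \setminus \mathcal{L}_v^{(0)})$ partitions the edges of $\Gamma$ into two types: \emph{Type $W$} edges, with $\phi_{e,\o}(z_e) \in W_{\o(e)}$ (for which the subgraph-of-groups condition forces $W_e = Z_e$), where I set $Y_e = Z_e$ with edge maps inherited from $\mathcal{Z}$; and \emph{Type $L$} edges, with $\phi_{e,\o}(z_e) \in \mathcal{L}_{\o(e)} \setminus \mathcal{L}_{\o(e)}^{(0)}$ (for which $W_e = \{1\}$, and which includes every edge of $\mathcal{E}(\Gamma) \setminus E_T$), where I set $Y_e = \{1\}$ with trivial edge maps. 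A case-by-case check confirms the intersection conditions of \cref{def:subgraph}, so $\mathcal{Y}$ is indeed a subgraph of groups of $\mathcal{Z}$.

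Reading off the presentation of $\pi_1(\mathcal{Y})$ relative to $T$ identifies it with $\pi_1(\mathcal{W}) * \coprod_{\mathcal{L}^{(0)} \cup \mathcal{L}^{(t)}} \mathbb{Z}$: the $\mathcal{L}^{(0)}$-factors come from the added free summands at origin vertex groups; the Type $L$ non-spanning-tree edges, which coincide with $\mathcal{E}(\Gamma) \setminus E_T$ since $\mathcal{E}(T) \subseteq E_T$, contribute the free stable letters $\mathcal{L}^{(t)}$; and the rest of the presentation matches $\pi_1(\mathcal{W})$. Injectivity of \eqref{eq:W} now follows from \cref{lem:subgraph_inject}. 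For $\Dg$-injectivity, I apply \cref{lem:MV} in two stages: first to the sub-subgraph of groups $\mathcal{Y}_W \subseteq \mathcal{Y}$ consisting only of Type $W$ edges, for which all three conditions of \cref{lem:MV} hold (condition~(i) as discussed, condition~(ii) because $Z_e \cong \mathbb{Z}$ is amenable by \cref{prop:HFprops}\ref{item:freebetti}, and condition~(iii) by construction), yielding $\Dg$-injectivity of $\pi_1(\mathcal{Y}_W) \hookrightarrow \pi_1(\mathcal{Z})$; then extending across the appended free factors $\coprod_{\mathcal{L}^{(t)}} \mathbb{Z}$, which requires analysing the HNN decomposition of $\pi_1(\mathcal{Z})$ over $\pi_1(\mathcal{Z}|_{E_T})$ to verify that each stable letter $t_e$ with $e \notin E_T$ contributes an independent $\Dg$-homology class. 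This last extension step, together with the careful identification of $\pi_1(\mathcal{Y})$ with the prescribed free product in the presence of possible Type $L$ spanning-tree edges, constitutes the main obstacle.
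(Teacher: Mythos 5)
You correctly identify the two things your argument lacks; here is why they matter and how the paper handles them. Your auxiliary $\mathcal{Y}$ has trivial edge groups on the Type~$L$ edges, so condition~(3) of \cref{lem:MV} ($Y_e \cong Z_e$) fails on those edges, and restricting to the Type~$W$ sub-subgraph $\mathcal{Y}_W$ can yield a \emph{disconnected} underlying graph whenever a Type~$L$ edge lies in the spanning tree~$T$, which is not a valid graph of groups. The paper instead makes the auxiliary object generous enough for \cref{lem:MV} to apply in one stroke: its $\mathcal{W}^{(4)}$ has $W_v^{(4)} = W_v * \coprod_{\mathcal{L}_v} \Z$ at $\V_\o$-vertices (the \emph{entire} $\mathcal{L}_v$, not only $\mathcal{L}_v^{(0)}$), $W_v^{(4)} = Z_v$ at $\V_\t$-vertices, and $W_e^{(4)} = Z_e$ for \emph{every} edge. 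With this enlargement every edge map $\phi_{e,\o}$ lands inside $W_{\o(e)}^{(4)}$, so all three hypotheses of \cref{lem:MV} hold on the full connected graph $\Gamma$, giving $\Dg$-injectivity of $\pi_1(\mathcal{W}^{(4)}) \hookrightarrow \pi_1(\mathcal{Z})$ directly.

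The step you flagged as the ``main obstacle'' — producing the $\coprod_{\mathcal{L}^{(t)}} \Z$ factors — is resolved by the algebraic observation in \cref{eq:W3} of the paper's proof: restoring the edge group of an edge $e \notin E_T$ from $\{1\}$ to $Z_e$ while \emph{simultaneously} adjoining $\ell_e := \phi_{e,\o}(z_e)$ to the origin vertex group as a fresh free-factor generator has the net effect of freeing the stable letter $t_e$, since the new edge relation $t_e \ell_e t_e\inv = \phi_{e,\t}(z_e)$ Tietze-eliminates $\ell_e$ and leaves $t_e$ with no constraint. Thus the fundamental group gains exactly a free factor $\lan t_e \ran \cong \Z$, which is where each $\Z$ of $\coprod_{\mathcal{L}^{(t)}}\Z$ comes from. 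The paper organises this as a chain $\mathcal{W} \leq \mathcal{W}^{(1)} \leq \cdots \leq \mathcal{W}^{(4)} \leq \mathcal{Z}$ whose middle steps are isomorphisms onto free factors and whose first and last steps are applications of \cref{lem:MV}. Finally, a smaller point: your assertion that the Type~$L$ non-spanning-tree edges coincide with $\E(\Gamma) \setminus E_T$ is not a consequence of the hypotheses. The condition on $E_T$ guarantees that every $e \notin E_T$ is Type~$L$, but there may also be Type~$L$ edges inside $E_T \setminus \E(T)$; the paper's partition $\mathcal{L}_v \setminus \mathcal{L}_v^{(0)} = \mathcal{L}_v^{(1)} \sqcup \mathcal{L}_v^{(2)}$ exists precisely to bookkeep these two kinds of Type~$L$ edges separately.
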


\begin{proof} 
    Recall from \cref{lem:subgraph_inject} that, given a subgraph of groups $\mathcal H$ of $\mathcal G$, the canonical map $\pi_1(\mathcal H)\lrar \pi_1(\mathcal G)$   injective. We will consider several intermediate graphs of groups $\mathcal W \leq \mathcal W^{(1)} \leq \mathcal W^{(2)} \leq \mathcal W^{(3)} \leq \mathcal W^{(4)} \leq \mathcal Z$ to prove the claim. We will only specify their vertex and edge groups, and the corresponding edge maps will be assumed to be the restrictions of the edge maps of $\mathcal Z$. 
    
    The graph of groups $\mathcal W^{(1)}$ is defined as follows. For all $v\in \V_{\t}$, $W_v^{(1)}=Z_v$. For all $v\in \V_{\t}$ and $e\in \E(\Ga)$, $W_v^{(1)}=W_v$ and $W_e^{(1)}=W_e$. By \cref{lem:MV}, the canonical map \begin{equation}  \label{eqW0} \pi_1(\mathcal W)\lrar \pi_1(\mathcal W^{(1)})\end{equation} is  $\Dg$-injective. 

    We split $\mathcal L_v\setminus \mathcal L_v^{(0)}$ as a disjoint union of $\mathcal L_v^{(1)}$ and $\mathcal L_v^{(2)}$, where $\mathcal L_v^{(1)}$ consists exactly of the elements $\phi_{\o, e}(z_e)\in \mathcal L_v$ such that $e\in  E_T$. Consider another intermediate graph of groups $\mathcal W^{(1)} \leq \mathcal W^{(2)} \leq \mathcal Z$ defined as follows:
    \begin{itemize}
        \item $W_v^{(2)}=W_v^{(1)}*\left(\coprod_{\mathcal{L}_v^{(1)}} \Z\right)$ for $v\in \V_{\o}$;
        \item $W_v^{(2)}=W_v^{(1)}$ for $v\in \V_{\t}$;
        \item $W_e^{(2)}=Z_e$ for $e\in   E_T$;
        \item $W_e^{(2)}=W_e^{(1)}$ for $e\in \E(\Ga)\smallsetminus E_T$.
    \end{itemize} 
    Letting $E^{(1)}=\{t_e : e\in E_T\smallsetminus E(T), \phi_{e, \o}(z_e)\in \mathcal L_v^{(1)}\}$, the canonical map 
    \begin{equation}\label{eq:W1}
        \pi_1(\mathcal W^{(1)})*\left( \coprod_{E^{(1)}} \Z \right) \lrar \pi_1(\mathcal W^{(2)})
    \end{equation}
    is an isomorphism, so $\pi_1(\mathcal W^{(1)})\lrar \pi_1(\mathcal W^{(2)})$ is $\Dg$-injective. 
    
    We define $\mathcal W^{(2)} \leq \mathcal W^{(3)}\leq \mathcal Z$ as follows:
    \begin{itemize}
        \item $W_v^{(3)}=W_v^{(2)}*\left(\coprod_{\mathcal{L}_v^{(0)}} \Z\right)$ for $v\in \V_{\o}$;
        \item $W_v^{(3)}=W_v^{(2)}$ for $v\in \V_{\t}$;
        \item $W_e^{(3)}=W_v^{(2)}$ for $e\in   E_T$;
        \item $W_e^{(3)}= W_v^{(2)}$ for $e\in \E(\Ga)\smallsetminus E_T$.  
    \end{itemize} 
    It is immediate to see  from the presentation of $\pi_1(\mathcal W^{(3)})$ that the canonical map 
    \begin{equation}\label{eq:W2}
        \pi_1(\mathcal W^{(2)})*\left(\coprod_{\mathcal L^{(0)}} \Z\right)\lrar \pi_1(\mathcal W^{(3)})
    \end{equation}
    is an isomorphism.  Finally, we define $\mathcal W^{(3)} \leq \mathcal W^{(4)} \leq \mathcal Z$ as follows:
    \begin{itemize} 
        \item $W_v^{(4)}=W_v^{(3)}*\left(\coprod_{\mathcal{L}_v^{(2)}} \Z\right)$ for $v\in \V_{\o}$;
        \item $W_v^{(4)}=W_v^{(3)}$ for $v\in \V_{\t}$; 
        \item $W_e^{(4)}=Z_e$ for $e\in \E(\Ga)$. 
    \end{itemize} 
    We observe that 
    \begin{equation}\label{eq:W3}
        \pi_1(\mathcal W^{(3)})*\left(\coprod_{\mathcal L^{(t)}} \Z\right)\lrar \pi_1(\mathcal W^{(4)})
    \end{equation}
    is an isomorphism. Observe that $ \mathcal W^{(4)} \leq \mathcal Z$ admits the following description: 
    \begin{itemize} 
        \item $W_v^{(4)}=W_v *\left(\coprod_{\mathcal{L}_v} \Z\right)$ for $v\in \V_{\o}$;
        \item $W_v^{(4)}=Z_v$ for $v\in \V_{\t}$; 
        \item $W_e^{(4)}=Z_e$ for $e\in \E(\Ga)$. 
    \end{itemize}  By our assumption on (\ref{eq:W}) and by \cref{lem:MV}, the canonical map 
    \begin{equation}\label{eq:W4}
        \pi_1(\mathcal W^{(4)})\lrar \pi_1(\mathcal Z)
    \end{equation}
    is $\Dg$-injective. From the chain of injections and $\Dg$-injections described in \eqref{eqW0}, \eqref{eq:W1}, \eqref{eq:W2}, \eqref{eq:W3} and \eqref{eq:W4}; we conclude that the canonical map
    \[
        \pi_1(\mathcal W)* \left(\coprod_{\mathcal L^{(0)} \bigcup \mathcal L^{(t)}} \Z\right)\lrar \pi_1 (\mathcal Z)
    \]
    is injective and $\Dg$-injective. The proof is complete.
\end{proof}

\subsection{Examples}

We are already in a position to establish the $\Dg$-Hall property for some classes of groups.

\begin{ex}[Amenable groups]
    Let $G$ be a group with the property that $\bG_1(H) = 0$ for all subgroups $H \leqslant G$. Then $G$ is trivially $\Dg$-Hall. Since amenable groups have vanishing $L^2$-Betti numbers above degree $0$ and amenability passes to subgroups, this shows that amenable groups are $L^2$-Hall. If $G$ is amenable and $K[G]$ is a domain (which is the case for us, since we are assuming \cref{conv:sec3conv}), then the same reasoning shows that $G$ is $\Dg$-Hall.

    There are also non-amenable groups which are $L^2$-Hall for the reason discussed above. As an example, let $T$ be a Tarski monster of prime order $p$ and let $G = T \times \Z$. Since all the proper subgroups of $T$ are isomorphic to $\Z/p$, it follows that every finitely generated subgroup $H$ of $G$ has $\b_1(H) = 0$ and therefore $G$ is $L^2$-Hall. However, $T$ is non-amenable, and therefore so is $G$. Note that $G$ is not locally indicable (or even torsion-free) and therefore $\Dg$ does not exist. However, it still makes sense to discuss the $L^2$-Hall property for this group since $L^2$-invariants are defined for all groups.
\end{ex}

\begin{ex}[Free groups] \label{freeHall}
    Let $F$ be a finitely generated free group and let $H \leqslant F$ be a finitely generated subgroup. A classical theorem of Marshall Hall \cite{Hal49} states that $H$ is a free factor in some finite-index subgroup $F' \leqslant F$. By \cref{lem:MV}, $H$ is $\mathcal{D}_{K[F]}$-independent in $F'$, showing that $F$ is $\mathcal{D}_{K[F]}$-Hall.
\end{ex}

Fundamental groups of closed surfaces also satisfy an analogous principle to Hall's theorem, namely that finitely generated subgroups are virtual retracts, as proved by Scott \cite{Scott_surfLerf} using hyperbolic geometry (see also \cite{WiltonPrehall} for a more combinatorial proof). This directly implies that surface groups are subgroup separable. Moreover, Antolín--Jaikin-Zapirain proved that they are $L^2$-Hall in \cite[Theorem 4.4]{Jai22}  using these virtual retractions combined with other algebraic ideas (such as the theory of Demushkin groups and the cohomological goodness of surface groups). We now use Scott's argument to give a more topological proof of the $L^2$-Hall property for surface groups.  

\begin{prop}\label{prop:surfaceHall}
    Surface groups satisfy the $\Dg$-Hall property.
\end{prop}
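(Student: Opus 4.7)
The plan is to give a topological proof combining Scott's subsurface realisation theorem \cite{Scott_surfLerf} with \cref{lem:MV}. Let $\Sigma$ be a closed surface with $G = \pi_1(\Sigma)$. If $\chi(\Sigma) \geqslant 0$, i.e.\ if $\Sigma$ is the sphere, the real projective plane, the torus, or the Klein bottle, then $G$ is virtually abelian and in particular amenable, so \cref{prop:HFprops}\ref{item:freebetti} gives $b_1^{K[H]}(H) = 0$ for every subgroup $H \leqslant G$; hence $G$ is trivially $\D_{K[G]}$-Hall.

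Assume henceforth that $\chi(\Sigma) < 0$ and let $H \leqslant G$ be finitely generated. By Scott's theorem there is a finite-index subgroup $G_1 \leqslant G$ containing $H$ such that, in the finite-sheeted cover $\Sigma_1 \to \Sigma$ corresponding to $G_1$, the subgroup $H$ is realised as the fundamental group of an embedded compact connected subsurface $\Sigma_H \subset \Sigma_1$. Cutting $\Sigma_1$ along the boundary curves of $\Sigma_H$ yields a graph of spaces decomposition of $\Sigma_1$ in which the vertex spaces are $\Sigma_H$ together with the connected components of $\overline{\Sigma_1 \setminus \Sigma_H}$ (each a compact surface with non-empty boundary, and hence with free $\pi_1$), and the edge spaces are circles. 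Passing to fundamental groups, we obtain a graph of groups $\mathcal Z$ with $\pi_1(\mathcal Z) = G_1$ in which $H$ is a vertex group, every other vertex group is finitely generated free, and every edge group is infinite cyclic.

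Next, let $\mathcal Y$ be the subgraph of groups of $\mathcal Z$ consisting of the single vertex carrying $H$ and no edges (a valid subgraph in the sense of \cref{def:subgraph}, since the edge conditions are vacuous). Then the hypotheses of \cref{lem:MV} are readily verified: the only vertex inclusion $H = Y_v \hookrightarrow Z_v = H$ is the identity and hence trivially $\D_{K[G_1]}$-injective; each edge group $Z_e \cong \Z$ of $\mathcal Z$ is amenable, so $b_1^{K[G_1]}(Z_e) = 0$ by \cref{prop:HFprops}\ref{item:freebetti}; and condition (3) is vacuous. We conclude that $H = \pi_1(\mathcal Y) \hookrightarrow \pi_1(\mathcal Z) = G_1$ is $\D_{K[G_1]}$-injective, i.e., $H$ is $\D_{K[G_1]}$-independent in $G_1$, and therefore $G$ is $\D_{K[G]}$-Hall.

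The essential input is Scott's theorem, which upgrades the abstract algebraic datum of a finitely generated subgroup to the concrete topological datum of an embedded compact subsurface of a finite cover; once this geometric realisation is available, the graph-of-groups structure is automatic and \cref{lem:MV} closes the argument after a trivial verification of hypotheses. The only subtlety is recognising that the single-vertex, edgeless subgraph of groups meets the requirements of \cref{lem:MV}, which is immediate from the definitions.
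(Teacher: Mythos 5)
Your proof is correct and follows essentially the same route as the paper: realise $H$ via Scott's theorem as (the $\pi_1$ of) a compact incompressible subsurface of a finite-sheeted cover, read off the induced cyclic splitting of that cover, and then invoke \cref{lem:MV} with the single-vertex, edgeless subgraph of groups carrying $H$. Two small presentational differences: you dispatch the $\chi(\Sigma)\geqslant 0$ cases by amenability (the paper handles $\chi=0$ implicitly along with $\chi<0$), and the paper spells out the incompressibility of $\Sigma_c$ and of the complementary pieces to justify that the decomposition is genuinely a graph of spaces with $\pi_1$-injective edge maps, a point you leave implicit in the phrase ``embedded compact connected subsurface.'' Neither affects correctness.
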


\begin{rem}
    The only surface that has a fundamental group with torsion is $\R P^2$, where $G = \pi_1(\R P^2) \cong \Z/2$. In this case, we may \textit{define} $b_0^{K[G]}(G) = \frac{1}{2}$ and $\bG_n(G) = 0$ for all $n \geqslant 1$, which is consistent with the index scaling formula \cref{prop:HFprops}\ref{item:fibetti}. In this sense, $\Z/2$ also has the $\Dg$-Hall property (and in fact so do all finite groups).
\end{rem}

\begin{proof}[Proof of \cref{prop:surfaceHall}]
    We say that a compact connected subsurface $X$ of a connected surface $S$ is {\it incompressible} if no component of the closure of the complement $S\smallsetminus X$ is a disc. If $\pi_1(X) \neq 1$, then $X$ is incompressible if and only if the induced map $\pi_1(X)\lrar \pi_1(S)$ is injective.
    
    Let $G$ be the fundamental group of a closed connected surface $\Sigma$ with $\chi(\Sigma)\leq 0$ (the case when $\chi(\Sigma)>0$ is trivial). Let $H \leqslant G$ be a non-trivial finitely generated subgroup. Let $\Sigma' \lrar \Sigma$ be the covering space corresponding to $H$. Then $\Sigma'$ is a (possibly non-compact) surface with fundamental group $H$. Let $\Sigma_c $ be a compact core for $\Sigma'$, that is,   $\Sigma_c \subseteq \Sigma'$ is a compact, connected, incompressible subsurface such that the natural map $\pi_1(\Sigma_c)\lrar \pi_1(\Sigma')$ is an isomorphism. The existence of $\Sigma_c$ is ensured by \cite[Lemma 1.5]{Scott_surfLerf}. Scott also showed in \cite[Lemma 1.4 and Theorem 3.3]{Scott_surfLerf} that there is a commutative diagram
    \[
        \begin{tikzcd}[column sep = tiny]
            & \widehat \Sigma \arrow[rd] &        \\
            \Sigma_c \arrow[ru, hook] \arrow[rr] &   & \Sigma
        \end{tikzcd}
    \]
    where $\widehat \Sigma \lrar \Sigma$ is an intermediate finite-sheeted covering into which $\Sigma_c$ projects homeomorphically.  Since $ \pi_1 (\Sigma_c)\cong H\neq 1$, the boundary  $\partial \Sigma_c$ is incompressible in $  \Sigma_c$. Consequently, $\Sigma_c$ is an incompressible subsurface of $\widehat \Sigma$ and every connected component $\widehat \Sigma_i$ of the closure of the complement  $\widehat\Sigma \smallsetminus \Sigma_c$ has the property that its boundary is incompressible. It follows that $\widehat \Sigma$ admits a decomposition as a finite graph of spaces where the vertex spaces are $\{\widehat \Sigma_i, \Sigma_c\}$, various of which are glued along some of their boundary components (so the edge spaces are circles and the edge maps are $\pi_1$-injective).  This produces a  splitting for the fundamental group  $\pi_1 (\widehat \Sigma)$ where one vertex is $\pi_1(\Sigma_c)$,  the other vertices are $\pi_1(\widehat \Sigma_i)$ and the edge groups are infinite cyclic.  By \cref{lem:MV}, the group $H=\pi_1(\Sigma_c)$ is $\Dg$-independent in $\pi_1(\widehat \Sigma)$, and therefore $G$ is $\Dg$-Hall. 
\end{proof}

The ideas of \cref{prop:surfaceHall}, such as the construction of a compact core for a subgroup $H$ and the reconstruction of $H$ from cyclic splittings,  motivates the strategy that we follow in \cref{thm:GOFGWCEG} for more general graphs of free groups with cyclic edges. We can now explain the simpler case when the edge groups are trivial (i.e. the case of free products), which generalises the proof that free groups are $\Dg$-Hall.

\begin{prop}\label{prop:L2freeProd}
    The class of finitely generated subgroup separable $\Dg$-Hall groups is closed under free products.
\end{prop}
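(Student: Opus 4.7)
The plan is to mimic the Marshall Hall argument for free groups (\cref{freeHall}), but in the two-vertex-type setting of a free product. Let $G = A * B$ with $A$ and $B$ finitely generated, subgroup separable, and $\Dg$-Hall, and let $H$ be a finitely generated subgroup of $G$. Acting on the Bass--Serre tree of $A * B$ (which has trivial edge stabilisers), $H$ admits by Kurosh a finite graph of groups decomposition $\mathcal Y$ with trivial edge groups and finitely generated vertex groups $H_v$, each contained in some conjugate $g_v X_v g_v\inv$ with $X_v \in \{A, B\}$.

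For every vertex of $\mathcal Y$, apply the $\D_{K[X_v]}$-Hall property of $X_v$ to the conjugate $g_v\inv H_v g_v \leq X_v$. This produces a finite-index $X_v^* \leq X_v$ containing $g_v\inv H_v g_v$ and making it $\D_{K[X_v]}$-independent in $X_v^*$. Since Hughes-free division rings of subgroups embed canonically into those of ambient groups, $\D_{K[X_v]}$ sits inside $\Dg$, and $\Dg$ is flat over $\D_{K[X_v]}$ (it being a division ring); flatness then upgrades $\D_{K[X_v]}$-independence to $\Dg$-independence, and conjugation by $g_v$ shows that $H_v$ is $\Dg$-independent in $Z_v := g_v X_v^* g_v\inv$.

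Next, realise these $Z_v$ as the vertex groups of a graph of groups associated to a finite-index subgroup of $G$. Using the graph of spaces $X := X_A \vee X_B$ with $X_A = K(A,1)$ and $X_B = K(B,1)$ as a model for $G$, build a precovering $\widehat X_c \lrar X$ whose vertex spaces at the vertices of $\mathcal Y$ are the finite covers of $X_A$ or $X_B$ with fundamental group $X_v^*$, connected along the edges of $\mathcal Y$. Complete $\widehat X_c$ to an honest finite-sheeted covering $\widehat X \lrar X$ by appending trivial vertex spaces (copies of $X_A$ or $X_B$, with vertex group $A$ or $B$) to balance the total numbers of preimages of the wedge point on the $A$- and $B$-sides, and pairing any still-unmatched leaves by extra edges. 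Then $G_1 := \pi_1(\widehat X)$ has finite index in $G$ and contains $H$, and inherits a graph of groups $\mathcal Z$ with trivial edge groups in which $\mathcal Y$ embeds as a subgraph of groups. The vertex inclusions $H_v \hookrightarrow Z_v$ are $\Dg$-injective by the previous step, and both edge-theoretic hypotheses of \cref{lem:MV} are automatic since all edge groups are trivial. Hence \cref{lem:MV} gives that $H$ is $\Dg$-independent in $G_1$, confirming the $\Dg$-Hall property; closure of subgroup separability under free products is classical.

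The main obstacle is the combinatorial completion of the precover to a genuine finite cover of $X$ without altering the prescribed vertex groups over $\mathcal Y$. The triviality of the edge groups, a feature of the free-product structure, is precisely what affords the flexibility to attach trivial vertex covers of $X_A$ and $X_B$ and to match up the leftover leaves.
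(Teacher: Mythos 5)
Your plan has the right skeleton and is close to the paper's argument: pass to the finite core graph of groups of $H$ with trivial edge groups, apply the $\Dg$-Hall property of the factors at each vertex, try to realise the resulting enlarged graph of groups inside a finite-index subgroup of $G$ via a precover, and invoke \cref{lem:MV}. The faithful-flatness argument for upgrading $\D_{K[X_v]}$-independence to $\Dg$-independence is also sound.

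However, there is a genuine gap at the step where you claim to ``build a precovering $\widehat X_c \lrar X$ \ldots connected along the edges of $\mathcal Y$.'' When you replace the vertex space of $\mathcal Y$ (which covers $X_A$ or $X_B$ with fundamental group $H_v$) by the smaller finite cover with fundamental group $X_v^*$, the edge attachment points of $\mathcal Y$ must be pushed forward along the intermediate covering map. Two distinct attachment points $H_v a_1 \neq H_v a_2$ collide exactly when $a_1 a_2^{-1} \in X_v^*$, and nothing in the $\Dg$-Hall property prevents this. If a collision occurs, two edges of $\mathcal Y$ attach at the same preimage of the wedge point while both map to the single edge of $X$, so $\widehat X_c \lrar X$ is not locally injective and $\widehat X_c$ is not a precover; in particular $\pi_1(\widehat X_c)$ need not embed in $G$ and the completion to a finite-sheeted cover fails. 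This is precisely where the paper uses subgroup separability of the factors $A$ and $B$ (not merely of $A*B$): for each vertex one passes to a further intermediate subgroup $X_v^{**}$ with $H_v \leq X_v^{**} \leq X_v^*$ chosen so that the covering onto $X_{X_v^{**}}$ is injective on the finitely many edge attachment points; since the corestriction $H_1(H_v;\Dg) \lrar H_1(X_v^*;\Dg)$ factors through $H_1(X_v^{**};\Dg)$, $H_v$ remains $\Dg$-independent in $X_v^{**}$. You invoke subgroup separability only for the classical closure fact at the end, but it is actually indispensable in the construction itself, and you should insert this separation step before passing to \cref{lem:MV}.
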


\begin{proof}
    Let $A$ and $B$ be finitely generated subgroup separable groups with the $\Dg$-Hall property. Let $X_A$ and $X_B$ be classifying spaces for $A$ and $B$, respectively, and let $X$ be the space obtained from $X_A$, $X_B$, and an edge $I = [0,1]$ by gluing the point $0 \in I$ to a basepoint in $X_A$ and the point $1 \in I$ to a basepoint in $X_B$. Then $X$ is a classifying space for $A * B$, and has a natural graph of spaces structure, where the underlying graph is an edge.

    Let $H \leqslant A * B$ be a finitely generated subgroup and let $Y \lrar X$ be the covering space corresponding to $H$. Let $Z$ be a finite core for $Y$, i.e.~$Z \longhookrightarrow Y$ induces a $\pi_1$-isomorphism, the underlying graph of $Z$ is finite, and $Z_v = Y_v$ for all vertices $v$ in the underlying graph of $Z$. Denote the fundamental groups of the $A$-vertices (i.e.~those vertex spaces in $Z$ covering $X_A$) by $X_{A_i}$, where $\pi_1(X_{A_i}) = A_i \leqslant A$. Similarly, denote the $B$-vertices by $X_{B_j}$, where $\pi_1(X_{B_j}) = B_j \leqslant B$. For each $i$ (resp.~$j$), let $A_i' \leqslant A$ (resp.~$B_j' \leqslant B$) be a finite-index subgroup containing $A_i$ (resp.~$B_j$) such that $A_i \longhookrightarrow A_i'$ (resp.~$B_j \longhookrightarrow B_j'$) is $\Dg$-injective (recall \cref{def:Dginjective}).

    Let $X_{A_i} \lrar X_{A_i'}$ be the covering map associated to $A_i \leqslant A_i'$ and let $P_i \subseteq X_{A_i}$ be the set of points that are the endpoints of edges $Z$. By subgroup separability of $A$, we may find a finite-index subgroup $A_i''$ such that $A_i \leqslant A_i'' \leqslant A_i'$ and such that the induced covering map $X_{A_i} \lrar X_{A_i''}$ is injective on $P_i$. Note that $A_i$ is still $\Dg$-injective in $A_i''$. A similar discussion applies to the $B$-vertices, where we obtain new groups $B_j''$ and spaces $X_{B_j''}$ satisfying the analogous conditions.

    Let $\overline Z$ be the following graph of spaces: it has the same underlying as $Z$, the vertex spaces $X_{A_i}$ (resp.~$B_j$) are replaced with $X_{A_i''}$ (resp.~$X_{B_j''}$), and there is an edge joining the points $x \in X_{A_i''}$ and $y \in X_{B_j''}$ if and only if they are the images of points $x'$ and $y'$ under the coverings $X_{A_i} \lrar X_{A_i''}$ and $X_{B_j} \lrar X_{B_j''}$, respectively, and $x'$ and $y'$ were joined by an edge in $Z$. From the construction, the covering spaces of the vertices induce a map of graphs of spaces $Z \lrar \overline Z$ (which is an isomorphism on underlying graphs). Then $\pi_1(Z) \longhookrightarrow \pi_1(\overline Z)$ is $\Dg$-injective by \cref{lem:MV}.

    The process of completing $\overline Z$ to a finite-sheeted cover $\widehat Z$ of $X$ is standard. This is detailed, for instance, in \cite[Theorem 3.2]{WiltonPrehall}. For this, one adds various disjoint copies of the vertices $X_A$ and $X_B$ to the precover $\overline Z$ until the resulting space satisfies {\it Stallings' principle} (see \cite[Proposition 3.1]{WiltonPrehall}). Then, certain pairs of the hanging elevations of edge maps can be glued together along additional trivial edge spaces to produce the finite-sheeted cover $\overline Z \lrar X$. As before, the inclusion $\overline Z \longhookrightarrow \widehat Z$  induces a $\Dg$-injection on fundamental groups, which proves the claim. \qedhere
\end{proof}
It is natural to ask whether subgroup separability is needed in \cref{prop:L2freeProd}, but it is unclear to the authors if, for instance, the free product of finitely generated and residually finite $L^2$-Hall groups is  $L^2$-Hall. For non-residually finite groups we notice the following.
\begin{rem} 
    The $L^2$-Hall property is not closed under free products in general. Let $A$ be an infinite, simple, amenable group (finitely generated examples of such groups exist by \cite{JuschenckoMonod_simpleAmenable}). Then $A$ has the $L^2$-Hall property but $A*A$ does not. To see this, let $F \leqslant A * A$ be a free subgroup of rank $d(F) > 2$. Then $\b_1(F)>1=\b_1(A*A)$ and hence $F$ is not $L^2$-independent in $A*A$. Moreover, $A$ is simple and therefore $A*A$ has no nontrivial finite-index subgroups. We conclude that $A*A$ does not have the  $L^2$-Hall property.
\end{rem}

 We conclude with some non-examples.

\begin{ex} 
    Fundamental groups of hyperbolic $3$-manifolds and (nonabelian free)-by-cyclic groups are examples of groups $G$ with $\bG_1(G) = 0$ that contain nonabelian free subgroups. Consequently, they are not $\Dg$-Hall. For a similar reason, non-solvable generalised Baumslag--Solitar groups are not $\Dg$-Hall.
\end{ex}

\subsection{Passing to finite-index overgroups}

In this subsection we prove \cref{overgroups}. This will be crucial when establishing the $L^2$-Hall property for graphs of free groups with cyclic edge groups. \cref{thm:D} from the introduction will follow from \Cref{cor:overgroups} and \Cref{lem:topvanish}.  

\begin{thm} \label{overgroups} 
    Let $G$ be a finitely generated and suppose that $G_1 \leq G$ is a finite-index subgroup and that $H\leq G$ is a finitely generated subgroup such that   $\bH_2(H)=0$. Then the following hold.
    \begin{enumerate}[label=(\arabic*)]
        \item\label{item:overgroupJZ} If $H$ is $\Dg$-independent in $G$, then $H\cap G_1$ is $\Dg$-independent in $G_1.$
        \item\label{item:overgroup} If there exists a finite-index subgroup $H_0\leq H$ such that $H_0$ is $\Dg$-independent in $G_1$, then there exists a finite-index subgroup $G_0\leq G$ containing $H$ as a $\Dg$-independent subgroup.
    \end{enumerate}
\end{thm}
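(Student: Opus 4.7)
The plan is to deduce both parts via long exact sequences in $\Tor^{K[G]}(\Dg,-)$, exploiting that $\bH_2(H)=0$ implies $H_2(H;\Dg)=0$ (by flat base-change along the inclusion of division rings $\Dh\hookrightarrow\Dg$).

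For part (1), consider the commutative square of corestrictions
\[
\begin{tikzcd}
H_1(H\cap G_1;\Dg) \arrow[r,"\alpha"] \arrow[d,"\beta"'] & H_1(G_1;\Dg) \arrow[d,"\gamma"] \\
H_1(H;\Dg) \arrow[r,"\delta",hook] & H_1(G;\Dg)
\end{tikzcd}
\]
induced by the obvious inclusions, where $\delta$ is injective by hypothesis. Since $\gamma\alpha=\delta\beta$, a diagram chase gives $\ker\alpha\subseteq\ker\beta$. The long exact sequence associated to $0\to I_H/I_{H\cap G_1}^H\to K[H/(H\cap G_1)]\to K\to 0$ combined with $H_2(H;\Dg)=0$ and Shapiro's lemma identifies $\ker\beta$ with $\Tor_1^{K[G]}(\Dg,\,I_H^G/I_{H\cap G_1}^G)$ (via \cref{augiso}). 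After replacing $G$ by the finite-index subgroup $\langle H,G_1\rangle$---in which $H$ remains $\Dg$-independent and $I_H^G+I_{G_1}^G=I_G$---the short exact sequence
\[
0\to\tfrac{I_H^G\cap I_{G_1}^G}{I_{H\cap G_1}^G}\to\tfrac{I_H^G}{I_{H\cap G_1}^G}\oplus\tfrac{I_{G_1}^G}{I_{H\cap G_1}^G}\to\tfrac{I_G}{I_{H\cap G_1}^G}\to 0
\]
and its associated long exact sequence in $\Tor^{K[G]}(\Dg,-)$, together with the identification (from the long exact sequence for $0\to I_G/I_{H\cap G_1}^G\to K[G/(H\cap G_1)]\to K\to 0$ and $\delta$ injective) of $\ker\beta$ as the image of $\Tor_1^{K[G]}(\Dg,\,I_G/I_{H\cap G_1}^G)$ in $H_1(H\cap G_1;\Dg)$, force $\alpha|_{\ker\beta}=0$, hence $\ker\alpha=0$.

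For part (2), I would construct $G_0$ explicitly. First replace $G_1$ by its finite-index normal core in $G$; part (1) applied to the inclusion $H_0\leq G_1$---using the index formula in \cref{prop:HFprops} to see that $b_2^{K[H_0]}(H_0)=[H:H_0]\cdot\bH_2(H)=0$---shows that the hypothesis persists. Assume then $G_1\triangleleft G$ and set $G_0:=HG_1$, a finite-index subgroup of $G$ containing $H$ with $G_1\triangleleft G_0$ and $F:=G_0/G_1\cong H/(H\cap G_1)$ finite. The Lyndon--Hochschild--Serre spectral sequences for $G_1\triangleleft G_0$ and $H\cap G_1\triangleleft H$ with coefficients in $\Dg$ collapse at $E_2$ (because $H_0(N;\Dg)=0$ for any nontrivial subgroup $N\leq G$), yielding natural isomorphisms $H_1(G_0;\Dg)\cong H_1(G_1;\Dg)_F$ and $H_1(H;\Dg)\cong H_1(H\cap G_1;\Dg)_F$, under which the corestriction $\gamma':H_1(H;\Dg)\to H_1(G_0;\Dg)$ becomes the $F$-coinvariants map of the corestriction $\alpha'':H_1(H\cap G_1;\Dg)\to H_1(G_1;\Dg)$.

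The hypothesis $H_1(H_0;\Dg)\hookrightarrow H_1(G_1;\Dg)$, combined with part (1) applied inside $G_1$ to control the kernels of the relevant corestrictions, yields injectivity of $\alpha''$. The principal remaining obstacle is showing that the $F$-coinvariants functor preserves this injectivity: when $\mathrm{char}\,K\nmid|F|$ (in particular for $K=\C$, where the $L^2$-Hall property lives) this is immediate by $K[F]$-semisimplicity, and in general it requires a careful analysis of the $K[F]$-module structure on $H_1(G_1;\Dg)$ arising from the decomposition $\Dg\cong\D_{K[G_1]}^{[G:G_1]}$ as a $(\D_{K[G_1]},K[G_1])$-bimodule.
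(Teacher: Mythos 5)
Your proposal takes a genuinely different route from the paper's proof, but both halves contain gaps.

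\textbf{Part (1).} The paper argues by first proving $I_{H\cap G_1}^{G_1}=I_{G_1}\cap I_H^G$, which gives an injection $I_{G_1}/I_{H\cap G_1}^{G_1}\hookrightarrow I_G/I_H^G$, and then observing that the cokernel decomposes over double cosets as $\bigoplus_{t\neq 1}K[G_1/(H^t\cap G_1)]$; the vanishing $\bH_2(H)=0$ kills the $\Tor_2^{K[G_1]}$ of each summand, and Shapiro's lemma plus the hypothesis kills $\Tor_1^{K[G_1]}(\D_{K[G_1]},I_G/I_H^G)$. You replace this with a diagram chase and a Mayer--Vietoris sequence for the ideals $I_H^G,I_{G_1}^G\subseteq I_G$ (valid after replacing $G$ by $\langle H,G_1\rangle$). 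However, your concluding step ``force $\alpha|_{\ker\beta}=0$, hence $\ker\alpha=0$'' is a non sequitur: $\alpha|_{\ker\beta}=0$ means $\ker\beta\subseteq\ker\alpha$, which together with your earlier $\ker\alpha\subseteq\ker\beta$ only gives $\ker\alpha=\ker\beta$. And $\ker\beta\neq 0$ in general --- already when $H$ is free of rank $2$ and $H\cap G_1$ has index $2$ (so is free of rank $3$), the corestriction $\beta\po H_1(H\cap G_1;\Dg)\cong\Dg^2\to H_1(H;\Dg)\cong\Dg$ cannot be injective. So in fact your asserted equality $\alpha|_{\ker\beta}=0$ is provably false, not merely insufficient. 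If you intended ``$\alpha$ is injective on $\ker\beta$'' (which would close the argument since $\ker\alpha\subseteq\ker\beta$), then the burden is to derive that from the ideal Mayer--Vietoris sequence, and that derivation is absent.

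\textbf{Part (2).} Your LHS-spectral-sequence identification $H_1(G_0;\Dg)\cong H_1(G_1;\Dg)_F$ is correct and elegant (since $H_0(N;\Dg)=0$ for nontrivial $N$ forces $E^2_{p,0}=0$). But your strategy then hinges on the $F$-coinvariants functor preserving the injectivity of $\alpha''$, which you yourself concede requires $\mathrm{char}\,K\nmid|F|$. The theorem is stated for an arbitrary ground field $K$ for which $\Dg$ exists, and the paper's proof does work uniformly: it uses the sandwich lemma (\cref{sandwich}) to pass from $\Dg$-independence of $H_0$ to that of $H\cap G_1$, and then repeats the double-coset argument of part (1) with the one-element transversal $T=\{1\}$ in $G_0=G_1H$, rather than taking coinvariants. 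Relatedly, your sentence about ``part (1) applied inside $G_1$ to control the kernels of the relevant corestrictions'' elides precisely the step that is filled by the sandwich lemma: from $H_1(H_0;\Dg)\hookrightarrow H_1(G_1;\Dg)$ you get that $\ker\alpha''$ misses the image of $H_1(H_0;\Dg)$, but not that $\ker\alpha''=0$. The key observation there --- that $I_{H\cap G_1}^{G_1}/I_{H_0}^{G_1}$ is finite-dimensional over $K$ and hence has vanishing $\Tor_0^{K[G_1]}(\D_{K[G_1]},-)$ --- needs to be made explicit.

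In short: the double-coset decompositions in the paper's proof do real work in both parts and are what makes the argument characteristic-free; your alternative homological-algebra framing (MV for ideals, LHS plus coinvariants) is attractive but in its present form has a wrong step in part (1) and a genuine characteristic restriction in part (2).
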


The first statement \ref{item:overgroupJZ} is essentially \cite[Proposition 5.2]{Jai22}, whose argument is followed to additionally prove statement \ref{item:overgroup}. We first prove the following simple lemma. 

\begin{lem} \label{sandwich}
    Let $G$ be a finitely generated group and suppose that $H\leq T\leq G$ are subgroups such that $|T: H|<\infty$. If $H$ is $\Dg$-independent in $G$, then $T$ is $\Dg$-independent in $G.$
\end{lem}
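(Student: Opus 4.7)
The plan is to compare the long exact sequences of $\Tor_*^{K[G]}(\Dg, -)$ obtained from the natural surjections $K[G/H] \twoheadrightarrow K[G/T] \twoheadrightarrow K$, whose composition is the augmentation. This produces a factorisation of the map which is injective by the hypothesis that $H$ is $\Dg$-independent in $G$:
\[
    \Tor_1^{K[G]}(\Dg, K[G/H]) \xrightarrow{\alpha} \Tor_1^{K[G]}(\Dg, K[G/T]) \xrightarrow{\beta} \Tor_1^{K[G]}(\Dg, K).
\]
An elementary diagram chase shows that if $\alpha$ is surjective, then the injectivity of $\beta \circ \alpha$ forces $\beta$ to be injective as well, which is exactly what $\Dg$-independence of $T$ in $G$ asks for.

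To prove the surjectivity of $\alpha$, consider the short exact sequence
\[
    0 \lrar I_T^G/I_H^G \lrar K[G/H] \lrar K[G/T] \lrar 0
\]
given by \cref{augiso}(c). Its Tor long exact sequence reduces the task to showing that $\Dg \otimes_{K[G]} (I_T^G/I_H^G) = 0$, and by \cref{augiso}(b) this module identifies naturally with $\Dg \otimes_{K[T]} (I_T/I_H^T)$. This vanishing is the heart of the proof and the main obstacle: a priori $\Dg \otimes_{K[T]} (I_T/I_H^T)$ is only a quotient of $H_1(T;\Dg)$, which may well be nonzero.

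The key observation for the vanishing is that the action of $T$ on the finite set $T/H$ by left multiplication has kernel $N := \bigcap_{t\in T} tHt^{-1}$, a normal subgroup of finite index in $T$. Since we may assume $T \neq 1$ and all groups in sight are torsion-free by the standing \cref{conv:sec3conv}, the group $T$ is infinite and hence $N$ is nontrivial. The $K[T]$-module structure on $K[T/H]$, and hence on its submodule $I_T/I_H^T$, therefore factors through $K[T/N]$. Using associativity of tensor products reduces the problem to showing that
\[
    \Dg \otimes_{K[T]} K[T/N] \cong \Dg \otimes_{K[N]} K \cong \Dg/\Dg \cdot I_N
\]
vanishes. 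But for any $n \in N \setminus \{1\}$, the nonzero element $n-1$ is a unit in the division ring $\Dg$, so $1 = (n-1)^{-1}(n-1) \in \Dg \cdot I_N$, giving the desired vanishing and completing the argument.
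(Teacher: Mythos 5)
Your proof is correct, and it takes a related but genuinely different route from the paper's. Both arguments hinge on establishing that $\Tor_0^{K[G]}(\Dg, I_T^G/I_H^G)=0$, but you arrive at this and use it differently.

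The paper first invokes \cref{prop:L2indep2} to translate $\Dg$-independence of $H$ in $G$ into the vanishing $\Tor_1^{K[G]}(\Dg, I_G/I_H^G)=0$, then runs the long exact sequence from $0 \to I_T^G/I_H^G \to I_G/I_H^G \to I_G/I_T^G \to 0$, and invokes \cref{prop:L2indep2} again at the end. This makes the proof silently depend on the hypotheses of that proposition (finite generation of the subgroups and $b_2^{K[G]}(G)=0$), which are not among the hypotheses of \cref{sandwich}. Your proof avoids this entirely by working directly with the defining maps: factoring the corestriction through $\Tor_1^{K[G]}(\Dg, K[G/T])$ and observing that surjectivity of the first factor forces injectivity of the second. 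This is a genuine improvement in robustness, and in fact lets the lemma be applied without worrying about a second $L^2$-Betti number.

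For the central vanishing, the paper asserts that $I_T^G/I_H^G$ is finite-dimensional over $K$; as stated this is false when $|G:T|=\infty$ (its $K$-dimension is $|G:T|\cdot(|T:H|-1)$), and what is actually true is that $I_T/I_H^T$ is finite-dimensional. Your argument supplies the missing step cleanly: after reducing via \cref{augiso}(b) to $\Dg\otimes_{K[T]}(I_T/I_H^T)$, you observe that the $T$-action factors through the finite quotient $T/N$ with $N=\bigcap_{t\in T} tHt^{-1}$, that $N$ is nontrivial because $T$ is torsion-free and infinite, and that any $n-1$ with $n\in N\smallsetminus\{1\}$ is a unit in $\Dg$, so $\Dg\otimes_{K[N]}K = \Dg/\Dg I_N = 0$. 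This is complete and precise. One small inaccuracy is your aside that $\Dg\otimes_{K[T]}(I_T/I_H^T)$ is ``a quotient of $H_1(T;\Dg)$'': it is a quotient of $\Dg\otimes_{K[T]}I_T$, which is an extension of $\Dg$ by a quotient of $H_1(T;\Dg)$, not itself such a quotient. This does not affect the argument, since you never rely on that description.
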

\begin{proof}
    Consider the short exact sequence of $K[G]$-modules 
    \[
        0\lrar I_{T}^G/I_H^G\lrar I_G/I_H^G\lrar I_G/I_T^G\lrar 0.
    \]
    The induced long exact sequence in $\Tor_\bullet^{K[G]}(\Dg, -)$ contains the following sequence of $\Dg$-modules:
    \[
        \Tor_1^{K[G]}(\Dg, I_G/I_H^G)\lrar \Tor_1^{K[G]}(\Dg, I_G/I_T^G) \lrar \Tor_0^{K[G]}(\Dg, I_{T}^G/I_H^G).
    \]
    By \cref{prop:L2indep2} and the assumption that $H$ is $\Dg$-independent in $G$, it follows that the left-most term $\Tor_1^{K[G]}(\Dg, I_G/I_H^G)$ is zero. Moreover, since $H$ is finite index in $T$, it is not hard to see that $I_{T}^G/I_H^G$ is a finite-dimensional $K$-vector space, so $\Tor_0^{K[G]}(\Dg, I_{T}^G/I_H^G)=0$. It follows directly from the short exact sequence above that $\Tor_1^{K[G]}(\Dg, I_G/I_T^G)=0$. This implies, again by \cref{prop:L2indep2}, that $T$ is $\Dg$-independent in $G$. \qedhere
\end{proof}
 We are now ready to explain the proof of \cref{overgroups}.
\begin{proof}[Proof of \cref{overgroups}] 

Let $H_1=H\cap G_1$. We begin by proving statement \ref{item:overgroupJZ}. By \cref{prop:L2indep2}, it is enough to show that 
\[
    \Tor_1^{K[G_1]}(\D_{K[G_1]}, I_{G_1}/I_{H_1}^{G_1})=0.
\]

\begin{claim} \label{inter} 
    As subsets of $K[G]$, we have the equality $I_{H_1}^{G_1}=I_{G_1}\cap I_{H}^{G}.$
\end{claim}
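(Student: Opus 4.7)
The plan is to prove the claim by recasting both sides in terms of the standard short exact sequence
\[
    0 \lrar I_H^G \lrar K[G] \lrar K[G/H] \lrar 0,
\]
which is \cref{augiso}(c) applied with $T$ trivial. Thus $I_H^G$ is exactly the kernel of the natural projection $\pi\colon K[G] \lrar K[G/H]$, and similarly $I_{H_1}^{G_1}$ is the kernel of $\pi_1\colon K[G_1] \lrar K[G_1/H_1]$. Since $I_H^G \subseteq I_G$, the augmentation condition defining $I_{G_1}$ is automatic for elements of $I_H^G$, so it suffices to prove
\[
    I_{H_1}^{G_1} \;=\; K[G_1]\cap I_H^G.
\]

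The key geometric input is that the inclusion $G_1 \longhookrightarrow G$ descends to an \emph{injection} $G_1/H_1 \longhookrightarrow G/H$ on left cosets. Indeed, for $g \in G_1$ one checks directly that $g H \cap G_1 = g H_1$: the inclusion $\supseteq$ is obvious, and conversely any $gh \in G_1$ with $h \in H$ forces $h \in g\inv G_1 = G_1$, hence $h \in H \cap G_1 = H_1$. This identification produces a commutative square
\[
    \begin{tikzcd}
        K[G_1] \arrow[r, "\pi_1"] \arrow[d, hook] & K[G_1/H_1] \arrow[d, hook] \\
        K[G] \arrow[r, "\pi"]                       & K[G/H]
    \end{tikzcd}
\]
in which both vertical arrows are injective (the right-hand one because it is the $K$-linearisation of an injection of sets).

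The claim then follows by a diagram chase: given $x \in K[G_1] \cap I_H^G$, commutativity of the square shows that $\pi_1(x)$ maps to $\pi(x) = 0$ in $K[G/H]$; since $K[G_1/H_1]\hookrightarrow K[G/H]$ is injective, we deduce $\pi_1(x) = 0$, i.e.\ $x \in I_{H_1}^{G_1}$. The reverse inclusion $I_{H_1}^{G_1} \subseteq K[G_1]\cap I_H^G$ is immediate because $I_{H_1} \subseteq I_H \subseteq I_H^G$ and $I_{H_1}^{G_1}$ is the left $K[G_1]$-module it generates inside $K[G_1]\subseteq K[G]$.

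This argument is essentially a tautology once the correct formulation is chosen, so no serious obstacle is expected; the only point requiring a moment of care is verifying that $G_1/H_1$ injects into $G/H$, which is where the hypothesis $H_1 = H\cap G_1$ is used.
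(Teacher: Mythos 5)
Your proof is correct and follows essentially the same route as the paper: both proofs identify $I_H^G$ and $I_{H_1}^{G_1}$ as kernels of the natural projections onto cosets and conclude via the same commutative square and the injectivity of $K[G_1/H_1]\hookrightarrow K[G/H]$. The only difference is that you explicitly verify the injectivity of $G_1/H_1 \hookrightarrow G/H$ (a helpful detail the paper simply asserts), along with the minor observation that $I_{G_1}\cap I_H^G = K[G_1]\cap I_H^G$.
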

\begin{proof}
    Consider the following commutative diagram of natural maps 
    \[\begin{tikzcd}
        K[G_1] \ar[d, "p_{H_1}^{G_1}"] \ar[r, hookrightarrow, "\iota_1"] & K[G] \ar[d, "p_H^G"]\\
        K[G_1/H_1] \ar[r, hookrightarrow, "\iota_2"] & K[G/H].
    \end{tikzcd}\]
    The horizontal arrows $\iota_1$ and $\iota_2$ are injective. It is clear that $I_{H_1}^{G_1}\subseteq I_{G_1}\cap I_{H}^{G}.$ For the reverse inclusion, we will use the above diagram. If $x\in  I_{G_1}\cap I_{H}^{G} $, then $x\in K[G_1]$ and $x$ belongs to the kernel of $p_H^G\circ \iota_1$. By the commutativity of the diagram and the injectivity of $\iota_2$, the element $x$ must belong to the kernel of $p_{H_1}^{G_1}$, which equals $I_{H_1}^{G_1}$ by \cref{augiso}. \renewcommand\qedsymbol{$\diamond$}
\end{proof}

\cref{inter} implies that the natural map of $K[G_1]$-modules $I_{G_1}/I_{H_1}^{G_1}\lrar I_{G}/I_{H}^{G}$ is injective. Furthermore, since $I_{G_1}/I_{H_1}^{G_1}$ (resp.~$I_{G}/I_{H}^{G}$) is the kernel of the augmentation $K[G_1/H_1]\lrar K$ (resp. $K[G/H]\lrar K$) by \cref{augiso}, there is an exact sequence of $K[G_1]$-modules of the form  
\[
    0\lrar I_{G_1}/I_{H_1}^{G_1} \lrar I_{G}/I_{H}^{G} \lrar K[G/H]/K[G_1/H_1] \lrar 0.
\]
Let $T\subseteq G$ be a set of representatives for the double $(G_1, H )$-cosets in $G$ such that $1\in T$. Denote by $M_t$ the $K[G_1]$-module $K[G_1/(H^t\cap G_1)]$. Then $K[G/H]\cong \oplus_{t\in T} M_t$ as $K[G_1]$-modules. Let $\D=\D_{K[G_1]}$. Notice that $\Tor_2^{K[G_1]}(\D, M_t)=0$ for all $t \in T$. The reason is that, by \cref{prop:HFprops}\ref{item:Shapi0}, its $\D$-dimension equals
\begin{equation} \label{eq:overgroups}
    b_2^{K[H^t\cap G_1]}(H^t\cap G_1)=b_2^{K[H]}(H) \cdot |H^t: H^t\cap G_1| = 0.
\end{equation}
Note that, to obtain (\ref{eq:overgroups}), we have also used the fact that $H^t\cap G_1$ is finite index in $H^t\cong H$, as well as the multiplicativity of $\Dg$-Betti numbers (\cref{prop:HFprops}\ref{item:fibetti}). By the additivity of the $\D$-dimension function, it follows from \cref{prop:HFprops}\ref{item:Shapi0} and \cref{eq:overgroups} that the $K[G_1]$-module $N=K[G/H]/K[G_1/H_1]\cong \oplus_{t\in T \smallsetminus \{1\}} M_t$ has $\Tor_2^{K[G_1]}(\D, N)=0$. 

The long exact sequence in $\Tor$ gives us an exact sequence of $\D$-modules of the form 
\begin{equation} \label{Deq}
    \begin{tikzcd}[sep = small]
        & \cdots  \arrow[r]\arrow[d, phantom, ""{coordinate, name=Z}]& \Tor_2^{K[G_1]}(\D,  N) \arrow[dll,  rounded corners, to path={--([xshift=2ex]\tikztostart.east)|- (Z) [near end]\tikztonodes-| ([xshift=-2ex]\tikztotarget.west)-- (\tikztotarget)}] \\
        \Tor_1^{K[G_1]}(\D,  I_{G_1}/I_{H_1}^{G_1}) \arrow[r]& \Tor_1^{K[G_1]}(\D, I_{G}/I_{H}^{G})  \arrow[r] &  \Tor_1^{K[G_1]}(\D,  N).
    \end{tikzcd}
\end{equation}
We have already proved that $\Tor_2^{K[G_1]}(\D,  N)=0$. So statement \ref{item:overgroupJZ} will follow from diagram \eqref{Deq} if we prove that $\Tor_1^{K[G_1]}(\D, I_{G}/I_{H}^{G})=0$. We know from \cref{prop:HFprops}\ref{item:Shapi1} that
\begin{equation*}
    \Tor_1^{K[G_1]}(\D, I_{G}/I_{H}^{G})   \cong \Tor_1^{K[G]}(\Dg, I_{G}/I_{H}^{G}).
\end{equation*}
Furthermore, the right-hand side vanishes by  \cref{prop:L2indep2} and the assumption that $H$ is $\Dg$-independent in $G$. This completes the proof of the first statement \ref{item:overgroupJZ}.

\smallskip

We now prove \ref{item:overgroup}. The subgroups $H_0\leq H\cap G_1\leq G_1$ have the property that $| H\cap G_1: H_0|<\infty$ and that $H_0$ is $\Dg$-independent in $G_1$. By \cref{sandwich}, $H\cap G_1$ is $\Dg$-independent in $G_1$. Let $G_2 \trianglelefteqslant G_1$ be a normal subgroup of finite index; since $\bH_2(H)=0$, it follows from part \ref{item:overgroupJZ} that $H\cap G_2$ is $\Dg$-independent in $G_2$. Thus, by replacing $G_1$ by $G_2$, we may assume that $G_1$ is normal in $G$ and that $H\cap G_1$ is $\Dg$-independent in $G_1$. 

We claim that $H$ is $\Dg$-independent in $G_0=\lan G_1, H\ran=G_1\cdot H$. For this, we first observe that $T=\{1\}$ is a set of representatives of the double $(G_1, H)$-cosets in $G_0$. So the argument given in \ref{item:overgroupJZ} shows that the canonical map 
\begin{equation}\label{eq:accident}
    I_{G_1}/I_{H\cap G_1}^{G_1}\xrightarrow[]{\cong} I_{G_0}/I_{H}^{G_0}
\end{equation}
is an isomorphism of $K[G_1]$-modules. Using that $H\cap G_1$ is $\Dg$-independent in $G_1$, we can argue as before to deduce from (\ref{eq:accident}), \cref{prop:HFprops}\ref{item:Shapi1} and \cref{prop:L2indep2} that $$  \Tor_1^{K[G_0]}(\mathcal{D}_{K[G_0]}, I_{G_0}/I_{H}^{G_0})\cong\Tor_1^{K[G_1]}(\mathcal{D}_{K[G_1]}, I_{G_1}/I_{H\cap G_1}^{G_1})=0.$$ Thus, again by \cref{prop:L2indep2}, $H$ is $\Dg$-independent in $G_0$ 
\end{proof}

The following is a direct consequence of \cref{overgroups}. 

\begin{cor}\label{cor:overgroups}
    Let $G$ be a finitely generated group and suppose that all finitely generated subgroups $H\leq G$ have the property that $\bH_2(H)=0$ and that there exist finite-index subgroups $H_1\leq H$ and $G_1\leq G$ such that $H_1$ is $\Dg$-independent in $G_1$. Then $G$ is $\Dg$-Hall.
\end{cor}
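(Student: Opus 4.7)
The plan is a direct application of \cref{overgroups}\ref{item:overgroup}, which has been set up precisely with this kind of statement in mind. The corollary is essentially just the observation that the hypotheses needed to run \ref{item:overgroup} are exactly what is postulated, applied uniformly to every finitely generated subgroup.

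More concretely, I would take an arbitrary finitely generated subgroup $H \leqslant G$ and aim to verify the definition of the $\Dg$-Hall property (\cref{def:L2Hall}) for $H$. By assumption there exist finite-index subgroups $H_1 \leqslant H$ and $G_1 \leqslant G$ such that $H_1$ is $\Dg$-independent in $G_1$ (in particular, $H_1 \leqslant G_1$, which is required for $\Dg$-independence to even make sense). Also by assumption, $\bH_2(H) = 0$. These are exactly the inputs of \cref{overgroups}\ref{item:overgroup} with $H_0 := H_1$, so that theorem supplies a finite-index subgroup $G_0 \leqslant G$ containing $H$ such that $H$ is $\Dg$-independent in $G_0$.

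Since $H$ was arbitrary, this exhibits, for every finitely generated subgroup of $G$, a finite-index subgroup of $G$ in which it embeds as a $\Dg$-independent subgroup. By \cref{def:L2Hall}, this is precisely the statement that $G$ is $\Dg$-Hall.

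There is essentially no obstacle here: the corollary is a clean packaging of \cref{overgroups}\ref{item:overgroup} as a closure property, and the main work has already been done in the proof of \cref{overgroups} (in particular, in the manipulation of the long exact sequence of $\Tor$'s that exploits $\bH_2(H) = 0$ to eliminate the obstruction coming from the double-coset decomposition of $K[G/H]$ as a $K[G_1]$-module). The only mildly subtle point worth flagging in the write-up is that the hypothesis implicitly requires $H_1 \leqslant G_1$, which is automatic from the phrase ``$H_1$ is $\Dg$-independent in $G_1$'', so no intersection step such as replacing $H_1$ by $H_1 \cap G_1$ is needed.
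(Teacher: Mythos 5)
Your proof is correct and follows exactly the paper's intended route: the paper itself states that \cref{cor:overgroups} is a direct consequence of \cref{overgroups}, and your instantiation $H_0 := H_1$ in \cref{overgroups}\ref{item:overgroup}, applied to an arbitrary finitely generated $H \leqslant G$ with the standing assumption $\bH_2(H) = 0$, is precisely the intended unpacking of that statement.
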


\Cref{cor:overgroups} offers a more flexible reformulation of the $\Dg$-hall property which will be used in \cref{thm:GOFGWCEG} to establish the $\Dg$-hall property for various graphs of free groups and cyclic edge groups. Moreover, the local condition on the vanishing of $\bH_2(H)=0$ for all $H \leq G$ can be condensed for certain groups of cohomological dimension $2$ using the following lemma.

\begin{lem}\label{lem:topvanish}
    Let $G$ be a group of cohomological dimension $\cd_K(G) = n$ with $b_n^{K[G]}(G) = 0$. Then $b_n^{K[H]}(H) = 0$ for every subgroup $H \leqslant G$.
\end{lem}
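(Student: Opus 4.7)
The plan is to compute both $b_n^{K[G]}(G)$ and $b_n^{K[H]}(H)$ as kernels of a common top differential, and then propagate the vanishing via a natural injection of chain complexes.

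First I would observe that since $K$ is a field, the hypothesis $\cd_K(G) = n$ forces $K[G]$ to have global dimension exactly $n$ (via the standard bound $\mathrm{proj.dim}_{K[G]}(M) \leq \cd_K(G) + \mathrm{proj.dim}_K(M) = n$ for any $K[G]$-module $M$). In particular $\cd_K(H) \leq n$, and if strict inequality holds, the conclusion is automatic since $\Tor_n^{K[H]}(-,K) = 0$. Assume $\cd_K(H) = n$.

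Next, pick a $K[G]$-free resolution $F_\bullet \to K$ of length $n$. Because $K[G]$ is a free right $K[H]$-module, $F_\bullet$ is simultaneously a $K[H]$-free resolution of $K$. The Betti numbers of interest are therefore
\[
    b_n^{K[G]}(G) = \dim_{\D_{K[G]}} \ker(d_n^G), \quad b_n^{K[H]}(H) = \dim_{\D_{K[H]}} \ker(d_n^H),
\]
where $d_n^G = \mathrm{id}\otimes d_n \colon \D_{K[G]} \otimes_{K[G]} F_n \to \D_{K[G]} \otimes_{K[G]} F_{n-1}$ and similarly for $d_n^H$. The vanishing $b_n^{K[G]}(G) = 0$ is precisely injectivity of $d_n^G$, and we want injectivity of $d_n^H$.

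The comparison comes via the natural $\D_{K[H]}$-linear maps $\iota_i \colon \D_{K[H]} \otimes_{K[H]} F_i \to \D_{K[G]} \otimes_{K[G]} F_i$ given by $\alpha \otimes f \mapsto \alpha \otimes f$; these are well-defined because $\D_{K[H]} \hookrightarrow \D_{K[G]}$ and they tautologically commute with the differentials. Writing $F_i = K[G]^{r_i}$ and decomposing $K[G] = \bigoplus_\lambda K[H]\, s_\lambda$ as a free left $K[H]$-module (with $\{s_\lambda\}$ a set of coset representatives), the map $\iota_i$ is identified with
\[
    \D_{K[H]}^{r_i \cdot [G:H]} \longrightarrow \D_{K[G]}^{r_i}, \quad (\alpha_{j,\lambda}) \longmapsto \Bigl(\textstyle\sum_\lambda \alpha_{j,\lambda}\, s_\lambda\Bigr)_{\!j}.
\]
Its injectivity is equivalent to the $\D_{K[H]}$-linear independence of $\{s_\lambda\}$ inside $\D_{K[G]}$. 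For finite index this is the content of the identity $\D_{K[H]} \otimes_{K[H]} K[G] \cong \D_{K[G]}$ of \cref{prop:HFprops}\ref{item:fibetti}; for infinite index it is the natural extension built into the Hughes-free construction, and would be the step one must justify (perhaps as the main technical input) by appealing to the uniqueness and inductive structure of Hughes-free division rings.

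Once each $\iota_i$ is known to be injective, the lemma follows from a two-line diagram chase: if $d_n^H(v) = 0$ then $d_n^G(\iota_n(v)) = \iota_{n-1}(d_n^H(v)) = 0$, so $\iota_n(v) = 0$ by injectivity of $d_n^G$, whence $v = 0$ by injectivity of $\iota_n$. Thus $b_n^{K[H]}(H) = 0$. The main obstacle throughout is establishing the $\D_{K[H]}$-linear independence of coset representatives in $\D_{K[G]}$; everything else is homological bookkeeping.
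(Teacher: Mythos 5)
Your approach is essentially identical to the paper's. Both proofs reduce everything to the injectivity of the natural map $\D_{K[H]} \otimes_{K[H]} F_\bullet \to \D_{K[G]} \otimes_{K[G]} F_\bullet$ for a $K[G]$-free resolution $F_\bullet$ of $K$, and both reduce that injectivity to the $\D_{K[H]}$-linear independence of a right transversal $T$ for $H$ in $G$ inside $\D_{K[G]}$. Your ``two-line diagram chase'' at the end is the same logic as the paper's closing argument (if a cycle $z$ is nonzero in $\D_{K[H]} \otimes F_n$, it stays nonzero in $\D_{K[G]} \otimes F_n$, contradicting $b_n^{K[G]}(G) = 0$).

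The one place you stop short is exactly where you say it ``would be the step one must justify,'' namely the $\D_{K[H]}$-linear independence of the transversal for $H$ of \emph{arbitrary} index. You describe this as ``the natural extension built into the Hughes-free construction,'' but that is not quite right: the Hughes-free axiom only postulates linear independence of the powers $t^i$ over $\D_{K[N]}$ when $N \trianglelefteq H$ is the kernel of a surjection $H \twoheadrightarrow \Z$, which is a far weaker statement than independence of a full coset transversal of an arbitrary subgroup. Upgrading the one to the other is a genuine theorem; the paper cites it as Gr\"ater's result (\cite[Corollary 8.3]{Grater20}), the same reference that underlies \cref{prop:HFprops}\ref{item:fibetti} in the finite-index case. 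So your proof is correct modulo replacing that hand-wave with the citation: the linear independence is a consequence of Hughes-freeness, but not an immediate one, and it is the only nontrivial input to the whole lemma. The rest of your write-up (the global-dimension observation, the explicit decomposition of $\iota_i$ via coset representatives) is sound but somewhat more elaborate scaffolding than the paper's two-sentence version of the same reduction.
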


\begin{proof}
    Note that the natural map
    \begin{equation} \label{eq:topvanish}
        \Dh \otimes_{K[H]} F \lrar \Dg \otimes_{K[G]} F
    \end{equation}
    is injective, where $F$ is a free left $K[G]$-module. To see this, it is enough to prove the claim when $F =K[G]$. Let $T$ be a right transversal for $H$ in $G$. The Hughes-freeness of $\Dg$ implies that the map $\oplus_{t \in T} \Dh \cdot t \lrar \Dg$ induced by the inclusions $\Dh \cdot t \longhookrightarrow \Dg$ is injective \cite[Corollary 8.3]{Grater20}. The map of (\ref{eq:topvanish}) when $F=K[G]$ equals the composition
    \[
        \Dh \otimes_{K[H]} K[G] \xrightarrow{\cong} \Dh \otimes_{K[H]} \left( \bigoplus_{t \in T} K[H] \cdot t \right)  \xrightarrow{\cong}\bigoplus_{t \in T} \Dh \cdot t \longhookrightarrow \Dg
    \]
    and is therefore injective, as desired.

    The claim now follows easily. Let $0 \lrar F_n \lrar \cdots \lrar F_0 \lrar K \lrar 0$ be a free resolution of the trivial $K[G]$-module $K$. This resolution exists because $G$ has a classifying space of dimension at most $n$ (we do not claim the modules $F_i$ to be finitely generated). If $\bH_n(H) \neq 0$, then there is a non-trivial element $z$ in the kernel of $\Dh \otimes_{K[H]} F_n \lrar \Dh \otimes_{K[H]} F_{n-1}$. Then $z$ is also a nonzero element of the kernel of $\Dg \otimes_{K[G]} F_n \lrar \Dg \otimes_{K[G]} F_{n-1}$ and therefore $\bG_n(G) \neq 0$. \qedhere
\end{proof}

While we only have a conjectural characterisation of which general graphs of free groups with cyclic edge are $L^2$-Hall, the case of an amalgam is entirely understood.

\begin{cor}
    If $G$ is an amalgam of free groups over a cyclic subgroup, then $G$ has the $L^2$-Hall property if and only if it does not contain a subgroup isomorphic to $F_2 \times \Z$.
\end{cor}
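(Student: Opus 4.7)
The plan is to combine \cref{thm:A}, \cref{lem:l2subgp}, and the standard fact that an amalgam of finitely generated free groups over a cyclic subgroup is hyperbolic relative to virtually abelian subgroups precisely when it does not contain a copy of $F_2\times \Z$.

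For the necessity direction, I would argue by contrapositive. Suppose $F_2\times \Z\leq G$; since the $L^2$-Hall property passes to subgroups by \cref{lem:l2subgp}, it suffices to show that $F_2\times \Z$ is not $L^2$-Hall. Take $H=F_2\leq F_2\times \Z$. Any finite-index subgroup of $F_2\times\Z$ containing $F_2$ has the form $N=F_2\times m\Z$ for some $m\geq 1$, since such a subgroup must project onto $F_2$ and meet the centre in a finite-index subgroup of $\Z$. By the K\"unneth formula for $L^2$-Betti numbers, together with the vanishing $\b_n(m\Z)=0$ for all $n$ (\cref{prop:HFprops}\ref{item:freebetti}), we obtain $\b_n(N)=0$ for every $n\geq 0$. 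On the other hand, L\"uck's restriction formula gives $\dim_{\U(N)} H_1(F_2;\U(N))=\b_1(F_2)=1$, while $\dim_{\U(N)}H_1(N;\U(N))=\b_1(N)=0$. By additivity of $\U(N)$-dimension, the kernel of the corestriction $H_1(F_2;\U(N))\to H_1(N;\U(N))$ has $\U(N)$-dimension at least $1$, so $F_2$ is not $L^2$-independent in $N$.

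For the sufficiency direction, assume $G=A*_C B$ does not contain $F_2\times\Z$. The goal is to show that $G$ is hyperbolic relative to a (possibly empty) family of virtually abelian subgroups, whereupon \cref{thm:A} immediately yields the $L^2$-Hall property. The candidate peripheral subgroups arise from centralisers of the edge group $C$: if $C$ is generated by a proper power in $A$, say $C=\langle a^n\rangle$ with $n\geq 2$, then the centraliser of $C$ in $A$ strictly contains $C$, and analogously for $B$; when this happens simultaneously for both vertex groups, amalgamating the two centralisers along $C$ produces a torus-knot-type subgroup $\langle a,b\mid a^n=b^m\rangle$, which is virtually $\Z^2$. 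The absence of $F_2\times\Z$ ensures that no further non-abelian obstructions appear, and the relative hyperbolicity then follows via the combination theorems of Bestvina--Feighn and Dahmani applied to the Bass--Serre tree of the splitting.

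The main subtlety is precisely the sufficiency direction: while the ingredients (combination theorems, Bass--Serre analysis of abelian subgroups) are classical, carefully assembling them to identify the peripheral structure and to verify that the family of virtually abelian subgroups is indeed one with respect to which $G$ is relatively hyperbolic is where the bulk of the work lies.
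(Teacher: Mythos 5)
Your necessity direction is correct and matches the paper's argument in spirit: you show $F_2\times\Z$ is not $L^2$-Hall by exhibiting $F_2$ as a finitely generated subgroup with $\beta_1^{(2)}(F_2)=1$ that cannot be $L^2$-independent in any finite-index overgroup $F_2\times m\Z$ (all of which have vanishing $L^2$-Betti numbers), then invoke \cref{lem:l2subgp}. For sufficiency, you take a genuinely different route than the paper: you try to verify directly that $G$ is hyperbolic relative to virtually abelian subgroups and then apply \cref{thm:A}. The paper instead cites Wise's Theorem 1.2 of \cite{Wis18} as a black box, which says exactly that an amalgam of free groups over a cyclic subgroup with no $F_2\times\Z$ has a finite-index limit subgroup; then it combines the $L^2$-Hall property for limit groups (\cref{thm:B} or \cite{BrownKar2023quantifying}), the vanishing of $b_2^{(2)}$ for limit groups, and \cref{cor:overgroups} to pass back to $G$.

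There is, however, a genuine gap in your sufficiency argument. You assert that the torus-knot-type subgroup $\langle a,b\mid a^n=b^m\rangle$ ``is virtually $\Z^2$'' — this is false in general: for $n,m\geq 2$ with $(n,m)\neq(2,2)$, that group is a non-abelian torus knot group which \emph{itself} contains $F_2\times\Z$ (its centre is $\langle a^n\rangle$ and the central quotient $\Z/n * \Z/m$ contains $F_2$). The correct logic is that the absence of $F_2\times\Z$ \emph{forces} either one of the exponents to be $\pm 1$ (in which case $C$ is a free factor of a vertex group and $G$ is free, hence hyperbolic) or $(|n|,|m|)=(2,2)$ (in which case the amalgamated centralisers form a virtually-$\Z^2$ GBS subgroup), but you need to make that case analysis explicit rather than presenting the $\Z^2$-claim unconditionally. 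Moreover, your appeal to the Bestvina--Feighn and Dahmani combination theorems is stated without identifying the peripheral structure or verifying their hypotheses (e.g., that $C$ is parabolic in the chosen peripherals of each factor); you acknowledge this yourself as ``where the bulk of the work lies.'' Even if carried out carefully, this route ultimately re-derives the content of Wise's Theorem 1.2 for the amalgam case, so the paper's citation-based argument is shorter. Separately, be aware that to apply \cref{thm:A} in the form stated in the introduction you must also rely on the paper's asserted equivalence between ``hyperbolic relative to virtually abelian'' and ``$\Phi_X$ balanced and solvable,'' a non-trivial translation not spelled out in your proposal.
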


\begin{proof}
    First note that $F_2 \times \Z$ is not $L^2$-Hall and so it cannot be a subgroup of an $L^2$-Hall group by \cref{lem:l2subgp}. Conversely, assume that $G$ does not contain a copy of $F_2 \times \Z$. Then \cite[Theorem 1.2]{Wis18} implies that $G$ has a finite-index subgroup that is a limit group.  Limit groups are $L^2$-Hall by \cite{BrownKar2023quantifying} and  have vanishing second $L^2$-Betti number by \cite{BridKoch_volumeGradient}. Thus, $G$ is $L^2$-Hall by \cref{cor:overgroups}.  \qedhere
\end{proof}

\section{Graphs of free groups with cyclic edge groups} \label{sec:GofFG}

\begin{rem}
    For this section and the next, we will focus on the $L^2$-Hall property. Indeed, for graphs of free groups with cyclic edge groups and for limit groups, the $L^2$-Betti numbers and the $\Dg$-Betti numbers coincide. For this reason, and for simplicity, this and the following sections are written in terms of $L^2$-homology. 
\end{rem}

Throughout this section, $G$ will denote the fundamental group of a finite graph of free groups with cyclic edge groups $(G_v, G_e; \Gamma)$ and $X $ will denote the geometric realisation of a corresponding graph of spaces $\mathcal X = (X_v, X_e; \Gamma)$ with $S^1$  edge spaces such that $G = \pi_1(X)$. The attaching maps $X_e \lrar X_v$ are always assumed to be immersions. We will prove the $L^2$-Hall property for some of these groups  in \cref{thm:GOFGWCEG}. Our strategy is as follows.
\begin{itemize}
    \item We allow ourselves to work with clean graphs of groups by \cref{thm:LERF} below and \cref{overgroups}\ref{item:overgroup}.
    \item Given a finitely generated subgroup $H$ of $G$, in order to craft $G_1 \leqslant G$ of finite index and an $L^2$-injective map $H\lrar G_1$, we will use the cyclic splitting of $G$ and  the $L^2$-injectivity criteria for graphs of groups developed in the previous section (such as \cref{lem:MV}). The construction of $G_1$ uses Wise's argument on the subgroup separability of some graphs of free groups with cyclic edge groups \cite{Wise_subgroupSep}.
    \item However, this geometric construction will not be directly applicable to $H$ and $G$, but only to further finite-index subgroup $H_0 \leqslant H$ and $G_0 \leqslant G$, so we will also require \cref{overgroups}\ref{item:overgroup} to reach the same conclusion about $H$ and $G$.
\end{itemize}

We now proceed with the construction. The following definition was introduced by Wise \cite[Definition 4.16]{Wise_subgroupSep}.

\begin{defn}[The weighted graph $\Phi_X$ associated to $X$] \label{def:balanced} 
    A \textit{weighted graph} is a directed graph whose edges have two integer labels (one on each endpoint). A weighted graph $\Gamma$ is \textit{balanced} if whenever $\sigma \colon S^1 \lrar \Phi_X$ is an oriented combinatorial loop (which means that $S^1$ is given a graph structure by subdivision which makes $\sigma$ into a map of graphs), the product of the outgoing weights divided by the product of the incoming weights on $S^1$ equals $\pm 1$ (where the weights on $S^1$ are induced by $\sigma$). Moreover, $\Gamma$ is \textit{solvable} if it can be oriented so that every vertex has at most one outgoing edge and the weight of every incoming edge is $\pm 1$.
    
    We are going to associate to $X = (X_v, X_e; \Gamma)$ a weighted graph $\Phi_X$ that is defined as follows. Fix an orientation for every simple closed combinatorial loop of all vertex spaces $X_v$ and fix an orientation of $S^1$ (say counter-clockwise, viewed as a subset of $\C$). The set of edges of $\Phi_X$ equals the set of edges of $\Gamma$. We identify the endpoint $v$ of $e$ to the endpoint $v'$ of $e'$ if and only if the images of the attaching maps $X_e \longrightarrow X_v$ and $X_{e'} \longrightarrow X_{v'}$ are equal. Let $n$ be the maximal integer such that $X_e \longrightarrow X_v$ represents an $n$th power of an element in $\pi_1(X_v)$. Then, if the attaching map respects orientations we, put a weight of $|n|$ on the end $e$ of $v$; otherwise we put a weight of $-|n|$.
\end{defn}

\begin{defn}\label{def:assocGBS}
    A connected weighted graph $\Gamma$ determines a graph of spaces $X_\Gamma$. as follows. For each vertex (resp.~edge) of $\Gamma$ there is a vertex (resp.~edge) space homeomorphic to $S^1$, all oriented counter-clockwise. The edge spaces are attached to the vertex spaces by degree $n$ covers, where $n$ is the weight on the corresponding end of the edge (we take $n < 0$ to mean that the covering map is of degree $|n|$ in the usual sense and it reverses orientations). We call $\pi_1(X_\Gamma)$ the \textit{generalised Baumslag--Solitar group associated to $\Gamma$} and $X_\Gamma$ the \textit{generalised Baumslag--Solitar complex associated to $\Gamma$}.
\end{defn}

\begin{lem}\label{lem:embedGBS}
    Let $X$ be a graph of free groups with cyclic edge groups and let $\Gamma$ be a component of the weighted graph $\Gamma$. Let $G$ be the generalised Baumslag--Solitar group associated to $\Gamma$. Then the natural map $G \lrar \pi_1(X)$ is $\pi_1$-injective.
\end{lem}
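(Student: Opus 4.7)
The plan is to realise the natural map as a morphism of graphs of groups and apply the normal-form theorem. Write the given graph-of-groups decomposition of $X$ as $(G_w, G_e; \Lambda)$, where $\Lambda$ plays the role of what was called the underlying graph earlier in the section. Each vertex $\nu$ of $\Gamma$ corresponds to a primitive element $\gamma_\nu$ of some vertex group $G_{w(\nu)}$, where $w(\nu)\in\V(\Lambda)$ is the underlying vertex of $\nu$, and each edge $e$ of $\Gamma$ is an edge of $\Lambda$. The natural map $G \to \pi_1(X)$ arises from the morphism of graphs of groups $\mathcal G_\Gamma \to \mathcal G^\Lambda$ given on vertex groups by the inclusion $\langle \gamma_\nu \rangle \hookrightarrow G_{w(\nu)}$, on edge groups by the identity $G_e \to G_e$, and on underlying graphs by $\nu \mapsto w(\nu)$ and $e \mapsto e$. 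The edge maps match by construction: by the definition of the weight $n_{e,\nu}$ on $\Phi_X$, both attaching maps send a generator of $G_e$ to $\gamma_\nu^{n_{e,\nu}}$.

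Given $g \in G \setminus \{1\}$, fix a reduced normal form
\[
    g = g_0\, t_{e_1}^{\varepsilon_1}\, g_1 \cdots t_{e_n}^{\varepsilon_n}\, g_n
\]
with respect to $\mathcal G_\Gamma$, where $g_j \in \langle \gamma_{\nu_j}\rangle$, the sequence $(e_1,\ldots,e_n)$ is a combinatorial path in $\Gamma$ with consecutive edges meeting at $\nu_j$, and the Britton reducedness condition holds: whenever $e_j = e_{j+1}$ and $\varepsilon_j = -\varepsilon_{j+1}$, the element $g_j$ lies outside the cyclic edge-group image $\langle \gamma_{\nu_j}^{n_{e_j,\nu_j}} \rangle$. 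Formally substituting $\phi(g_j) = \gamma_{\nu_j}^{k_j} \in G_{w(\nu_j)}$ yields a word for $\phi(g)$ in $\mathcal G^\Lambda$. The decisive point is that the edge-group image at the endpoint $\nu_j$ of $e_j$ is \emph{literally the same} subgroup $\langle \gamma_{\nu_j}^{n_{e_j,\nu_j}} \rangle$ of $\langle \gamma_{\nu_j} \rangle \leq G_{w(\nu_j)}$ in both $\mathcal G_\Gamma$ and $\mathcal G^\Lambda$, as both are determined by the weight $n_{e_j,\nu_j}$ recorded in $\Phi_X$. Therefore the Britton reducedness condition transfers verbatim, and the normal-form theorem for graphs of groups (see \cite[Ch.~I, \S 5]{Serre_arbres}) gives $\phi(g) \neq 1$. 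The case $n = 0$ is immediate since $\gamma_{\nu_0}$ has infinite order in the free group $G_{w(\nu_0)}$.

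The main subtlety is that the map $\Gamma \to \Lambda$ on underlying graphs need not be injective: several vertices of $\Gamma$ can lie over the same vertex of $\Lambda$ whenever a single free vertex group hosts multiple distinct attaching cyclic subgroups. Consequently, $\mathcal G_\Gamma$ does not sit as a subgraph of groups of $\mathcal G^\Lambda$ in the sense of \cref{def:subgraph}, so \cref{lem:subgraph_inject} does not apply directly; nor does the natural map $X_\Gamma \to X$ form a precovering, since each vertex circle of $X_\Gamma$ maps to $X_{w(\nu)}$ by an immersed loop rather than a covering. The normal-form argument circumvents both issues, as Britton reducedness is local to the cyclic subgroup $\langle \gamma_{\nu_j} \rangle$ and is insensitive to the presence of other primitive cyclic subgroups of $G_{w(\nu_j)}$.
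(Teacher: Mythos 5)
Your proof is correct and takes the same approach as the paper, which simply says to ``fix normal forms for elements in $G$ and $\pi_1(X)$'' and observe that ``elements of $G$ in normal form are sent to elements of $\pi_1(X)$ in normal form.'' You have supplied the details the paper leaves implicit, and your closing remark about why neither the subgraph-of-groups machinery (\cref{lem:subgraph_inject}) nor the precover framework applies here is a useful clarification of why the normal-form argument is the right tool.
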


\begin{proof}
    Fix normal forms for elements in $G$ and $\pi_1(X)$. It is then not hard to see that elements of $G$ in normal form are sent to elements of $\pi_1(X)$ in normal form. \qedhere
\end{proof}

\begin{ex}
    The Baumslag--Solitar group $\BS(m,n)$ is the fundamental group of a graph of spaces of the form $(S^1, S^1; \Gamma)$ where $\Gamma$ is a single loop and the two attaching maps are degree $m$ and $n$ covering maps $S^1 \longrightarrow S^1$. In this case $\Phi_X$ is a loop with one vertex and one edge, where the ends of the edge are labeled by $m$ and $n$. Note that $\Phi_X$ is balanced if and only if $m = \pm n$ and it is solvable if and only if $m = \pm 1$ or $n = \pm 1$.
\end{ex}

Many properties of graphs of free groups with cyclic edge groups can be characterised by the properties of $\Phi_X$. The following definition and result are due to Wise.

\begin{defn}[Clean graph of spaces] \label{def:clean}
    A graph of spaces is \textit{clean} if every edge map is a topological embedding.
\end{defn}

\begin{thm}[{\cite[Theorems 4.18 and 5.1]{Wise_subgroupSep}}]\label{thm:LERF}
    Let $G = \pi_1(X)$ be a finitely generated graph of free groups with cyclic edge groups. The following are equivalent:
    \begin{enumerate}
        \item $G$ is subgroup separable;
        \item $\Phi_X$ is balanced;
        \item the generalised Baumslag--Solitar groups associated to the components of $\Phi_X$ are all subgroup separable;
        \item $X$ has a finite clean cover.
    \end{enumerate}
\end{thm}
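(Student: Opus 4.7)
The plan is to prove the equivalence by establishing the cycle of implications $(1) \Rightarrow (3) \Rightarrow (2) \Rightarrow (4) \Rightarrow (1)$. The implication $(1) \Rightarrow (3)$ is immediate from the hereditary nature of subgroup separability: by \cref{lem:embedGBS}, each generalised Baumslag--Solitar group associated to a component of $\Phi_X$ embeds in $G$, and hence inherits its subgroup separability. For $(3) \Rightarrow (2)$, I would invoke the classical characterisation of subgroup separable GBS groups (due to Kropholler and others), which states that a GBS group is subgroup separable if and only if its weighted graph is balanced; unbalanced GBS groups contain distorted copies of $F_2 \times \Z$ which obstruct separability.

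The main content is the implication $(2) \Rightarrow (4)$. Given that $\Phi_X$ is balanced, the idea is to use balance to produce, on each component of $\Phi_X$, a positive integer valued function $n \colon \V(\Phi_X) \lrar \mathbb{Z}_{>0}$ such that for every edge $e$ with endpoints $v, w$ carrying weights $a, b$, the quantity $n(v)|a| = n(w)|b|$ equals a common integer $N_e$. The existence of such an $n$ is precisely equivalent to the balance condition (which says that the alternating product of weights along any closed loop is $\pm 1$, exactly the cocycle obstruction to solving the equations $n(v)|a| = n(w)|b|$). After replacing each vertex space $X_v$ by an appropriate connected cyclic cover of degree $n(v) \cdot M$ for a sufficiently large common multiple $M$, every elevation of an attaching map $X_e \lrar X_v$ becomes an embedding. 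Bookkeeping the number of elevations on each side of every edge, one assembles these lifts into a finite clean cover $\widetilde X \lrar X$.

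For $(4) \Rightarrow (1)$: if $X$ admits a finite clean cover $\widetilde X$, then $\pi_1(\widetilde X)$ is a finite-index subgroup of $G$ that splits as a clean graph of free groups with cyclic edge groups. Such groups are subgroup separable; this can be established either by a combinatorial Stallings-folding-style argument which completes a finite subgraph of spaces to a finite cover (cleanness is what ensures the process terminates and controls the elevations of edge maps), or by appealing to the fact that clean graphs of free groups with cyclic edge groups are virtually compact special. Since subgroup separability is inherited by finite-index overgroups, $G$ itself is subgroup separable.

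The principal obstacle is expected to be the covering-space construction in $(2) \Rightarrow (4)$. The delicate point is verifying that after passing to the covers of the vertex spaces the attaching maps lift consistently to embeddings on both sides of every edge simultaneously, and that the number of elevations on each side of an edge agree (so that the result is indeed a covering space, not merely a precover); both compatibilities are precisely what the balance condition encodes. A secondary obstacle is $(3) \Rightarrow (2)$, which relies on a nontrivial structural theorem for subgroup separable GBS groups, and the direction $(4) \Rightarrow (1)$, which requires establishing subgroup separability for clean graphs of free groups with cyclic edge groups as an independent input.
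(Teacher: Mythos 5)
The paper does not prove this theorem; it is quoted directly from Wise's work (\cite[Theorems 4.18 and 5.1]{Wise_subgroupSep}), so there is no in-paper argument to compare against. Your blind sketch does roughly track Wise's strategy: he proves that balance of $\Phi_X$ is equivalent to the existence of a finite clean cover (Theorem 4.18, via a covering-space construction along the lines you describe), and separately that clean graphs of free groups with cyclic edge groups are subgroup separable (Theorem 5.1/5.2, via a Stallings-type completion argument), then closes the loop by observing that an unbalanced $\Phi_X$ forces a non-separable GBS subgroup.

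Two caveats. First, the implication $(3)\Rightarrow(2)$ should not be treated as an external black box attributed to Kropholler: the balance criterion for subgroup separability of generalised Baumslag--Solitar groups is itself a special case of the very theorem being proved (a GBS group is a graph of $\Z$'s, hence a graph of free groups with cyclic edge groups, and its weighted graph is its own $\Phi_X$). Wise handles this as an instance of his argument rather than as an independent input, so your chain as written risks circularity unless you can point to a genuinely earlier source. Second, both $(2)\Rightarrow(4)$ and $(4)\Rightarrow(1)$ carry essentially all the weight of the theorem and are far more delicate than the sketch conveys: in $(2)\Rightarrow(4)$ one must simultaneously arrange that all elevations of all edge maps on both sides of every edge become embeddings \emph{and} that the elevations pair off so the result is a genuine covering rather than a precover; in $(4)\Rightarrow(1)$ the completion of a finite precover to a finite-sheeted cover (Scott's criterion) is a multi-step combinatorial argument, and the ``virtually compact special'' shortcut is correct but anachronistic relative to Wise's 2000 proof. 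Your identification of balance as the cocycle obstruction to solving $n(v)\lvert a\rvert = n(w)\lvert b\rvert$ is the right conceptual picture.
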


We highlight the following recent result of Abgrall and Munro that confirms a conjecture of Wise \cite[Conjecture 6.2]{Wise_subgroupSep} and gives an easily computable criterion for when a graph of free groups with cyclic edge groups is residually finite.

\begin{thm}[{\cite{AbgrallMunro_RFgraphsOfFree}}]\label{thm:RF}
    Let $G = \pi_1(X)$ be a finitely generated graph of free groups with cyclic edge groups. The following are equivalent:
    \begin{enumerate}
        \item $G$ is residually finite;
        \item every component of $\Phi_X$ is balanced or solvable;
        \item the generalised Baumslag--Solitar groups associated to the components of $\Phi_X$ are all residually finite.
    \end{enumerate}
\end{thm}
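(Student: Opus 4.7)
The equivalence (2) $\Leftrightarrow$ (3) concerns generalised Baumslag--Solitar groups in isolation: a GBS group defined by a weighted graph $\Lambda$ is residually finite if and only if $\Lambda$ is balanced (equivalently, unimodular) or solvable. This is a classical result, originally due to Meskin in the case of two-generator Baumslag--Solitar groups and extended to the general GBS setting by Kropholler and Levitt; I would invoke it as known. The implication (1) $\Rightarrow$ (3) is then immediate from \cref{lem:embedGBS}, which shows that the GBS group associated to each component of $\Phi_X$ embeds in $\pi_1(X)$, combined with the fact that residual finiteness passes to subgroups.

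The substantive direction is (2) $\Rightarrow$ (1), which I would approach as follows. When every component of $\Phi_X$ is balanced, \cref{thm:LERF} already gives the stronger conclusion that $G$ is subgroup separable, so the new challenge is the presence of solvable components, for which the corresponding GBS subgroups (morally $\BS(1,n)$-like) are residually finite but \emph{not} subgroup separable. For each solvable component the associated GBS group $B_i$ fits in an extension $1\to A_i\to B_i\to Q_i\to 1$ with $A_i$ locally cyclic and $Q_i$ virtually $\Z$, and is RF by a Hall-style argument: one quotients $A_i$ modulo a prime $p$ coprime to the relevant weight data to obtain a residually-finite-by-cyclic quotient that separates any chosen element of $B_i$.

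The plan for separating a general $1\neq g\in G$ is to analyse the action of $g$ on the Bass--Serre tree of the given graph-of-groups decomposition. If $g$ is elliptic, it is conjugate into some vertex group $G_v$, which is free and hence RF; the task is to upgrade a finite quotient of $G_v$ that separates $g$ to a finite quotient of $G$, using LERF of the vertex groups together with separability of the (finitely many) cyclic edge subgroups in each $G_v$. If $g$ is hyperbolic, the strategy is to kill the commutator subgroup of each vertex free group (after a finite-index reduction) to obtain a homomorphism from $G$ onto a GBS group determined by $\Phi_X$, and separate $g$ there via the RF of the GBS components granted by (3).

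The main obstacle, as I see it, is coordinating these constructions across components of $\Phi_X$ that share a vertex: the finite covers produced by Wise's clean-cover argument on balanced components must be made compatible with the congruence-style quotients on solvable components. I would expect the proof to proceed by first passing to a finite-index subgroup $G'\leq G$ that simultaneously (i) becomes clean along all balanced components of its induced $\Phi_{X'}$, and (ii) trivialises the torsion part of each $Q_i$ on solvable components, so that the remaining task is to separate elements in $G'$ by combining finite quotients that restrict correctly to each vertex free group. The delicate point is ensuring that the vertex-space finite quotients used to kill $g$ in one component are consistent with those forced by the other components meeting the same vertex, which I would resolve by a careful inductive argument on the components of $\Phi_X$.
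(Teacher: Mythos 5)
This theorem is stated in the paper with the citation \cite{AbgrallMunro\_RFgraphsOfFree}; the paper gives no proof of its own, so there is no internal argument to compare your sketch against. I will instead comment on the sketch itself.

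Your treatment of $(2)\Leftrightarrow(3)$ and $(1)\Rightarrow(3)$ is reasonable: the embedding of \cref{lem:embedGBS} plus inheritance of residual finiteness by subgroups gives $(1)\Rightarrow(3)$, and the classification of residually finite generalised Baumslag--Solitar groups is indeed available in the literature.

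The problem is the substantive direction $(2)\Rightarrow(1)$, where the sketch has a genuine gap. You propose, for a hyperbolic element $g$, to ``kill the commutator subgroup of each vertex free group (after a finite-index reduction) to obtain a homomorphism from $G$ onto a GBS group determined by $\Phi_X$.'' This step does not produce what you want. Abelianising a rank-$n$ vertex group gives $\Z^n$, not $\Z$, so the target is a graph of $\Z^n$'s with $\Z$ edge groups, not the GBS group associated to a component of $\Phi_X$. Projecting $\Z^n\to\Z$ onto the class of a chosen edge loop is not canonical, and there is no reason such a projection can be chosen consistently across the several edge loops that may share a vertex space; moreover the images of edge generators in the abelianisation may vanish or become proportional, so the composite need not be a graph-of-groups morphism, let alone a surjection onto the associated GBS group. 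There is also no natural retraction $G\to B_i$ onto the GBS subgroup produced by \cref{lem:embedGBS}: that lemma gives an injection, not a splitting. So the hyperbolic case is not settled by this device.

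A second concern: you propose passing to a finite-index subgroup that ``becomes clean along all balanced components.'' Cleanness is a global condition on the edge maps of the graph of spaces (\cref{def:clean}), and by \cref{thm:LERF} a finite clean cover exists if and only if the \emph{entire} weighted graph $\Phi_X$ is balanced. If $\Phi_X$ has a solvable but unbalanced component, no finite cover of $X$ is clean. One can hope to make only some edge maps into embeddings, but that is a weaker and less standard notion, and your sketch does not say how the compatibility of such partially clean covers with the congruence quotients on solvable components would actually be enforced. You flag this as ``the delicate point'' to be handled ``by a careful inductive argument,'' but this is precisely where the real content of the theorem lives, and the sketch does not supply it. In short, the easy implications are correct, but the hard direction is not carried out and the central mechanism you propose for it does not work as stated.
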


The main goal of this section is to establish the following theorem.

\begin{thm}\label{thm:GOFGWCEG}
    Let $G$ split as a finitely generated graph of free groups with cyclic edge groups and let $G = \pi_1(X)$, where $X$ is as above. If $\Phi_X$ is balanced and solvable, then $G$ has the $L^2$-Hall property. Equivalently, if $G$ is hyperbolic relative to virtually abelian subgroups, then $G$ has the $L^2$-Hall property.
\end{thm}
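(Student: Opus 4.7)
The equivalence between the two hypotheses ($\Phi_X$ balanced and solvable $\iff$ $G$ toral relatively hyperbolic) is essentially contained in \cref{thm:LERF,thm:RF} together with standard facts on relatively hyperbolic structures on GBS groups, so I treat only the combinatorial formulation. The strategy proceeds in three stages: (i) establish a global vanishing of the second $L^2$-Betti number for all finitely generated subgroups, which unlocks \cref{overgroups}; (ii) prove the $L^2$-Hall property under the additional assumption that $X$ is \emph{clean}; (iii) remove the cleanness hypothesis by passing to a finite-index clean cover and using \cref{overgroups}\ref{item:overgroup}.

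\textbf{Stage (i): vanishing of $b_2^{(2)}$.} Since $G$ is a finite graph of free groups with cyclic edge groups, $\cd(G)\le 2$. Applying Chiswell's Mayer--Vietoris sequence (\cref{thm:MV}) with coefficients in $\Dg$ and using $\cd(F_n)=1$ together with $b_1^{(2)}(\Z)=0$ gives $b_2^{(2)}(G)=0$. Then \cref{lem:topvanish} yields $b_2^{(2)}(H)=0$ for every subgroup $H\le G$; in particular the hypothesis of \cref{overgroups}\ref{item:overgroup} is satisfied for every finitely generated $H\le G$.

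\textbf{Stage (ii): the clean case.} Suppose $X$ is a clean graph of graphs with $S^1$ edge spaces, and let $H\le G=\pi_1(X)$ be finitely generated. Let $p\po Y\lrar X$ be the covering space corresponding to $H$. Then $Y$ inherits a graph of spaces structure in which the vertex spaces are graphs and the edge spaces are copies of $S^1$. Since $H$ is finitely generated, $Y$ contains a finite subgraph of spaces $Z\subseteq Y$ with $\pi_1(Z)=H$ (a compact core). The goal is to produce a finite-sheeted covering $\widehat X\lrar X$ together with a lift of $Z$ that realises $Z$ as a \emph{subgraph of spaces} of $\widehat X$ in the sense of \cref{def:subgraph_spaces}. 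For this, first apply Marshall Hall's theorem vertex-by-vertex to extend each graph $Z_v$ to a finite cover $\widehat X_v$ of $X_{p(v)}$ in which $Z_v$ embeds as a retract. Cleanness of $X$ ensures that the elevations to $\widehat X_v$ of the attaching circles of $X_{p(v)}$ are embedded circles; by subgroup separability of the vertex free groups one can further arrange that any two such elevations that come from distinct edge attachments in $Z$ remain disjoint in $\widehat X_v$. Now run the standard Stallings-style finite completion (as in \cite{Wise_subgroupSep} and the argument of \cref{prop:L2freeProd}) to pair up the remaining ``hanging'' edge-circle elevations and close up $Z$ into a finite-sheeted cover $\widehat X\lrar X$. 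By construction, the inclusion $Z\hookrightarrow \widehat X$ is a subgraph of spaces, the edge groups correspond, and each vertex inclusion $\pi_1(Z_v)\hookrightarrow\pi_1(\widehat X_v)$ is $\Dg$-injective by \cref{freeHall}. Since the edge groups are cyclic with $b_1^{(2)}=0$, \cref{lem:MV} applies and $H$ is $L^2$-independent in $\pi_1(\widehat X)$.

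\textbf{Stage (iii): removing cleanness and conclusion.} For the general case, \cref{thm:LERF} furnishes a finite-index subgroup $G_1\le G$ with $G_1=\pi_1(X')$ for some clean $X'$. Given a finitely generated $H\le G$, put $H_1=H\cap G_1$, which is finitely generated and of finite index in $H$. By Stage (ii), there is a finite-index $G_0\le G_1$ containing $H_1$ as an $L^2$-independent subgroup. Using Stage (i), we have $b_2^{(2)}(H)=0$, so \cref{overgroups}\ref{item:overgroup} (applied to $H_0=H_1$, the finite-index subgroup $G_1\le G$, and the $L^2$-independent inclusion $H_1\le G_0\le G_1$) produces a finite-index subgroup of $G$ containing $H$ as an $L^2$-independent subgroup. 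Since $H$ was arbitrary, $G$ is $L^2$-Hall.

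\textbf{Main obstacle.} The genuinely non-formal step is Stage (ii): assembling the compact core $Z$ into a finite-sheeted cover $\widehat X$ of $X$ \emph{as a subgraph of spaces}, so that the edge groups of $Z$ and of $\widehat X$ match exactly (condition (3) of \cref{def:subgraph_spaces}) and \cref{lem:MV} is genuinely applicable. This is where cleanness is essential, and where Wise's subgroup-separability argument must be executed carefully; without condition (3), the Mayer--Vietoris comparison could fail to be injective on $H_1(-;\Dg)$. Everything else is a clean assembly of the homological machinery from \cref{sec:L2hall}.
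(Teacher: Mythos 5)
Your Stages (i) and (iii) match the paper's reduction strategy (establish $b_2^{(2)}$-vanishing so that \cref{cor:overgroups} applies, and pass to a clean finite cover via \cref{thm:LERF}). The gap is in Stage (ii), and it is genuine, not a matter of careful execution.

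You assert that the cover $Y\lrar X$ corresponding to $H$ ``inherits a graph of spaces structure in which \dots the edge spaces are copies of $S^1$'' and that a finite subgraph of spaces $Z\subseteq Y$ can serve as a compact core. This is false in general: the edge spaces of $Y$ are either $S^1$ (when $H$ meets a conjugate of the relevant edge group) or $\R$ (when it does not), and the latter case is unavoidable. Your plan — run Marshall Hall vertex-by-vertex, arrange disjointness, close up hanging circle elevations — only makes sense once all edge spaces are compact. But a subgraph of spaces in the sense of \cref{def:subgraph_spaces} must satisfy $\pi_1(Z_e)=\pi_1(Z_{\o(e)})\cap\pi_1(X_e)$, which forces trivial edge $\pi_1$ along any strip, and such a $Z$ can never be realised as a subgraph of spaces of any finite cover $\widehat X$ (whose edge groups are all $\Z$). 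You flag this yourself in the ``Main obstacle'' paragraph, but it is not a matter of executing Wise carefully: \cref{lem:MV} simply does not apply to the inclusion $Z\hookrightarrow\widehat X$ you describe, because condition~(3) of \cref{def:subgraph_spaces} fails whenever a strip is present.

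The paper's actual proof resolves this by first compactifying the strips and only then invoking \cref{lem:MV}, and this is where the real content lies. After pruning, it quotients each strip by a suitable $n\Z$-action to obtain a compact complex $A$; the key claim is that $q_*\po\pi_1(Z)\lrar\pi_1(A)$ exhibits $\pi_1(A)$ as a \emph{multiple HNN extension} of $\pi_1(Z)$ over the periphery groups $\pi_1(\Omega_i)$, and one applies \cref{lem:MV} to \emph{that} splitting, not to a subgraph-of-spaces inclusion of $Z$ into a finite cover. To make this work one needs two ingredients you never touch: (a) a further reduction (via \cref{cor:overgroups}) to the case where $X$ contains no immersed Klein bottles, so that each periphery has the product form $\R\times\Omega$ with $\Omega$ a finite graph and identifiable boundary copies $\partial R_1\cong\partial R_2\cong\Omega$; and (b) the hypothesis that $\Phi_X$ is \emph{solvable} — which you never use after the reduction to cleanness — to guarantee the periphery groups $\pi_1(\Omega_i)$ are trivial or $\Z$ (otherwise the HNN-extension edge groups would be nonabelian free and \cref{lem:MV} would not help). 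Only after periphery closing does one complete the vertex graphs to finite covers (yielding a free-product extension) and finally pass to a finite-sheeted cover $X_1\lrar X$ along Wise's Steps 5--6. So your proposal is not an alternative proof of Stage (ii); it is missing the central construction.
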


\begin{rem}
    The condition that every component of $\Phi_X$ be solvable is necessary, since otherwise $G$ would contain a non-solvable Baumslag--Solitar subgroup. Such groups do not have the $L^2$-Hall property since they contain nonabelian free subgroups but have vanishing first $L^2$-Betti number. On the other hand, there are graphs of free groups with cyclic edge groups where $\Phi_X$ is unbalanced yet $G$ is still $L^2$-Hall (e.g.~$\BS(1,n)$ for $n \neq \pm 1$).
\end{rem}

 Motivated by this remark, we make the following conjecture, which is formally similar to \cref{thm:LERF,thm:RF}.

 \begin{conj}\label{conj:F2Z}
    Let $G = \pi_1(X)$ be a finitely generated graph of free groups with cyclic edge groups. The following are equivalent:
    \begin{enumerate}
        \item $G$ has the $L^2$-Hall property;
        \item every component of $\Phi_X$ is solvable;
        \item the generalised Baumslag--Solitar groups associated to the components of $\Phi_X$ all have the $L^2$-Hall property.
    \end{enumerate}
\end{conj}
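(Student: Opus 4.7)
The plan is to verify the hypotheses of \cref{cor:overgroups}. First I would check that $b_2^{(2)}(H) = 0$ for every subgroup $H \leq G$. Chiswell's Mayer--Vietoris sequence (\cref{thm:MV}), combined with $b_1^{(2)}(\Z) = 0$ (\cref{prop:HFprops}\ref{item:freebetti}) and the fact that free groups have cohomological dimension $1$, immediately yields $b_2^{(2)}(G) = 0$. Since $G$ has a $2$-dimensional classifying space (so $\cd(G) \leq 2$), \cref{lem:topvanish} then gives $b_2^{(2)}(H) = 0$ for all $H \leq G$. It will then suffice to show: for every finitely generated $H \leq G$, there is a finite-index $H_0 \leq H$ and a finite-index $G_1 \leq G$ with $H_0$ being $L^2$-independent in $G_1$.

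Next I would reduce to the clean case. By \cref{thm:LERF}, since $\Phi_X$ is balanced, $X$ admits a finite clean cover $\hat X \to X$; set $\hat G = \pi_1(\hat X)$ and $H_0 = H \cap \hat G$. It is enough to find a finite-index $\hat G_1 \leq \hat G$ in which $H_0$ is $L^2$-independent. Let $Y \to \hat X$ be the covering corresponding to $H_0$ and choose a finite compact core $Z \subseteq Y$ which is a connected subgraph of spaces with $\pi_1(Z) = H_0$. Since $Y$ covers the clean space $\hat X$, the edge spaces of $Z$ remain copies of $S^1$ attached by embeddings.

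The heart of the argument is to construct a finite cover $\hat X_1 \to \hat X$ into which $Z$ embeds as a subgraph of spaces in the sense of \cref{def:subgraph_spaces}, with $L^2$-injective vertex inclusions. For each vertex $v$ of $Z$, Marshall Hall's theorem provides a finite-sheeted cover $V_v$ of the corresponding vertex space of $\hat X$ into which $Z_v$ embeds as a free factor; such a free factor embedding is automatically $L^2$-injective (by \cref{lem:MV} applied to the free product decomposition). Using LERF of free groups, I would further arrange that the hanging attaching circles of edges of $Z$ incident to $Z_v$ embed disjointly in $V_v$. Finally, I would assemble these vertex covers into a finite cover $\hat X_1 \to \hat X$ via Stallings' principle, following Wise's subgroup separability argument \cite{Wise_subgroupSep}: one takes appropriate numbers of copies of each $V_v$ and matches the degrees of lifted circles at both ends of each edge of $\hat X$. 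Since $\hat X_1$ covers the clean space $\hat X$, it is itself clean, so $Z$ sits inside $\hat X_1$ as a subgraph of spaces with identical edge groups.

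With this in hand, \cref{lem:MV} applies: the vertex inclusions are $L^2$-injective by construction, the edge groups of $Z$ coincide with those of $\hat X_1$ (both are $\Z$'s), and $b_1^{(2)}(\Z) = 0$. This yields the desired $L^2$-independence of $H_0 = \pi_1(Z)$ in $\hat G_1 := \pi_1(\hat X_1)$, and an appeal to \cref{cor:overgroups} completes the proof. The main obstacle will be the combinatorial assembly step: producing vertex covers with $Z$ embedded, disjoint hanging circles, \emph{and} $L^2$-injective inclusions, and then matching lifting degrees across every edge to obtain the finite cover $\hat X_1$. The solvability of $\Phi_X$ (automatic once $\hat X$ is clean, as clean weights are $\pm 1$) is what allows this Stallings-style completion to terminate in a finite cover.
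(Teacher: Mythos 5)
This statement is labelled \texttt{conj} in the paper, not \texttt{thm}: it is an open conjecture with no proof in the article, so there is nothing of the paper's to compare your attempt against. What the paper \emph{does} prove (\cref{thm:GOFGWCEG}) is the single implication ``\,$\Phi_X$ balanced and solvable $\Rightarrow G$ is $L^2$-Hall'', and your sketch is essentially an (incomplete) version of that argument, not of the conjecture.

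Two substantive problems, beyond the fact that you only address one of the three implications of the stated TFAE:

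\textbf{Balance is not available.} Your reduction to the clean case via \cref{thm:LERF} requires $\Phi_X$ to be \emph{balanced}, which is strictly stronger than the hypothesis ``solvable'' in item (2) of the conjecture. The two are not equivalent: $\BS(1,n)$ with $|n|>1$ has $\Phi_X$ solvable but unbalanced, is not LERF, and has no finite clean cover. The paper explicitly flags this case in the remark right after \cref{thm:GOFGWCEG}. So your route (LERF $\to$ clean cover $\to$ compact core $\to$ Stallings' completion) breaks down precisely in the regime where the conjecture goes beyond the theorem.

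\textbf{The strips are missing.} You assert that because $Y$ covers the clean space $\hat X$, ``the edge spaces of $Z$ remain copies of $S^1$ attached by embeddings.'' That is false. The edge spaces of the cover $Y$ are the coverings of $S^1$ corresponding to the $H_0$-stabilisers of edges in the Bass--Serre tree, and for a generic finitely generated $H_0$ most of these are copies of $\R$ (trivial stabiliser). A compact core therefore meets these infinite strip edge spaces in compact intervals, not circles, and $Z$ is not a subgraph of spaces in the sense of \cref{def:subgraph_spaces}. Dealing with these strips --- quotienting them by a suitable $n\Z$ action after grouping them into peripheries, and proving that this periphery-closing step is $L^2$-injective via \cref{lem:MV} applied to a multiple HNN extension over trivial or $\Z$ subgroups --- is exactly Step~3 and \cref{claim:periph} of the paper's proof of \cref{thm:GOFGWCEG}, and is the genuine technical core of the argument. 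Your sketch skips directly from ``compact core'' to ``Marshall Hall on the vertex graphs'' and never confronts it. Note also that a preliminary pass to a finite cover killing the immersed Klein bottles is needed before the periphery argument can even be stated, and the solvability hypothesis enters precisely in showing that the peripheral graphs $\Omega_i$ have $\pi_1$ trivial or $\Z$.

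Finally, the directions $(1)\Rightarrow(2)$ (non-solvable $\Phi_X$ forces a non-solvable GBS subgroup, which has $b^{(2)}_1 = 0$ yet contains $F_2$, hence is not $L^2$-Hall, and $L^2$-Hall passes to subgroups by \cref{lem:l2subgp}) and $(2)\Leftrightarrow(3)$ are not addressed at all; the latter is itself open in general.
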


\subsection{The proof of \texorpdfstring{\cref{thm:A}}{Theorem A}} We prove \cref{thm:GOFGWCEG} above (which is \cref{thm:A} from the introduction). We make a few simplifying reductions that we hope will make the visualisation of the objects easier as well as put us in the context of the proof of \cite[Theorem 5.2]{Wise_subgroupSep}.

\begin{claim} \label{claim:Z} 
    It is enough to consider the case where all the edge groups are infinite cyclic.
\end{claim}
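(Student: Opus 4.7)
The plan is to collapse the edges with trivial edge groups and recombine the resulting pieces via \cref{prop:L2freeProd}. Since every vertex group is free, every cyclic edge group is either trivial or infinite cyclic. Let $\Gamma_0 \subseteq \Gamma$ consist of all vertices of $\Gamma$ together with those edges whose edge group is infinite cyclic, and let $\Gamma_1,\dots,\Gamma_n$ be the connected components of $\Gamma_0$. Write $\mathcal G_i$ for the corresponding subgraph of groups of $\mathcal G$ in the sense of \cref{def:subgraph}; by \cref{lem:subgraph_inject} each $\pi_1(\mathcal G_i) \longhookrightarrow G$ is injective, and by construction every edge group of $\mathcal G_i$ is infinite cyclic.

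The edges of $\Gamma$ outside $\Gamma_0$ are handled by standard Bass--Serre theory: tree edges with trivial edge group glue two subgraph pieces via a free product amalgamation, while non-tree edges with trivial edge group give HNN extensions with trivial associated subgroup, each contributing one free factor of $\Z$. Collapsing all such edges exhibits a decomposition
\[
    G \;\cong\; \pi_1(\mathcal G_1) \ast \cdots \ast \pi_1(\mathcal G_n) \ast F,
\]
where $F$ is a finitely generated free group. Each $\pi_1(\mathcal G_i)$ inherits the hypotheses of \cref{thm:GOFGWCEG}: the weighted graph $\Phi_{X_i}$ attached to the subcomplex of $X$ realising $\mathcal G_i$ is precisely the subgraph of $\Phi_X$ supported on the edges of $\Gamma_i$ (the construction in \cref{def:balanced} is local in the edges of $\Gamma$ with nontrivial edge group), and both balancedness and solvability of a weighted graph descend to any subgraph directly from \cref{def:balanced}. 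Moreover, $G$ is subgroup separable by \cref{thm:LERF}, so each $\pi_1(\mathcal G_i)$ is subgroup separable as well; the free factor $F$ is likewise subgroup separable and $L^2$-Hall by \cref{freeHall}.

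Granting the theorem in the restricted setting where all edge groups are infinite cyclic, each $\pi_1(\mathcal G_i)$ is then $L^2$-Hall. I would conclude by applying \cref{prop:L2freeProd} inductively to the displayed free-product decomposition: the proposition asserts that the class of finitely generated subgroup separable $L^2$-Hall groups is closed under free products, so the inductive hypothesis is preserved at each step and $G$ itself is $L^2$-Hall. The only verification that is not purely mechanical is that balancedness and solvability pass from $\Phi_X$ to $\Phi_{X_i}$, but both conditions are immediate from \cref{def:balanced}; I do not anticipate any serious obstacle, as this claim is a formal reduction whose purpose is to license the rest of the proof of \cref{thm:GOFGWCEG} to assume nontrivial edge groups throughout.
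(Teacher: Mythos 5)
Your proof is correct and follows essentially the same route as the paper's: both decompose $G$ as a free product of graphs of free groups with infinite-cyclic edge groups (plus a free factor from the trivial-edge-group edges) and then invoke \cref{prop:L2freeProd}. You spell out more explicitly than the paper does that the free factors inherit both balancedness and solvability of the weighted graph, and that they are subgroup separable and finitely generated — useful checks, but the argument is the same one.
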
 
\begin{proof}
    A balanced graph of free groups with cyclic edge groups is the free product of balanced graphs of free groups all whose edge groups are infinite cyclic (all of which are subgroup separable by \cref{thm:LERF}). By \cref{prop:L2freeProd}, it is enough to prove that each free factor is $L^2$-Hall, hence the claim. \renewcommand\qedsymbol{$\diamond$} \qedhere
\end{proof}

\begin{claim}\label{claim:clean}
    It is enough to prove \cref{thm:GOFGWCEG} in the case where $X$ is clean.
\end{claim}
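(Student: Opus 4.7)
The plan is to use Theorem \ref{thm:LERF} to promote a finite clean cover of $X$ to a finite-index subgroup $G' \leq G$ which is itself a clean graph of free groups with cyclic edge groups, and then transfer the $L^2$-Hall property from $G'$ up to $G$ using \cref{cor:overgroups}.

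More precisely, since $\Phi_X$ is balanced (this is part of the hypothesis that $G$ is hyperbolic relative to virtually abelian subgroups, via the equivalence in \cref{thm:LERF}), $X$ admits a finite clean cover $X' \lrar X$. Set $G' = \pi_1(X') \leq G$, a finite-index subgroup which inherits the structure of a clean graph of free groups with cyclic edge groups. Assuming the clean case of \cref{thm:GOFGWCEG} has been proved, $G'$ has the $L^2$-Hall property. To invoke \cref{cor:overgroups} and conclude that $G$ itself is $L^2$-Hall, we need to verify two things for every finitely generated $H \leq G$: that $\b_2(H)=0$, and that some finite-index subgroup of $H$ is $L^2$-independent in some finite-index subgroup of $G$.

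For the second, I would simply take $H_1 := H \cap G'$, which is finite-index in $H$ and finitely generated. Applying the $L^2$-Hall property of $G'$ to $H_1$ gives a finite-index $G_1 \leq G'$ (hence finite-index in $G$) in which $H_1$ is $L^2$-independent, as required.

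For the vanishing $\b_2(H)=0$, I would first verify $\b_2(G)=0$ using Chiswell's Mayer--Vietoris sequence (\cref{thm:MV}) with coefficients in $\D_{K[G]}$. The vertex groups are free, so $H_n(G_v;\D_{K[G]})=0$ for $n \geq 2$; the edge groups are infinite cyclic, and since the generator $t$ satisfies $t-1 \neq 0$ and $\D_{K[G]}$ is a division ring, multiplication by $t-1$ is invertible, giving $H_n(G_e;\D_{K[G]})=0$ for $n \geq 1$. The relevant portion of the Mayer--Vietoris sequence then forces $\b_2(G)=0$. Since $G$ is the fundamental group of a $2$-dimensional aspherical CW-complex, $\cd(G) \leq 2$, so \cref{lem:topvanish} propagates the vanishing to every subgroup: $\b_2(H)=0$ for all $H \leq G$. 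No serious obstacle is expected at this step; the substantive work lies in the clean case, which is to be handled in the remainder of the section.
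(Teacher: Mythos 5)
Your proof is correct and follows essentially the same route as the paper. The paper's own proof of this claim is terse: it invokes Wise's result that balanced clean covers exist and then applies \cref{cor:overgroups} without explicitly checking its hypotheses. You fill in exactly the two things that \cref{cor:overgroups} requires. The verification that $b_2^{K[G]}(G)=0$ via Chiswell's Mayer--Vietoris sequence is sound: for an infinite-cyclic edge group $\langle t\rangle$, tensoring the resolution $0 \to K[\langle t\rangle] \xrightarrow{\,\cdot(t-1)\,} K[\langle t\rangle] \to K \to 0$ with $\D_{K[G]}$ yields multiplication by the nonzero (hence invertible) element $t-1$, killing $H_1$ and $H_2$ of the edge groups, and the free vertex groups contribute nothing in degree $\geq 2$. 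Combined with $\cd(G)\leq 2$ and \cref{lem:topvanish} (with the trivial case $\cd(G)=1$ handled separately), this gives $b_2^{K[H]}(H)=0$ for all $H\leq G$, and the rest of the argument is the expected application of the $L^2$-Hall property of the clean cover's fundamental group $G'$ to $H\cap G'$. This is more careful than the paper's one-line invocation, but it is the same argument made explicit rather than a different one.
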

\begin{proof} 
    By \cite[Lemma 4.4 and Theorem 4.18]{Wise_subgroupSep}, $X$ has a clean finite-sheeted covering $X_c \lrar X$. By \cref{cor:overgroups}, the $L^2$-Hall property passes to finite-index overgroups, so it is enough to prove \cref{thm:GOFGWCEG} for $\pi_1(X_c)$. \renewcommand\qedsymbol{$\diamond$} \qedhere
\end{proof}

\begin{defn}
    Let $X$ be a clean graph of free groups with cyclic edge groups. An \textit{immersed Klein bottle} in $X$ is a subcomplex $K \subseteq X$ that corresponds to a loop in $\Phi_X$ whose associated generalised Baumslag--Solitar group is a Klein bottle group. Similarly, an immersed torus is a subcomplex corresponding to a loop in $\Phi_X$ whose associated generalised Baumslag--Solitar group is $\Z^2$.
\end{defn}

Note that an immersed Klein bottle $K$ is indeed the image of a Klein bottle surface $S$ under a cellular immersion, where $S$ is the generalised Baumslag--Solitar complex associated to the loop corresponding to $K$. Similarly, if $T$ is an immersed torus, then there is a topological torus $S$ and a cellular immersion $S \lrar T$.

\begin{claim}
    It is enough to prove \cref{thm:GOFGWCEG} in the case where $X$ is clean and does not contain any immersed Klein bottles.
\end{claim}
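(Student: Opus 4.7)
\textit{Proof plan.} By the previous claim we may assume $X$ is clean. The plan is to construct a finite-sheeted cover $X_1 \to X$ which is still clean and which contains no immersed Klein bottles, and then invoke \cref{cor:overgroups} to transfer the $L^2$-Hall property from $G_1 := \pi_1(X_1)$ back to $G$. The hypothesis of \cref{cor:overgroups} --- that $b_2^{(2)}(H)=0$ for every finitely generated $H \leqslant G$ --- is satisfied: Chiswell's Mayer--Vietoris sequence (\cref{thm:MV}) with $\mathcal U(G)$-coefficients shows $b_2^{(2)}(G) = 0$ (the free vertex groups contribute zero in degree $2$ and the infinite cyclic edge groups, being amenable, contribute zero in degree $1$ by \cref{prop:HFprops}\ref{item:freebetti}), and since $\cd(G) \leqslant 2$ we can then apply \cref{lem:topvanish} to propagate the vanishing to all subgroups.

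To construct $X_1$, the key observation is that in the clean setting every end of every edge of $\Phi_X$ carries weight $\pm 1$, and a loop in $\Phi_X$ gives rise to an immersed Klein bottle precisely when the product of its signs around the loop equals $-1$. The natural strategy is to construct an index $2$ subgroup $G_1 \leqslant G$ via a homomorphism $\phi \colon G \to \Z/2$ which is nontrivial on every Klein bottle element. After fixing orientations on all edge spaces $X_e$ and on all primitive loops in all vertex spaces, define $\phi$ to be trivial on each vertex group and to send each stable letter $t_e$ (for $e$ outside a fixed spanning tree $T$ of $\Gamma$) to the sign $\sigma(e) \in \{0,1\}$ of the edge $e$. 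Letting $G_1 := \ker \phi$ and $X_1 \to X$ the corresponding $2$-fold cover, cleanness is inherited by $X_1$ because lifts of embeddings through covering maps remain embeddings, and any loop in $\Phi_{X_1}$ projects to a loop in $\Phi_X$ on which $\phi$ vanishes, hence has sign product $+1$. Thus $X_1$ contains no immersed Klein bottles.

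The main technical point is the well-definedness of $\phi$: it must respect both the HNN-type relations $t_e \varphi_{e,\o}(g) t_e^{-1} = \varphi_{e,\t}(g)$ and the spanning tree identifications $t_e = 1$ for $e \in T$. The former are automatic because $\phi$ vanishes on vertex groups; the latter forces $\sigma(e) = +1$ for every $e \in T$. I would handle this by choosing orientations on primitive loops in the vertex spaces inductively along $T$: rooting $T$ at any vertex and walking outward, at each new vertex I adjust the orientation of the primitive loop hit by the incoming tree edge so that that edge becomes orientation-preserving. This is possible because flipping the orientation of a primitive loop in $X_v$ toggles the signs of exactly those edges whose attaching map lands in that loop. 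Should this naive orientation argument run into multiplicity obstructions (e.g.~two tree edges at a vertex targeting the same primitive loop with opposite forced signs), one may fall back on the alternative approach of first passing to a larger finite cover which makes all spanning tree edges orientation-preserving --- this is a finite separation problem inside the subgroup separable group $G$ (\cref{thm:LERF}) --- and then running the $\phi$ construction on that cover, still finite over $X$, before applying \cref{cor:overgroups}.
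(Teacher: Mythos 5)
Your argument is correct but takes a genuinely different route from the paper's, and it is worth comparing the two. The paper's proof is ``soft'': it lists the finitely many immersed Klein bottles $K_1, \ldots, K_n$ of $X$, and inductively uses subgroup separability of $G$ (specifically, separability of the $\Z^2$ inside each $\pi_1(K_j)$) to build a tower of finite-sheeted regular covers killing the $K_j$ one at a time, then invokes \cref{cor:overgroups}. Your proof is ``hard'': you build one explicit $\Z/2$-quotient $\phi$ whose kernel simultaneously unwraps every Klein bottle loop. Both approaches are valid, both rely on \cref{cor:overgroups}, and the hypothesis $b_2^{(2)}(H) = 0$ for all finitely generated $H$ is checked the same way (Mayer--Vietoris plus \cref{lem:topvanish}); that part of your write-up is also implicitly used by the paper at the same point. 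What your approach buys is an explicit index-$2$ cover rather than one obtained from an unspecified separability quotient; what the paper's approach buys is that it avoids the bookkeeping about well-definedness of $\phi$ and about why $\Phi_{X_1}$ has no new Klein bottle loops.

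On the technical points you raise: the ``multiplicity obstruction'' you worry about in the tree induction does not actually arise. In a rooted spanning tree $T$ each vertex $v' \neq v_0$ has a unique parent edge, and the only primitive loop in $X_{v'}$ whose orientation is constrained at the moment you process $v'$ is the one hit by that single parent edge; all other primitive loops in $X_{v'}$ (including those hit by child edges) may be oriented freely and get adjusted when their own children are processed. So the fallback via separability is not needed. Two places do deserve slightly more care than you give them. First, cleanness is inherited under covers not because ``lifts of embeddings are embeddings'' in the naive sense, but because in a graph-of-spaces cover one has $\pi_1(X_{1,e_1}) = \pi_1(X_{1,v_1}) \cap \pi_1(X_e)^g = \pi_1(\ell_1)$, so the induced edge map $X_{1,e_1} \to \ell_1$ is a degree-$1$ covering of circles. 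Second, the final step (``any loop in $\Phi_{X_1}$ projects to a loop in $\Phi_X$ on which $\phi$ vanishes, hence has sign product $+1$'') is correct but needs the observation that the weights of $\Phi_{X_1}$ are pulled back from $\Phi_X$ along the covering, together with the fact that --- once your tree induction forces all $T$-edges to have sign $+1$ --- the value $\phi(t_\gamma)$ of $\phi$ on the stable letter $t_\gamma \in G$ of any cycle $\gamma$ in $\Phi_X$ (or of any cycle in $\Phi_{X_1}$, whose stable letter lies in $G_1 = \ker\phi$) computes exactly the $\Z/2$-sign of that cycle. Stating this explicitly closes the gap.
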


\begin{proof}
    By \cref{claim:clean}, we may assume that $X$ is clean. Let $K_1, \dots, K_n$ denote the immersed Klein bottles in $X$. By subgroup separability of $G = \pi_1(X)$, there is a finite-sheeted regular cover $p_1 \colon X_1 \longrightarrow X$ where the components of $p\inv(K_1)$ are all immersed tori. Assume now that we have constructed some finite-sheeted regular cover $p_i \colon X_i \longrightarrow X$ so that for each $j \leqslant i$ every component of $p_i\inv(K_j)$ is an immersed torus. Let $K$ be an immersed Klein bottle component of $p_i\inv(K_{i+1})$. Again we may pass to a further finite-sheeted cover $q_{i+1} \colon X_{i+1} \longrightarrow X_i$ such that the composition
    \[
        p_{i+1} \colon X_{i+1} \longrightarrow X_i \longrightarrow X
    \]
    is regular and every component of $q_{i+1}\inv(K)$ is an immersed torus. But then every component of $p_{i+1}\inv(K_{i+1})$ is an immersed torus by regularity of the cover. By \cref{cor:overgroups}, it is enough to show that $\pi_1(X_n)$ has the $L^2$-Hall property. \renewcommand\qedsymbol{$\diamond$} \qedhere
\end{proof}

\begin{proof}[Proof (of \cref{thm:GOFGWCEG})]
    By the claims above, we assume without loss of generality that $G$ is the fundamental group of a clean graph of spaces $X = (X_v, X_e; \Gamma)$, where the vertex spaces $X_v$ are graphs and the edge spaces $X_e$ are circles. Moreover, we assume that $X$ does not contain any immersed Klein bottles.
    
    Let $H$ be a finitely generated subgroup of $G$. Following the proof of \cite[Theorem 5.2]{Wise_subgroupSep}, we will show that there is a finite-index subgroup $H_1 \leqslant H$ that is $L^2$-independent in a finite-index subgroup $G_1 \leqslant G$. By \cref{overgroups}, this is enough to prove the theorem. We break up our proof into steps in a similar way to the proof of \cite[Theorem 5.2]{Wise_subgroupSep}.
    
    Let $Y \longrightarrow X$ be the covering space corresponding to $H$. Note that $Y$ has a natural decomposition into a clean graph of spaces $(Y_v, Y_e; \Gamma_Y)$, where each of the vertex spaces are graphs and each of the edge spaces are homeomorphic to either $S^1$ or $\R$.

    \setcounter{step}{0}
    
    \begin{step}[The subcomplex]
        Denote the underlying graph of $Y$ by $\Gamma(Y)$. Since $H$ is finitely generated, there is a finite connected subgraph $\Upsilon$ of $\Gamma(Y)$ such that the inclusion $Y_\Upsilon \longhookrightarrow Y$ of the restricted graph of spaces $Y_\Upsilon$ is a $\pi_1$-isomorphism.  
    \end{step}

    \begin{step}[Pruning]
        For each vertex space $Y_v$ of $Y_\Upsilon$, let $Z_v$ be the smallest connected subgraph containing the images of all the edge spaces of $Y_\Upsilon$ and such that $Z_v \longhookrightarrow Y_v$ induces a $\pi_1$-isomorphism. Let $Z \subseteq Y_\Upsilon$ be the union of the spaces $Z_v$ and the edge spaces $Y_e$ of $Y_\upsilon$. Note that $Z$ is connected and has a natural graph of spaces structure $(Z_v, Z_e = Y_e; \Upsilon)$ such that the inclusion $Z \longhookrightarrow Y_\Upsilon$ still induces a $\pi_1$-isomorphism. The resulting vertex spaces of $Z_v$ have a compact core with pairs of infinite rays attached to them coming from the attaching maps of non-compact edge spaces in $Y$ (see \cref{fig:pruned}).
        
        \begin{figure}[ht]
            \centering
            \includegraphics[scale=0.5]{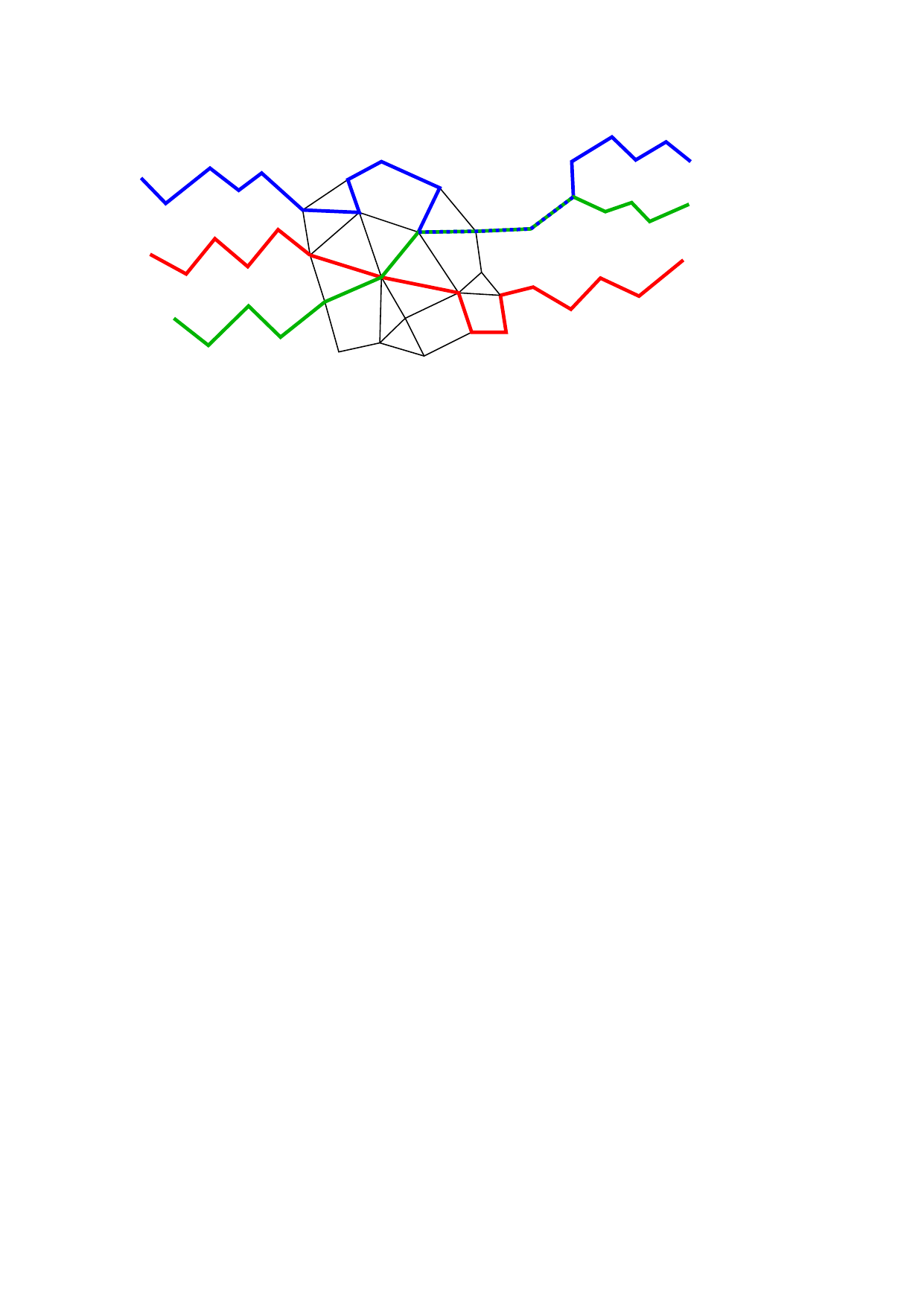}
            \caption{A vertex space of $Z$. The thickened lines represent attaching maps of non-compact edge spaces, each of which being homeomorphic to $\R$.}
            \label{fig:pruned}
        \end{figure}
    \end{step}

    \begin{step}[$L^2$-independence of periphery closing]
        This is the main step of the proof. Let $e$ be an edge of $\Upsilon$ and let $I = [0,1]$ be the closed unit interval. If $Z_e \cong S^1$, then we  call $Z_e \times I$ a \textit{cylinder}; if $Z_e \cong \R$, then $Z_e \times I$ is a \textit{strip}. If two strips in $Z$ have a non-compact intersection, then the periodicity of the attaching maps implies that their intersection must in fact be homeomorphic to $\R$. 
    
        Note that $\Z$ acts on each of the strips by covering translations (where the covering refers to a strip in $Z$ covering a cylinder in $X$). As in \cite[Theorem 5.2, Step 3]{Wise_subgroupSep}, choose $n$ large enough so that, for any edge $e$ corresponding to a strip, all the vertices of $Z_e \times I$ with valence at least $3$ (the valence is counted in the vertex graphs adjacent to the strip) are a distance less than $n$ apart. Then quotient the strips of $Z$ by the action of $n\Z$ to form a new complex $A = Z / {\sim}$. Now $A$ is a clean compact graph of graphs with $S^1$ edge spaces. A typical vertex space is shown in \cref{fig:closedup}.

        \begin{figure}[ht]
            \centering
            \includegraphics[scale = 0.5]{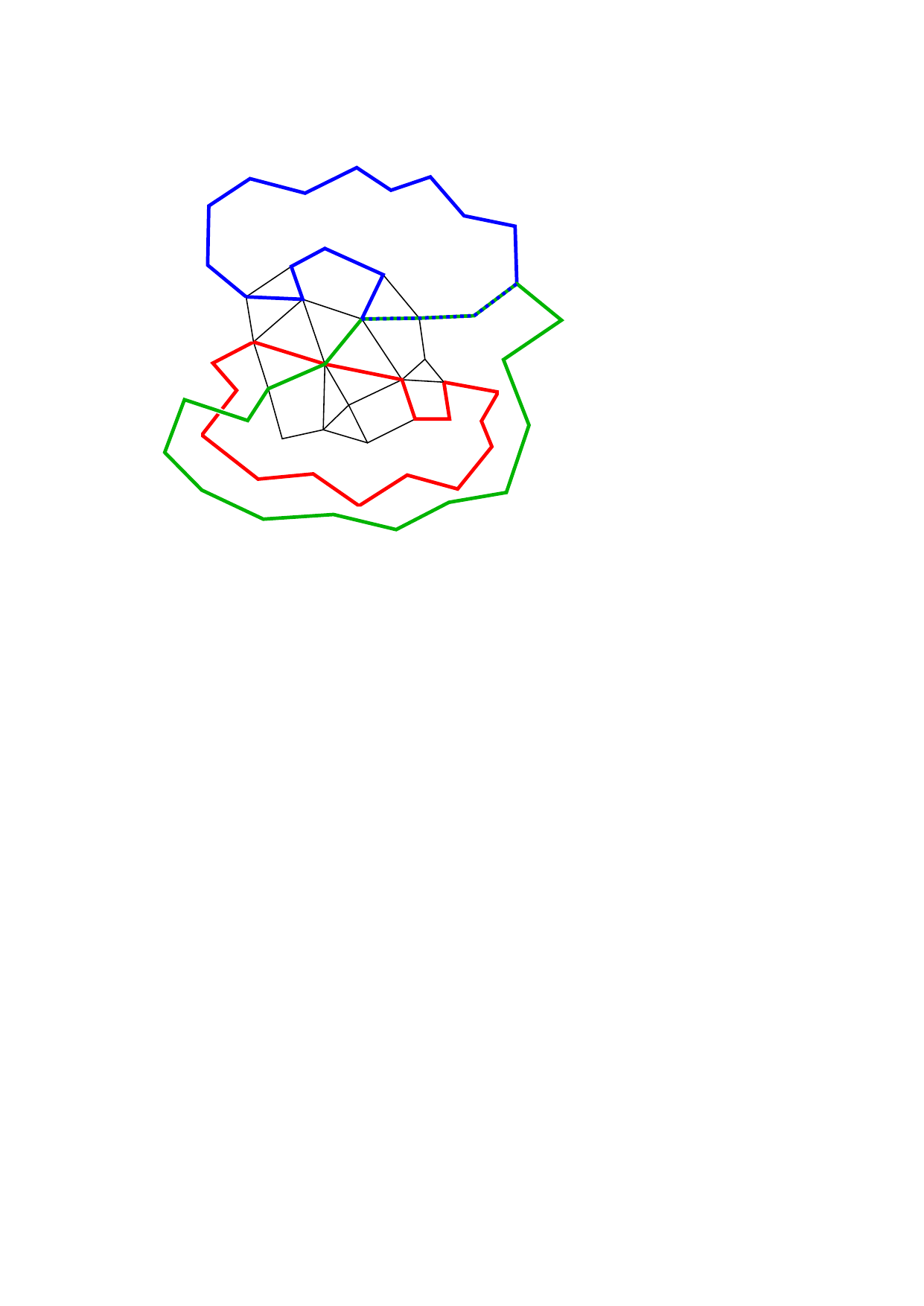}
            \caption{A vertex space of $A$. The vertex space is obtained from that in \cref{fig:pruned} by quotienting the thickened lines by the action of $n\Z$.}
            \label{fig:closedup}
        \end{figure}

        We need to introduce a definition based on one given by Hsu and Wise \cite[Definition 9.1]{HsuWise_graphOfFree}. Declare two strips to be \textit{equivalent} if their intersection is a copy of $\R$; this rule generates an equivalence relation on the set of strips in $Z$. The \textit{periphery} containing a strip $S$ is the union of the strips in the equivalence class of $S$.
        
        Fix a periphery $P$. By the assumption that $X$ contains no immersed Klein bottles, there is a compact subset $K \subseteq Z$ such that $P \smallsetminus K$ is homeomorphic to two disjoint copies of $\R_{>0} \times \Omega$, where $\Omega$ is some finite graph. The effect of quotienting by the action of $n\Z$ can then be rephrased as follows: choose $K$ compact and sufficiently large so that $K$ is a compact core for $Z$ and all the vertices in $P \smallsetminus K$ are of degree $2$. We also require, for every strip $S \subseteq P$, that $K \cap S$ be a fundamental domain for the action of $n\Z$ on $S$. Denote by $R_1$ and $R_2$ the copies of $\R_{>0} \times \Omega$ in $P \smallsetminus K$. We then form quotient of the complex $Z \smallsetminus (R_1 \sqcup R_2)$ by identifying the two copies of $\partial R_1 \cong \partial R_2 \cong \Omega$ (see \cref{fig:periph}). Performing this process for each periphery yields the complex $A$.

        \begin{figure}[ht]
            \centering
            \includegraphics[scale = 0.5]{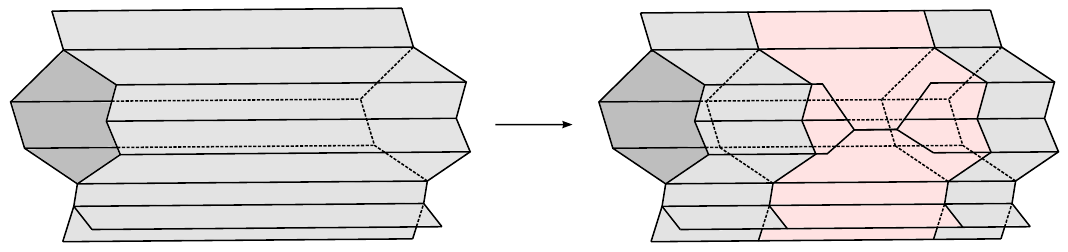}
            \caption{Part of a periphery $P$ is shown on the right. The shaded region represents $K \cap P$, where  $K \subseteq Z$ is as above. The horizontal lines are contained in vertex spaces of $Z$. Outside of $K$, the horizontal lines do not intersect since all the vertices there are of degree $2$. However, they may have a compact intersection inside $K$ as shown in the figure. In this figure, the graph $\Omega \cong \partial R_1 \cong \partial R_2$ is a cycle with two finite trees hanging off it. On the left is part of a copy of $\R \times \Omega$. The whole diagram represents an immersion $\R \times \Omega \lrar P$, which is an isomorphism outside of a compact set.}
            \label{fig:periph}
        \end{figure}

        \begin{claim}
            The injections $\partial R_i \longhookrightarrow P$ are $\pi_1$-injective.
        \end{claim}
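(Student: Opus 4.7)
The plan is as follows. First, using the product structure $R_i \cong \R_{>0} \times \Omega$ of each end, we construct a strong deformation retraction $P \lrar K \cap P$ that collapses each end onto its boundary $\partial R_i$; concretely, we use the homotopy $((t,\omega),s)\mapsto ((1-s)t,\omega)$ on each $\overline{R_i} \cong \R_{\geqslant 0}\times\Omega$, extended by the identity on $K\cap P$. Under this retraction, the inclusion $\partial R_i \hookrightarrow P$ becomes the subcomplex inclusion $\Omega \hookrightarrow K \cap P$, so it suffices to prove $\pi_1$-injectivity of the latter.

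Second, let $T \subseteq A$ denote the image of $P$ in $A$, which is obtained from $K \cap P$ by identifying the two boundary copies of $\Omega$. The composition $\Omega \hookrightarrow K \cap P \lrar T$ coincides with the subcomplex inclusion $\Omega \hookrightarrow T$, so it further suffices to show that $\pi_1(\Omega) \lrar \pi_1(T)$ is injective. The space $T$ inherits from $A$ a clean graph of graphs structure with $S^1$ edge spaces whose underlying graph is the cycle corresponding to the loop in $\Phi_X$ giving rise to the periphery $P$. We refine this structure by subdividing each cylinder edge of $T$ along its transverse arc from $\Omega$; in the refined decomposition, $\Omega$ appears as a subgraph of groups of $T$, with vertex spaces given by the cylinder arcs together with the connecting segments contributed by the original vertex graphs of $T$. \cref{lem:subgraph_inject} then yields the desired $\pi_1$-injectivity.

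The main obstacle lies in verifying that the refined structure genuinely forms a graph of spaces of $T$ satisfying the compatibility conditions of \cref{def:subgraph}, in which $\Omega$ appears as a valid subgraph of groups. Concretely, one needs to check that $\Omega$ meets each cylinder in a single arc (so that the subdivision is unambiguous) and that all the resulting attaching maps restrict to $\pi_1$-injective maps on the new vertex and edge spaces. Both conditions follow from the clean structure inherited from $A$ together with the absence of immersed Klein bottles in $X$, which is the same hypothesis already used upstream to secure the product structure of the ends $R_i$.
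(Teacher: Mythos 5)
The reductions in your first two steps are logically valid (the composite $\Omega \hookrightarrow K\cap P \longrightarrow T$ equals the inclusion $\Omega \hookrightarrow T$, so injectivity of the latter on $\pi_1$ forces injectivity of $\Omega \hookrightarrow K\cap P$, which after the deformation retraction is what the claim asserts). However, the reduction actively discards the structure the paper uses: it trades the infinite cover $P$, which sits in a diagram of covering maps over a generalised Baumslag--Solitar complex $S_0 \lrar T_0 \subseteq X$, for the quotient complex $T$, where that covering structure is no longer directly visible.

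The real gap is in your third step, and you yourself flag it as ``the main obstacle'' without closing it. The graph of spaces structure that $T$ inherits from $A$ has underlying graph a subgraph of $\Gamma(A)$, with vertex spaces the relevant vertex graphs and edge spaces circles. The graph $\Omega$ is a \emph{transversal} cross-section of $T$, running across each cylinder along an arc $\{pt\}\times I$; it does not embed into the underlying graph of $T$'s decomposition, so it cannot be a subgraph of groups of $T$ in the sense of \cref{def:subgraph}, and \cref{lem:subgraph_inject} does not apply. Your proposed fix is to re-decompose $T$ over the graph $\Omega$ itself (with vertex and edge spaces the transverse arcs together with the vertex-graph segments they connect), but this produces a genuinely new decomposition of $T$ whose validity must be checked, not a ``refinement'' of the inherited one — and the compact identifications inside $K$ noted in \cref{fig:periph} (distinct horizontal lines of the periphery may intersect there) are precisely the point at which the naive product structure $T \cong \Omega\times S^1$ breaks. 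Asserting that cleanliness of $A$ plus absence of immersed Klein bottles handles this is a hope, not an argument: cleanliness controls the edge maps of $A$, and the no-Klein-bottle hypothesis gives the two ends $R_i \cong \R_{>0}\times\Omega$, but neither directly rules out the identifications inside $K$ that obstruct the subgraph-of-groups decomposition you want.

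The paper's proof sidesteps all of this. It produces a cellular immersion $\R\times\Omega \lrar P$ that is an isomorphism outside a compact set, observes this immersion lies over $S_0 \lrar T_0$ where $S_0$ is the GBS complex associated to the loop in $\Phi_X$ and $T_0$ its image in $X$, and then deduces $\pi_1$-injectivity of $\R\times\Omega\lrar P$ from the chain: coverings are $\pi_1$-injective, and $\pi_1(S_0) \lrar \pi_1(X)$ is injective by \cref{lem:embedGBS}. Since $\partial R_i\hookrightarrow \R\times\Omega$ is a $\pi_1$-isomorphism, the claim follows. You should either argue along these lines or genuinely construct the alternative graph of spaces decomposition of $T$ with the required compatibility, verifying that the identifications inside $K$ do not destroy it.
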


        \begin{proof}
            First note that there is a cellular immersion $\R \times \Omega \lrar P$, which is an isomorphism outside of a compact set (see \cref{fig:periph}). The immersion fits into the commutative diagram
            \[
                \begin{tikzcd}
                    \R \times \Omega \arrow[r] \arrow[d] & P \arrow[d] \\
                    S_0 \arrow[r]                        & T_0        
                \end{tikzcd}
            \]
            where the vertical maps are covering spaces, $S_0$ is the graph of spaces of a generalised Baumslag--Solitar group, and $T_0$ is its image in $X$. Covering maps are $\pi_1$-injective and so is the map $S_0 \lrar T_0$ by \cref{lem:embedGBS}. Hence, $\R \times \Omega \lrar P$ is $\pi_1$-injective. Since $\partial R_i \longhookrightarrow \R \times \Omega$ is a $\pi_1$-isomorphism for $i = 1,2$, it follows that the maps $\partial R_i \longhookrightarrow P$ are $\pi_1$-injective. \renewcommand\qedsymbol{$\diamond$} \qedhere
        \end{proof}

        \begin{claim}\label{claim:periph}
            The quotient map $q \colon Z \longrightarrow A$ is $\pi_1$-injective and $q_*(\pi_1(Z))$ is $L^2$-independent in $\pi_1(A)$.
        \end{claim}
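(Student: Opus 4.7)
My plan is to deduce both the $\pi_1$-injectivity of $q$ and the $L^2$-independence of $q_*(\pi_1(Z))$ by exhibiting $q_*(\pi_1(Z))$ as a retract (in fact a free factor) of $\pi_1(A)$. The key structural observation is the identification
\[
    \pi_1(A_v)\;\cong\;\pi_1(Z_v)*F_{k_v}
\]
for each vertex $v$ of $\Upsilon$, where $F_{k_v}$ is freely generated by the closed-up rays at $v$, and each such free generator is precisely the image in $\pi_1(A_v)$ of a generator of the corresponding strip edge group $A_e\cong\Z$. The preparatory choices play an essential role here: the absence of immersed Klein bottles together with the choice of sufficiently large $K$ and $n$ ensure that every periphery is an embedded product outside a compact region, so that closing up a ray attaches a single embedded loop to the compact core $K_v$ at a single point, and the different closed-up rays contribute independent free generators.

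With this identification, $\pi_1$-injectivity of $q_*$ is immediate from \cref{lem:subgraph_inject}: $Z$ is a subgraph of groups of $A$ in the sense of \cref{def:subgraph}, the vertex inclusions $\pi_1(Z_v)\hookrightarrow\pi_1(A_v)$ are free-factor inclusions, the edge inclusions are either $\Z=\Z$ (for original edges) or $1\hookrightarrow\Z$ (for strip edges), and the required intersection identity holds because on strip edges $A_e$ is generated by a free generator of $F_{k_v}$ disjoint from $\pi_1(Z_v)$.

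To upgrade this to a free-factor statement, I would construct a retraction $r\colon\pi_1(A)\lrar\pi_1(Z)$ using a spanning-tree presentation of $\pi_1(A)$: declare $r$ to be the identity on each $\pi_1(Z_v)$ and on each stable letter, and kill every free generator of every $F_{k_v}$. The edge relations of $\pi_1(A)$ from original edges already live inside $\pi_1(Z)$ and are preserved, while those from strip edges (either $a_{e,\o}=a_{e,\t}$ in the tree case or $t_e a_{e,\t}t_e^{-1}=a_{e,\o}$ otherwise) become the trivial equality $1=1$ under $r$. Hence $r$ is well-defined, $r\circ q_*=\id_{\pi_1(Z)}$, and one reads off a free product decomposition $\pi_1(A)\cong q_*\pi_1(Z)*F$ for some free group $F$ (whose rank can be computed but is irrelevant for our purposes).

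For $L^2$-independence, set $G=\pi_1(A)$ and $H=q_*\pi_1(Z)$. Viewing the splitting $G=H*F$ as an HNN tower over $H$ with trivial edge groups, the Bass--Serre tree contributes a short exact sequence of $K[G]$-modules $0\lrar K[G]^{\rk F}\lrar K[G/H]\lrar K\lrar 0$, which by \cref{augiso}(c) exhibits $I_G/I_H^G$ as a free $K[G]$-module; in particular $b_1^{K[G]}(I_G/I_H^G)=0$. A Chiswell Mayer--Vietoris computation for $A$ (whose vertex groups are free and whose edge groups are cyclic) gives $b_2^{K[G]}(G)=0$, so \cref{prop:L2indep2} applies to conclude that $H$ is $L^2$-independent in $G$. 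The main obstacle will be the first paragraph: carefully justifying the wedge-like description of each $A_v$ and confirming that the free generators of $F_{k_v}$ coming from distinct strip endpoints are truly independent, which is precisely what the prior preparatory lemmas were designed to guarantee.
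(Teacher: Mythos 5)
There is a genuine gap: the claimed identifications $\pi_1(A_v)\cong\pi_1(Z_v)*F_{k_v}$ (with the images of the strip-edge groups as distinct free generators complementary to $\pi_1(Z_v)$) and, globally, $\pi_1(A)\cong\pi_1(Z)*F$ are false in general. The problem arises exactly when a periphery $P$ has $\pi_1(\Omega)\cong\Z$, which is precisely the case the paper singles out and which the ``no immersed Klein bottles'' reduction was set up to control but not to eliminate. If two strips $e,e'$ in a common periphery attach to $Z_v$ along lines whose images in $Z_v$ agree near infinity, then after quotienting by $n\Z$ the two loops $a_{e,\o}$, $a_{e',\o}$ share a common arc in $A_v$, and the product $a_{e,\o}\,a_{e',\o}^{-1}$ lies in $\pi_1(Z_v)$ (it is carried by the compact core). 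Hence the $a_{e,\bullet}$ are not independent of $\pi_1(Z_v)$, your vertex free-product claim fails, and your proposed retraction $r$ is not well-defined: with $r(a_{e,\o})=1$ you are forced into $r(a_{e',\o})=a_{e',\o}\,a_{e,\o}^{-1}\neq 1$. One can also see that the global free-product claim is incompatible with the HNN description $\pi_1(A)\cong\pi_1(Z)*_{\pi_1(\Omega_1),\dots,\pi_1(\Omega_n)}$ via Euler characteristics: if some $\pi_1(\Omega_i)\cong\Z$, then $\chi(\pi_1(Z)*_{\Z})=\chi(\pi_1(Z))$, whereas $\chi(\pi_1(Z)*F_k)=\chi(\pi_1(Z))-k$, and the two disagree for $k\geqslant 1$.

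The paper's proof deliberately avoids this by not claiming a free-product structure. It observes that the quotient $q$ identifies $\partial R_1\cong\partial R_2\cong\Omega_i$ for each periphery, so $\pi_1(A)$ is a multiple HNN extension of $\pi_1(Z)$ over the subgroups $\pi_1(\Omega_i)$; the balanced-and-solvable assumption on $\Phi_X$ (used to rule out non-abelian generalised Baumslag--Solitar subgroups) forces $\pi_1(\Omega_i)$ to be trivial or $\Z$. It then applies \cref{lem:MV}, which is designed precisely to handle cyclic (not merely trivial) edge groups via $b_1^{(2)}(\Z)=0$. Your computation at the end (using $b_1^{K[G]}(I_G/I_H^G)=0$, $b_2^{(2)}=0$, and \cref{prop:L2indep2}) is fine in spirit and would in fact go through if you replaced the free-product input $I_G/I_H^G\cong K[G]^{\rk F}$ with the correct HNN input $I_G/I_H^G\cong\bigoplus_i K[G/\pi_1(\Omega_i)]$ and used $b_1^{(2)}(\Z)=0$; but as written the argument is founded on a structure theorem that does not hold.
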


        \begin{proof}
            Denote the peripheries of $Z$ by $P_1, \dots, P_n$ and for each $i = 1, \dots, n$ let $\Omega_i$ be the finite graph such that there is an immersion $\R \times \Omega_i \lrar P_i$. Since the peripheries are subgraphs of spaces of $Z$, the groups $\pi_1(\Omega_i) \cong \pi_1(\partial R_i)$ embed in $\pi_1(Z)$ by the previous claim. The quotient map induces an inclusion
            \[
                q_* \colon \pi_1(Z) \longrightarrow \pi_1(Z)*_{\pi_1(\Omega_1), \dots, \pi_1(\Omega_n)} \cong \pi_1(A), 
            \]
            where $\pi_1(Z)*_{\pi_1(\Omega_1), \dots, \pi_1(\Omega_n)}$ denotes the multiple HNN extension of $\pi_1(Z)$ over the subgroups $\pi_1(\Omega_i)$. 
            
            The assumption that $\Phi_X$ is balanced and solvable implies that $G$ does not contain any non-abelian generalised Baumslag--Solitar subgroups. Hence, $\pi_1(\Omega_i)$ is either trivial or isomorphic to $\Z$. To see this, note that every periphery $P_i$ covers a generalised Baumslag--Solitar subcomplex $V_i \subseteq X$, and therefore $\pi_1(V_i)$ cannot contain a nonabelian free subgroup. So from the fact that $\pi_1(\Omega_i) \leqslant \pi_1(V_i)$ we deduce that $\pi_1(\Omega_i)$ is either trivial or $\Z$. The vertex group of a multiple HNN extension along trivial or infinite-cyclic subgroups is $L^2$-independent by \cref{lem:MV}.\renewcommand\qedsymbol{$\diamond$} \qedhere
        \end{proof}
    \end{step}

    \begin{step}[$L^2$-independence of vertex completion]
        As remarked in the previous step, $A$ is a clean compact graph of graphs with $S^1$ edge spaces. Moreover, since the $\Z$ action on the strips was by covering translations, it follows that there is a natural quotient map $A \longrightarrow X$ whose restriction to every vertex space of $A$ is an immersion. By adding edges to the vertex spaces of $A$, we can complete them to coverings of the corresponding vertex spaces of $X$. Denote the complex obtained from $A$ in this way by $B$ and note that $\pi_1(B) \cong \pi_1(A) * F$, where $F$ is a free group. Then $\pi_1(A)$ (and therefore $H$) is $L^2$-independent in $\pi_1(B)$.
    \end{step}

    \begin{step}[Passing to finite-index and completing to a cover]
        In \cite[Theorem 5.2, Steps 5 and 6]{Wise_subgroupSep}, Wise shows how to pass to a finite-sheeted cover $B_1 \longrightarrow B$ which can be completed to a finite-sheeted cover $X_1 \longrightarrow X$ by attaching cylinders to $B_1$.

        \begin{claim}
            $H \cap \pi_1(B_1)$ is $L^2$-independent in $\pi_1(B_1)$.
        \end{claim}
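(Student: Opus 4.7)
The plan is to apply part \ref{item:overgroupJZ} of \cref{overgroups}, with the triple $(G, G_1, H)$ of that theorem taken to be $(\pi_1(B), \pi_1(B_1), H)$. Two of its three hypotheses are immediate: $\pi_1(B_1)$ is of finite index in $\pi_1(B)$ because $B_1 \lrar B$ is a finite-sheeted covering, and Step 4 has already shown that $H$ is $L^2$-independent in $\pi_1(B)$. What remains is to verify that $\b_2(H) = 0$.

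For this I pass back to the ambient group $G$, which contains $H$. Since $X$ is a clean $2$-dimensional aspherical complex, $\cd(G) \leqslant 2$. Applying Chiswell's Mayer--Vietoris sequence (\cref{thm:MV}) to the splitting of $G$, together with the facts that free groups have $\b_i = 0$ for all $i \geqslant 2$ and that cyclic groups (being amenable) have $\b_i = 0$ for every $i \geqslant 1$ (\cref{prop:HFprops}\ref{item:freebetti}), yields $\b_2(G) = 0$. \cref{lem:topvanish} then forces $\b_2(H) = 0$, as required.

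Part \ref{item:overgroupJZ} of \cref{overgroups} now delivers the claim: $H \cap \pi_1(B_1)$ is $L^2$-independent in $\pi_1(B_1)$. I do not expect any genuine obstacle at this point; the real geometric content of the argument was packaged into Steps 3 and 4 (the periphery closing and vertex completion), while the descent to finite-index subgroups is handled formally by the abstract overgroup/intersection principle already established in \cref{overgroups}.
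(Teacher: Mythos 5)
Your proof is correct, but it takes a genuinely different route from the paper. The paper argues directly from the graph-of-groups structure: since $\pi_1(A)$ is a free factor of $\pi_1(B) = \pi_1(A) * F$, Kurosh's theorem shows $\pi_1(A) \cap \pi_1(B_1)$ is a free factor of $\pi_1(B_1)$ and hence $L^2$-independent there; and because $\pi_1(A)$ is a multiple HNN extension of $H$ over trivial or infinite-cyclic subgroups (as established in the proof of the periphery claim), the finite-index subgroup $\pi_1(A) \cap \pi_1(B_1)$ inherits such a splitting with $H \cap \pi_1(B_1)$ as a vertex group, so \cref{lem:MV} gives $L^2$-independence there too, and transitivity finishes. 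You instead invoke the abstract intersection principle \cref{overgroups}\ref{item:overgroupJZ}, which obliges you to verify $\b_2(H) = 0$. Your verification is fine: $\cd(G) \leqslant 2$ since the clean complex $X$ is a two-dimensional $K(G,1)$, Chiswell's Mayer--Vietoris sequence together with \cref{prop:HFprops}\ref{item:freebetti} gives $\b_2(G) = 0$, and \cref{lem:topvanish} then pushes this down to all subgroups. (One minor point of care: \cref{lem:topvanish} is stated with $\cd_K(G) = n$, so if $\cd(G) = 1$ you should instead note $G$ is free, hence so is $H$, hence $\b_2(H) = 0$ trivially; but the proof of the lemma in fact only uses the existence of a length-$n$ free resolution, so $\cd_K(G) \leqslant n$ suffices.) Your approach has the virtue of showcasing that the overgroup theorem really does all the finite-index bookkeeping at once, whereas the paper's argument is more hands-on and avoids the $\b_2$ hypothesis by working inside the explicit splitting. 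Both are legitimate; the paper's is perhaps preferable only in that it reuses structure already exhibited in the preceding two steps rather than importing a fresh vanishing argument.
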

        \begin{proof}
            Since $\pi_1(A)$ is a free factor of $\pi_1(B) \cong \pi_1(A) * F$, we have that $\pi_1(A) \cap \pi_1(B_1)$ is a free factor of $\pi_1(B_1)$ by Kurosh's theorem \cite[Th\'eor\`eme 14, Chaptire I, \S5]{Serre_arbres}. So  $\pi_1(A) \cap \pi_1(B_1)$ is $L^2$-independent in $\pi_1(B_1)$ and hence it suffices to prove that $H \cap \pi_1(B_1)$ is $L^2$-independent in $\pi_1(A) \cap \pi_1(B)$.
    
            The proof of \cref{claim:periph} shows that $\pi_1(A)$ has a graph of groups decomposition with underlying graph a rose, where the unique vertex group is $H$ and the edge groups are either trivial or $\Z$. Then $\pi_1(A) \cap \pi_1(B_1)$ also has a graph of groups decomposition with edge groups either trivial or $\Z$, and $H \cap \pi_1(B_1)$ is a vertex group in this decomposition. By \cref{lem:l2subgp}, $H \cap \pi_1(B_1)$ is $L^2$-independent in $\pi_1(A) \cap \pi_1(B_1)$. \renewcommand\qedsymbol{$\diamond$} \qedhere
        \end{proof}
    
        As mentioned above, Wise shows that we can attach cylinders to $B_1$ to obtain a finite-sheeted covering $X_1 \longrightarrow X$. Therefore $\pi_1(X_1)$ is a multiple HNN extension of $\pi_1(B_1)$ over cyclic subgroups, so $\pi_1(B_1)$ (an thus $H \cap \pi_1(B_1)$ as well) is $L^2$-independent in $\pi_1(X_1)$. In summary, $H\cap \pi_1(B_1)$ has finite index in $H$ and $H\cap \pi_1(B_1)$ is $L^2$-independent in $\pi_1(X_1)$, which has finite index in $\pi_1(X)$. We conclude that $G$ has the $L^2$-Hall property by \cref{cor:overgroups}. \qedhere
    \end{step}
\end{proof}

\section{\texorpdfstring{$L^2$}{L²}-Hall property for limit groups} \label{sec:ICE}

Wilton \cite{WiltonHall} proved that limit groups have the local retractions property (and hence that they are subgroup separable) using Kharlampovich and Miasnikov's \cite{KharlampovichMyasnikov1998} characterisation of limit groups in terms of {\it ICE groups} (see \cref{def:ICE} below). Limit groups are exactly the finitely generated groups that arise as subgroups of ICE groups. Since the local retractions property passes to subgroups, Wilton only needs to deal with ICE groups in \cite{WiltonHall}. Analogously, we certified that the $L^2$-Hall property passes to subgroups in \cref{lem:l2subgp}, so it also sufficient to deal with ICE groups in order to prove \cref{thm:B}. Our argument is different from that of \cite{BrownKar2023quantifying} and we expect it to be flexible enough to include more general finite abelian hierarchies of relatively hyperbolic groups as in \cref{conj:specialyHypSHNC}.

Just as we followed Wise's argument on subgroup separability of balanced graphs of free groups in the previous section, here we follow the ideas developed by Wilton \cite{WiltonPrehall, WiltonHall} on the subgroup separability of limit groups. 

\begin{defn}\label{def:ICE}
    Let $G$ be a group and let $Z \leqslant G$ be the centraliser of an element. The group $G *_Z (Z \times \Z^n)$ is an \textit{extension of $G$ by a centraliser}. A group is an \textit{ICE group} (standing for ``iterated centraliser extension") if it can be obtained from a finitely generated free group by a finite sequence of extensions by a centraliser.
\end{defn}

If $G$ is an ICE group, then it has a classifying space $X$ that can be described as follows. If $G$ is finitely generated and free, then take $X$ to be a bouquet of circles. Otherwise, write $G = H *_Z (Z \times \Z^n)$ for simpler ICE group $H$. It is not hard to show that we may assume that $Z$ is infinite cyclic (see \cite[Remark 1.14]{WiltonHall}). Then take $X$ to be the graph of $Y$ and $T^{n+1}$ with edge group $S^1$, where $Y$ is the classifying space of $H$ constructed by induction, and $S^1$ maps to a loop representing the centralised element in $H$ and to a coordinate circle in $T^{n+1}$. The spaces obtained in this way will be called \textit{ICE spaces}. We refer the reader to \cite[Section 1.6]{WiltonHall} for a concise survey of this material. We emphasise the following important theorem of Kharlampovich and Miasnikov, which gives a powerful characterisation of limit groups.

\begin{thm}[{\cite{KharlampovichMyasnikov1998}}]\label{thm:limit_are_subgp_of_ICE}
    A finitely generated group $G$ is a limit group if and only if it is a subgroup of an ICE group.
\end{thm}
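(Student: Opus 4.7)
The plan is to prove each direction of the equivalence separately, using the standard characterisation of limit groups as finitely generated fully residually free groups: a finitely generated group $L$ such that for every finite subset $S \subseteq L$ there exists a homomorphism to a free group that is injective on $S$.

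First, I would show that every (finitely generated) subgroup of an ICE group is a limit group. Since being fully residually free passes to subgroups, it suffices to show that every ICE group is fully residually free. I would induct on the length of the centraliser hierarchy. Free groups are trivially fully residually free. For the inductive step, suppose $H$ is fully residually free and let $Z = \langle z \rangle \leq H$ be a maximal cyclic subgroup so that $G = H *_Z (Z \times \Z^n)$ is the next step of the ICE construction. Every element of $G$ has a normal form as an alternating product of syllables from $H$ and $Z \times \Z^n$. For any choice of large positive integers $N_1, \ldots, N_n$, there is a retraction $\rho_N \colon G \to H$ which is the identity on $H$ and sends a chosen basis of the new $\Z^n$-factor to $z^{N_1}, \ldots, z^{N_n}$. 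A "big powers" argument of Baumslag shows that for any finite $S \subseteq G$ one may choose $N_1, \ldots, N_n$ so that $\rho_N$ is injective on $S$; composing with a test homomorphism $H \to F$ injective on $\rho_N(S)$, which exists by the inductive hypothesis, completes the argument.

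Second, and much harder, is the embedding direction: every limit group embeds in an ICE group. My approach would be induction on a suitable complexity measure (such as the height of the Makanin–Razborov diagram) using the hierarchy of limit groups. A non-abelian, non-free limit group $L$ admits a non-trivial abelian JSJ decomposition whose vertex groups are either abelian or limit groups of strictly lower complexity, which by induction embed into ICE groups. One would then realise the splitting of $L$ inside a centraliser extension of an inductively constructed ICE group: each abelian vertex group of $L$ must become a subgroup of an extended centraliser, while the abelian edge groups must match up consistently with the inductive embeddings chosen for the adjacent rigid vertex groups.

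The main obstacle is precisely this assembly step. Having each vertex group embed into an ICE group is not enough to build an embedding of $L$: one must choose the individual embeddings so that (i) the edge group identifications agree after embedding, and (ii) the image of each abelian vertex group lies inside the centraliser of the element that gets centralised at the correct stage, which forces one to track maximality of centralisers throughout the induction. Handling these compatibility constraints is the technical heart of the Kharlampovich--Myasnikov proof via algebraic geometry over free groups, and is the analogue of Sela's shortening argument in his parallel development of the theory of limit groups.
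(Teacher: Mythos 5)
The paper does not prove this statement; it is cited directly from Kharlampovich--Myasnikov, so there is no proof in the paper against which to compare. What you have written is therefore best assessed on its own merits as a proof sketch.

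Your argument for the direction ``subgroup of an ICE group $\Rightarrow$ limit group'' is essentially correct and standard. Fully residual freeness passes to subgroups, so it suffices to show that ICE groups are fully residually free, and the inductive step via retractions $G = H *_Z (Z \times \Z^n) \to H$ sending the new abelian generators to high powers of $z$ is precisely Baumslag's big powers argument (one needs the version for fully residually free $H$, not just free $H$, to run the induction, but this is known and not a serious obstruction). The one small imprecision is that $Z$ should be taken to be the centraliser of an element (hence a maximal abelian subgroup), not an arbitrary maximal cyclic subgroup, though as noted in the paper via \cite[Remark 1.14]{WiltonHall} one can always reduce to the cyclic case.

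For the converse direction, your proposal is an outline rather than a proof, and you say as much. The plan of inducting on a hierarchy and assembling vertex-group embeddings has the right flavour, but the assembly step you flag---matching edge-group identifications and placing abelian pieces inside centralisers of the correct elements---is exactly where a naive JSJ induction breaks down, and Kharlampovich--Myasnikov resolve it by a substantially different route: they embed a limit group into the coordinate group of an NTQ system (built from the Makanin--Razborov diagram) and then verify that NTQ coordinate groups are ICE groups. That machinery cannot be reconstructed from the sketch you give. Since the paper treats this as a black-box citation, an incomplete sketch here is not a defect relative to the paper, but it should not be mistaken for a proof of the embedding direction.
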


\begin{defn} \label{defnind}
    A collection of elements $\mathcal L$ in a group $G$ is {\it independent} if $g$ commutes with no conjugate of $h$ for all pairs of distinct elements $g, h\in \mathcal L$. We also say that a collection $\mathcal L$ of loops in a space $X$ is independent if they represent an independent collection of elements of $\pi_1(X)$ in the previous sense. 
\end{defn}

\subsection{From graphs of free groups to limit groups}
Before going into details of the work of Wilton, we first revisit   Wise's argument from \cref{sec:GofFG} to explain what are the main difficulties involved when dealing with ICE groups. We should remark that, in the context of limit groups, the process of getting virtually clean covers is hidden in the inductive argument and will not be mentioned again.  

We denote by $X$  a graph of spaces whose underlying graph has two vertices and one edge. We either have that the vertex spaces are graphs, as in Wise's setting, or an ICE space $Y$ and a torus $T^n$, which is the case of interest of this section. The edge space of $X$ is homeomorphic to a circle and the edge maps are assumed to be injective.   Let $H$ be a finitely generated subgroup of $\pi_1(X)$ and let $X_H\lrar X$ be the covering corresponding to $H$. Scott's criterion \cite[Lemma 1.3]{WiltonHall} topologically reformulates the subgroup separability of $\pi_1(X)$ as the ability to complete the precover $X_H$ of $X$ to a finite-sheeted cover $\hat{X}\lrar X$ so any prescribed finite subcomplex $\Delta$ of $X_H$ projects homeomorphically into $\hat{X}$.
\begin{itemize}
    \item The problem of ``pruning'' in Wise's argument  corresponds to taking a core $X'$ of $X_H$ that contains $\Omega$, the compact cores of the fundamental groups of each vertex space of $X_H$ and all the infinite degree elevations of edge maps (i.e.~the infinite strips). An important property of those elevations was that they escape any compact subset of the free splitting of the vertex space they belong to (because they act freely on the vertex graph).  This property is called ``properness'' (as introduced in \cite[Definition 2.12]{WiltonHall}) and is not satisfied by \textit{elliptic} loops. Hence in this setting the pruning must be performed more carefully. Conveniently, as a consequence of the 2-acylindricity of the Bass--Serre tree of an ICE group, non-elliptic (i.e. \textit{hyperbolic}) loops are proper \cite[Lemma 2.16]{WiltonHall}. 
    \smallskip
    \item The second part of Wise's argument, which consists in closing up the infinite strips, is another step towards obtaining a finite-sheeted precover from the precover $X'$ (since, after this, the preimages of points in the edge spaces are finite).  However, one still has to figure out how to complete the precover so preimages of points in the vertex spaces are finite as well. This corresponds to the problem of extending finite-sheeted covers of the edge spaces to finite-sheeted covers of the vertex spaces themselves. In this setting, this relies on a primitive version of omnipotence of free groups. This way, one constructs the precover $W \lrar X'$.  Slightly more general conditions are offered  in \cite[Section 3.2]{WiltonPrehall} in terms of homological assumptions on the edge spaces. However, for the case when $Y$ is an ICE space and the edge subspace is generic, this problem is resolved in \cite{WiltonHall} by strengthening the inductive hypothesis (incorporating the notion of \textit{tameness}), so to have the required control on the prescribed collection of infinite-degree elevations. 
    \smallskip
    \item Lastly, the finite intermediate precover $W$ is shown to admit a finite-sheeted covering $W_m\lrar W$ that can be completed to a finite-sheeted covering $X_m\lrar X$. This is done similarly in Wilton's argument when $Y$ is an ICE space using the inductive hypothesis, without passing to a deeper $W_m$.
\end{itemize}
The second point above explains Wilton's observation \cite[Section 3]{WiltonHall} that the properties of local retractions of subgroup separability are not strong enough to serve as an induction hypothesis. We notice a similar problem. Following Scott's philosophy, the $L^2$-Hall property concerns the ability to complete precovers of $X$ to finite-sheeted covers preserving the $L^2$-homology in the process. Our following example gives another reason for why this is not strong enough for an inductive argument either. 

\begin{ex} \label{L2tamex}
    Consider $G=\pi_1(\Sigma_2) = \langle a, b, c, d \mid [a, b]=[c, d]\rangle$, which splits as $F(a, b)*_{[a,b]=[c, d]}F(c, d)$. We consider the $L^2$-independent subgroups $H\leq F(a, b)$ and $K\leq F(c, d)$ given by $H=F(a^2, b^2)$ and $K=F(c^2, d^2)$. It is clear that the induced map $H*K\lrar G$ is injective, although it is not $L^2$-injective for the obvious reason that $\b_1(H*K)=3>2=\b_1(G)$.
\end{ex}

 \cref{L2tamex}  illustrates that subgraphs of groups that are $L^2$-injective on vertex groups need not be $L^2$-injective overall, and so one needs some control on the non-trivial $L^2$-classes that have non-trivial support on multiple vertex spaces. Wilton's notion of tameness \cite[Definition 3.1]{WiltonHall} and \cref{L2tamex} motivates the following notion of $L^2$-tameness that allows us to inductively have such control.

\begin{defn}\label{L2tame}
    Consider a complex $X$, a covering $X'\lrar X$ and a finite (possibly empty) collection of independent (\cref{defnind}), essential loops $\mathcal L=\{\delta_i\po C_i\lrar X\}.$ The cover $X'$ is \textit{$L^2$-tame over $\mathcal L$} if the following holds. Let $\Delta\subseteq X'$ be a finite subcomplex and let $\{\delta_j'\po C_j'\lrar X'\}$ be a finite collection of (pairwise non-isomorphic) infinite degree  elevations where each $\delta_j'$ is an elevation of some $\delta_i\in  \mathcal L$. Then for all sufficiently large positive integers $d$, there exists an intermediate finite-sheeted covering $X'\lrar \widehat{X}\lrar X$ that satisfies the following.
    \begin{enumerate}
        \item Every $\delta_j'$ descends to an elevation $\widehat{\delta}_j$ of degree $d$; 
        \item The $\widehat{\delta}_j$ are pairwise non-isomorphic; 
        \item The subcomplex $\Delta$ injects into $\widehat{X}$;  
        \item The induced map 
        \[\pi_1(X')*\left(\coprod_{\mathcal L}\Z\right)\lrar \pi_1(\widehat{X}),\]
        defined by mapping   each $\Z$ labelled by $i$ to the class of the image of $ \widehat{\delta}_i$, is injective and $L^2$-injective. 
    \end{enumerate}
\end{defn} 

\begin{rem}
    The $L^2$-tameness of all coverings $X'\lrar X$ with finitely generated $\pi_1(X')$ and empty $\mathcal L$ implies the $L^2$-Hall property for $\pi_1(X)$.
\end{rem}

As anticipated, the idea is that the strengthened version described in \cref{L2tame} (with additional prescribed data relative to $\mathcal L$) admits a proof by induction and avoids bad embeddings as the one described in \cref{L2tamex}.

\subsection{The proof of \texorpdfstring{\cref{thm:B}}{Theorem B}}

By the previous discussion, the following theorem implies \cref{thm:B} from the introduction, and its proof will occupy the remainder of this section. 

\begin{thm}\label{thm:L2tame}
    Let $X$ be an ICE space, let $H\leq  \pi_1(X)$ be a finitely generated subgroup and let $X_H\lrar X$ be the corresponding covering. Suppose that $\mathcal L$ is a (possibly empty) finite set of independent loops each generating a maximal abelian subgroup of $\pi_1(X)$. Then $X_H$ is $L^2$-tame over $\mathcal L$.
\end{thm}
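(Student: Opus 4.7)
The plan is to induct on the length of the ICE hierarchy used to build $X$. In the base case, $X$ is a wedge of circles and $\pi_1(X)$ is free; Hall's theorem provides a finite-index subgroup in which $H$ is a free factor, and a direct application of \cref{lem:MV} to the resulting free-product decomposition delivers an $L^2$-injective completion satisfying clauses (1)--(4) of \cref{L2tame} for any prescribed $\Delta$, any admissible family of elevations $\delta_j'$, and all sufficiently large $d$.

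For the inductive step, write $X = Y \cup_{S^1} T^{n+1}$ with $Y$ shorter in the hierarchy, and pull back the corresponding graph-of-spaces structure to $X_H$. Following Wilton's proof of tameness \cite[Sections 3--4]{WiltonHall} and paralleling \cref{thm:GOFGWCEG}, I would proceed in four steps. \emph{Pruning}: extract a compact subcomplex $Z \subseteq X_H$ that swallows $\Delta$, the chosen elevations $\delta_j'$, and compact cores of the vertex covers together with their proper (hyperbolic) strip elevations, using the 2-acylindricity result \cite[Lemma 2.16]{WiltonHall}. \emph{Closing up strips}: for $d$ large enough, quotient each non-compact edge strip by the depth-$d$ translation coming from its attaching torus, producing a compact precover $A \supseteq Z$; the inclusion $\pi_1(Z) \hookrightarrow \pi_1(A)$ is $L^2$-injective by the multiple-HNN argument of \cref{claim:periph} (via \cref{lem:MV}), since the peripheral edge groups are cyclic.

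\emph{Vertex completion}: apply the inductive hypothesis to each vertex cover of $A$ lying over a $Y$-vertex, choosing the auxiliary loop family at that vertex to consist of (a) the attaching circles of the cylinders produced in the previous step and (b) the preimages in that vertex of the original $\mathcal L$. Maximality of these as abelian subgroups of the relevant ICE vertex group follows from the fact that the loops in $\mathcal L$ generate maximal abelians in $\pi_1(X)$ and that centraliser extensions preserve maximality of the centralised element; independence follows from acylindricity of the Bass--Serre tree of $X$. The torus vertices are completed by a routine finite cover. \emph{Gluing}: attach cylinders as in the final step of \cref{thm:GOFGWCEG} to obtain a finite cover $\widehat X \to X$ with the required properties (1)--(3). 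To verify the $L^2$-injectivity clause (4), apply \cref{prop:monster} with $\mathcal W$ the graph of groups underlying the precover $A$, $\mathcal Z$ that underlying $\widehat X$, and $\mathcal L_v^{(0)}$ chosen so that exactly one loop per closed-up cylinder and per edge outside the spanning tree contributes to the free factor $\coprod_{\mathcal L^{(0)}\cup \mathcal L^{(t)}} \Z$; the hypotheses of \cref{prop:monster} hold at each vertex by the inductive $L^2$-injectivity supplied in Step~(iii).

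The main obstacle is Step~(iii): one must verify that the auxiliary loop family chosen at each vertex really satisfies the maximal-abelian and independence hypotheses of \cref{L2tame}, and that the individual $L^2$-injective vertex completions can be assembled coherently through \cref{prop:monster}. This demands careful bookkeeping of which elevations are absorbed into which vertex, in the spirit of Wilton's disparity arguments in \cite[Section 3]{WiltonHall}, together with the additional constraint that the resulting free-product factors at each vertex match the combinatorial setup of $\mathcal L_v$, $\mathcal L_v^{(0)}$ in \cref{prop:monster}. Once this bookkeeping is in place, the $L^2$-injectivity propagates through the whole construction and the induction closes.
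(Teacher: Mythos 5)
Your overall strategy matches the paper's: induct on the ICE hierarchy, pass to a compact core of $X_H$, close up the infinite edge elevations, apply the inductive hypothesis vertex by vertex, and assemble via \cref{prop:monster}, \cref{lem:MV}, and Wilton's completion to a finite cover. However, two points in your outline are genuine gaps, not just bookkeeping.

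First, for the \emph{closing-up step} you import the periphery-quotient construction from the proof of \cref{thm:GOFGWCEG} and invoke \cref{claim:periph}. But that quotient operation does not, by itself, give you clauses (1)--(2) of \cref{L2tame}: you need each $\delta'_j$ (and $\varepsilon'_j$) to descend to a full elevation of prescribed degree $d$, with this working simultaneously for all sufficiently large $d$, and the descended elevations must be pairwise non-isomorphic. The paper does not quotient strips here; it uses Wilton's disparity machinery --- \cite[Lemmas 2.23--2.24]{WiltonHall} --- to first replace the core $X'$ by a \emph{disparate} core $X''$ and then extend the partial hyperbolic elevations $\delta''_j$ to full circular elevations $\overline\delta_j$ of degree $d$ by adding material, not quotienting. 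Disparity is exactly what ensures those elevations can be closed to pairwise distinct degree-$d$ circles; without passing through it, the quotient approach gives you \emph{some} compact precover but not one with the calibrated degrees that make the $\coprod_{\mathcal L}\Z$ free-product factor land correctly in the target.

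Second, you correctly identify Step (iii) as the crux, but the difficulty is not merely verifying maximality and independence of the auxiliary loops at each vertex; it is producing the exact combinatorial data $(\mathcal L_v,\ \mathcal L_v^{(0)},\ E_T)$ and the bipartite orientation $\V_\o \sqcup \V_\t$ that \cref{prop:monster} demands, in a way that is compatible with the inductive hypothesis applied at the $\V_\o$ (free-group) vertices and with the torus vertices kept whole. In the paper's Step 3, the elliptic elevations $\widehat\varepsilon_j$ populate $\bigcup_v \mathcal L_v^{(0)}$, the hyperbolic ones become stable letters $t_e$ with $e \in \E(\Gamma)\smallsetminus E_T$, and the remaining edge generators $\phi_{\o,e}(z_e)$ outside $W_v$ fill out $\mathcal L_v \smallsetminus \mathcal L_v^{(0)}$; the spanning tree $T$ and $E_T$ come from the intermediate precovers $X'''$ built in \cite[Lemma 2.23]{WiltonHall}. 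Without this explicit dictionary the ``$L^2$-injectivity propagates'' conclusion is not established --- it is precisely the claim being proved.
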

\begin{proof}
    We proceed by induction on the complexity of the ICE space. The base of the induction is the case when $X$ is a graph, which is essentially the classical M. Hall theorem (see \cite[Corollary 1.8]{WiltonHall} for a precise proof). Now assume $X$ is an ICE space that decomposes as a graph of spaces with two vertices (a lower complexity ICE space $Y$ and a torus $T^n$) and one edge space homeomorphic to $S^1$. This naturally induces a graph of spaces structure for $X_H$, whose underlying graph we denote by $\Gamma(X_H)$. Each vertex space of this splitting is either a covering space of $Y$ or a covering space of the torus $T^n$. 
    
    Denote by $\{\delta_i \colon D_i \lrar X\}$ and $\{\varepsilon_i \colon E_i \lrar X\}$ the hyperbolic and elliptic loops of $\mathcal L$, respectively, relative to the splitting of $X$. 

    \setcounter{step}{0}
    \begin{step}[The precovers $X'$ and $X''$]
        Let $\Delta \subseteq X_H$ be a finite subcomplex and let $\{\delta_j^H\}$ and $\{\varepsilon_j'\}$ denote fixed sets of infinite degree elevations of hyperbolic and elliptic loops, respectively, in $\mathcal L$. We begin by taking a subcomplex $X'\subseteq X_H$ that satisfies the following conditions:
        \begin{enumerate}
            \item The subcomplex $X'$ is a core for $H$, i.e.~ $X'$ is a subgraph of spaces with finite underlying graph such that the induced map $\pi_1(X')\lrar \pi_1(H)$ is an isomorphism.
            \item The subcomplex $\Delta$ is contained in $X'$. 
            \item The image of each $\varepsilon_j'$ is contained in $X'$.
            \item Each infinite-degree elevation $\delta_j^H\po \R\lrar X_H$ restricts to a (possibly non-full) elevation $\delta_j'\po D_j'\lrar X'$, where $D_j'\subseteq \R$ is a finite union of compact intervals. 
        \end{enumerate}
        The subscripts $i$ and $j$ are different, indicating that there may be several elevations $\delta_j^H$ for each $\delta_i$, and likewise for $\varepsilon_i$. We will keep completing the precover $X'$ further to get some more intermediate precovers $X'\subset X''\subset \overline{X}\subset X_H$ of $X$. 
        
        From \cite[Lemma 2.24]{WiltonHall}, we can enlarge $X'$ to $X'' \subseteq X_H$ so that $X''$ still enjoys properties (1)---(4) listed above while additionally satisfying that the corresponding elevations $\delta_j''\po D_j''\lrar X''$ are disparate (in the sense of \cite[Definition 2.2]{WiltonHall}). In particular, the induced map 
        \begin{equation}
            \label{eq0} \pi_1(X'') \lrar \pi_1(X_H)
        \end{equation}
        is an isomorphism. 
    \end{step}

    \begin{step}[The precover $\overline X$]
        We shall not need the definition of disparity but, instead, we will explain how this condition is used to extend the precover $X''$ further. Recall that each $D_j''$ is the union of finitely many compact intervals and that $D_j''$ fits in the following commutative diagram:
        \[
            \begin{tikzcd}
                X''\ar[d, hook]         & D_j'' \arrow[d, hook]  \arrow[l, "\delta_j''"'] &  \\
                X_H \ar[d] & \widetilde{D_i}   \arrow[l, "\delta_j^H"']   \arrow[d] \\
                X & \arrow[l, "\delta_i"'] D_i \nospacepunct{.}
            \end{tikzcd} 
        \]
        
        For all sufficiently large positive integers $d$, there exists $\overline{D}_j\cong S^1$ such that $D_j'' \lrar D_i$ factors through an embedding   $D_j''\longhookrightarrow \overline{D}_j$ and a $d$-sheeted covering map $\overline{D}_j\lrar D_i$.
        By \cite[Lemma 2.23]{WiltonHall}, we can extend $X''$ to a precover $\overline{X}$ such that each $\delta_j''$ extends to a full elevation $\overline{\delta}_j \po \overline{D}_j \lrar \overline{X}$ and the diagram
        \[
            \begin{tikzcd}
                X''  \ar[d]         & D_j'' \arrow[d, hook]  \arrow[l, "\delta_j''"'] &  \\
                \overline{X} \ar[d] & \overline{D}_j \arrow[l, "\overline{\delta}_j"'] \arrow[r, "\cong"] \arrow[d] & S^1 \arrow[d, "\mathrm{deg} \ d"] \\
                X & \arrow[l, "\delta_i"'] D_i \arrow[r, "\cong"]  & S^1
            \end{tikzcd} 
        \]
        commutes. By possibly enlarging $\Delta$, we can assume that the images of the $\overline{\delta}_j$ are contained in $\Delta$.
        
        In the construction of \cite[Lemma 2.23]{WiltonHall}, one first enlarges $X''$ to $X'''$ by adding some simply connected vertex spaces of $X_H$  to obtain $X'' \longhookrightarrow X''' \longhookrightarrow X_H$. In particular, the induced map $\pi_1(X'')\lrar \pi_1(X''')$ is an isomorphism. Then, one considers a collection of pairs $(\phi_{k, \o}^H, \phi_{k, \t}^H)$ of edge maps $\phi_{k, \o}^H\po \R_{\o}\lrar X_H$ and  $\phi_{k, \t}^H\po \R_{\t}\lrar X_H$ which are elevations of the incident and terminal edge maps of some edge space $S_k^1$ of $X$. Furthermore, these pairs $(\phi_{k, \o}^H, \phi_{k, \t}^H)$ will have the property that these are not edge maps of $X'''$ (such elevations are usually called {\it hanging elevations} of the precover $X'''$, as in \cite[Remark 2.18]{WiltonHall}). For each $k$, we denoted by $\R_{\o}$ and $\R_{\t}$ the domains of  $\phi_{k, \o}^H$ and $\phi_{k, \t}^H$, respectively (which are the universal covers of $S_k^1$). We fix a deck transformation $\tau\po \R_{\o}\lrar \R_{\t}$ so that the natural  diagram  
        \[
        \begin{tikzcd}
           \R_{\o} \ar[rr, "\tau"] \ar[dr] & &  \R_{\t} \ar[dl] \\
            & S_k^1&
        \end{tikzcd}
        \]
        commutes. Then $\overline X$ is constructed from $X'''$ by adding the edge space $\R_{\o}$ with incident and terminal edge maps given by $\phi_{k, \o}^H$ and $\phi_{k, \t}^H\circ \tau$ for each $k$.
        
        We denote by $\Ga$ the underlying graph of the splitting of $\overline{X}$. Notice that the underlying graph of $X''$ may  be smaller. We set $E_T\subset \E(\Ga)$ to be the edges of $\Ga(X''')$. We enlarge the splitting of $X''$ by adding trivial vertex groups and just assume that its underlying graph is also $\Ga$. So we view $\pi_1(X'')$ as the fundamental group of a graph of groups $\mathcal W$ whose underlying graph is $\Ga$. We denote by $T$ a spanning tree of the underlying graph $\Ga(X''')$ of $X'''$. By construction, $T$ is also a spanning tree of $\Ga$. 
    \end{step}
\begin{step}[The finite-sheeted precover $\widehat{X}$]
    By \cite[Proposition 3.4]{WiltonHall}, there exists an intermediate finite-sheeted precovering $\overline{X}\lrar \widehat{X}\lrar X$ satisfying the following properties for all sufficiently large positive integers $d$:
    \begin{enumerate}
        \item The underlying graph of $\widehat{X}$ is $\Ga$.
        \item The subcomplex $\Delta$ projects homeomorphically into $\widehat{X}$. 
        \item Each $\varepsilon_j'$ descends to a full elevation $\widehat{\varepsilon}_j\po \widehat{E}_j \lrar \widehat{X}$ with $\widehat{E}_j\lrar E_i$ being a covering of degree $d$. 
    \end{enumerate}
    Since $\Delta$ injects into $\widehat{X}$, we already know that $\overline{\delta}_j$ descends to a full elevation $\widehat{\delta}_j\po \overline{D}_j\lrar \widehat{X}$. We want to apply \cref{prop:monster} and prove that the natural map 
    \[
        \pi_1(X'')*\left(\coprod_{\mathcal L}\Z\right)\lrar \pi_1(\widehat{X})
    \]
    is injective and $L^2$-injective. Before this, we need to introduce more notation.  There is a natural bipartite structure of $\Ga$   given by the bipartite structure of the splittings of ICE groups. More precisely, $\V(\Ga)$ is split into disjoint sets $\V_{\o}$ and $\V_{\t}$ so that, for all $e\in \E(\Ga)$, $\o(e)\in \V_{\o}$, and $X_{\o(e)}''$ is a covering of $Y$; and, similarly, $\t(e)\in \V_{\t}$ and $X_{\t(e)}''$ is a covering of the torus $T^n$. We denote by $\mathcal Z$ the graph of groups corresponding to $\pi_1(\widehat{X})$, whose underlying graph is $\Ga$. At the end of Step 2, we defined the graphs of groups $\mathcal W$ and $\mathcal Z$, the spanning tree $T\subset \Ga$ and the subset of edges $\E(T)\subset E_T\subset \E(\Ga)$. Denote by $\mathcal{L}^{(0)}$ the collection of elements of $\pi_1(\widehat{X})$ that are represented by the images of the elliptic loops $\{\widehat{\varepsilon}_j\}$. For each $v\in \V_{\o}$, we define $\mathcal L_v$ to be the subset of $Z_v$ that contains $\mathcal L_v^{(0)}$ and the elements $\phi_{\o, e}(z_e)$ such that $\phi_{\o, e}(z_e)\notin W_v$. By construction, it is not hard to see that, up to a homotopy of $\widehat{X}$, we have that:
    
    \begin{itemize}
        \item[(a)] The subset of $\pi_1(\widehat{X})$ represented by the images of $\widehat{\epsilon_j}$ is exactly $\bigcup_{v\in \V_{\o}}\mathcal{L}_v^{(0)}$.
        \item[(b)] The subset of $\pi_1(\widehat{X})$ represented by the images of $\widehat{\delta_j}$ is  \[\{t_e: e\in \E(\Gamma)\setminus E_T\},\] where we view $\pi_1(\widehat{X})$ as in \cref{defn:tree_pi1} (relative to the spanning tree $T$).
    \end{itemize}
    
    Before applying \cref{prop:monster}, we observe that we can ensure that $\widehat{X}$ satisfies an additional property, on top of the three listed above. Our inductive hypothesis implies that the complex $\overline{X}_v$ is $L^2$-tame relative to $\mathcal{L}_v$ for each $v\in \V_{\o}$. Henceforth, with the same construction as in \cite[Proposition 3.4]{WiltonHall}, and by adequately replacing the notion of tameness by our notion of $L^2$-tameness, we could have ensured that the finite-sheeted precover $\widehat{X}$ satisfies the following additional property:
    \begin{itemize}
         \item[(4)] For each $v\in \V_{\o}$,  the natural map 
     \[\pi_1(\overline{X}_v)*\left(\coprod_{\mathcal{L}_v}\Z\right)\lrar \pi_1(\widehat{X}_v)\]
    is injective and $L^2$-injective. 
    \end{itemize}
    
    By applying \cref{prop:monster} to the subgraph of groups $\mathcal W\leq \mathcal Z$ introduced above (and keeping in mind remarks (a) and (b) above), the induced map 
    \begin{equation} \label{eq1} 
        \pi_1(X'')* \left( \coprod_{\mathcal L } \Z\right)\lrar \pi_1(\widehat{X}),
    \end{equation}
    is injective and $L^2$-injective.
    \end{step}
    \begin{step}[The finite-sheeted cover $\widehat{X}^{+}$]
    Finally, $\widehat{X}$ can be extended to a finite-sheeted covering $\widehat{X}^{+} \lrar X$ by adding additional vertex spaces glued along cylinders by \cite[Proposition 3.7]{WiltonHall}. Hence $\pi_1(\widehat{X})$ is the vertex group of a cyclic splitting of  $\pi_1(\widehat{X}^{+})$ and, by \cref{lem:MV}, the injective map \begin{equation} \label{eq2}\pi_1(\widehat{X})\lrar \pi_1(\widehat{X}^{+})\end{equation} is $L^2$-injective. 
    \end{step}
    
    We have gathered all the ingredients to prove that the finite-sheeted cover $\widehat{X}^{+}$ satisfies the fourth point of the $L^2$-tame property, namely that the induced map \[\pi_1(X_H)*\left(\coprod_{\mathcal L}\Z\right)\lrar \pi_1(\widehat{X}^{+}),\] is injective and $L^2$-injective. This is a direct consequence of the fact that the maps described  in Equations (\ref{eq0}), (\ref{eq1}) and (\ref{eq2}) are injective and $L^2$-injective. \qedhere
\end{proof}

\section{The Strengthened Hanna Neumann Conjecture} \label{sec:L2SHNC}
The purpose of this section is to explain how to obtain the Strengthened Hanna--Neumann conjecture for all the groups of \cref{cor:C}, that is, limit groups and finite graphs of free groups with cyclic edge groups that are hyperbolic relative to virtually abelian subgroups. The results that we state here are well-known.  In \cite[Sections 8--12]{Jai22}, Antolín and Jaikin-Zapirain explain how the $L^2$-Hall property implies the Strengthened Hanna Neumann conjecture (SHNC) for the class of hyperbolic limit groups. Here we  review their argument implementing  recent work of Minasyan \cite{Minasyan_WZ} and Minasyan--Mineh \cite{MinasyanMineh_QC} that will show that the $L^2$-Hall property implies the SHNC for the groups of \cref{cor:HNGFGCEG}. This proves that \cref{cor:C} follows from Theorems \ref{thm:A} and \ref{thm:B}.

We retain the following convention: if $G$ is a graph of free groups with cyclic edge groups, then $X = (X_v, X_e; \Gamma)$ will denote a corresponding graph of graphs with $S^1$ edge spaces such that $G = \pi_1 X$. However, in this subsection, $G$ is not always assumed to be a graph of free groups.

Before describing under which circumstances the $\Dg$-Hall property implies the SHNC, we record other properties that will be important for this.  Recall that a group $G$ is said to have the \textit{Wilson--Zalesskii property} if $G$ is residually finite and
\[
    \overline{U \cap V} = \overline{U} \cap \overline{V}
\]
for all finitely generated subgroups of $G$, where the closures are taken in the profinite completion $\widehat G$ of $G$. 

\begin{prop}\label{prop:props_of_GFG_and_limit}
    Let $G = \pi_1(X)$ be a finite graph of free groups with cyclic edge groups such that $\Phi_X$ is balanced and solvable (in the sense of \cref{def:balanced}). Then $G$ is
    \begin{enumerate}[label=(\arabic*)]
        \item\label{item:relhyp} hyperbolic relative to virtually abelian subgroups; 
        \item\label{item:locrelQC} locally relatively quasiconvex; 
        \item\label{item:virtCS} virtually compact special;
        \item\label{item:cosetsep} double coset separable and therefore has the Wilson--Zalesskii property;
        \item\label{item:L2Hall} $L^2$-Hall.
    \end{enumerate}
    These conclusions also hold if $G$ is a limit group.
\end{prop}
\begin{proof}
     Let $G = \pi_1(X)$ be a graph of free groups with cyclic edge groups such that $\Phi_X$ is balanced and solvable. By the main result of Richer's masters thesis \cite[Main Theorem 1.2.5]{Richer_thesis}, $G$ is hyperbolic relative to the generalised Baumslag--Solitar subgroups associated to the components of $\Phi_X$ (see \cite[Section 2.2]{Richer_thesis} for this version of the statement). Since $\Phi_X$ is balanced and solvable, all the edge weights are $\pm 1$ and each component of $\Phi_X$ has first Betti number at most $1$. Therefore, the associated generalised Baumslag--Solitar subgroups are all virtually isomorphic to $\Z$ or $\Z^2$. Hence \ref{item:relhyp} follows. Now   \ref{item:locrelQC}   follows from \ref{item:relhyp} and \cite[Corollary D]{BigdelyWise_QC}.

    Property \ref{item:virtCS} can be collected from either \ref{item:relhyp} (together the main results of \cite{HsuWise_graphOfFree} and \cite{Reyes_relhypcub}) or from \cite[Corollary 2.3]{MinasyanMineh_QC}. Minasyan--Mineh proved that both limit groups and  subgroup separable graphs of free groups with cyclic edge groups are double coset separable \cite[Theorem 2.2]{MinasyanMineh_QC}. Minasyan proved that double coset separability implies the Wilson--Zalesskii property \cite[Corollary 1.2]{Minasyan_WZ}. This implies  \ref{item:cosetsep}. Finally, \ref{item:L2Hall}  is exactly \cref{thm:GOFGWCEG}. 
    
    On the other hand, when $G$ is a limit group, (1) and (2)  follow from \cite[Theorem 0.3 and Proposition 4.6]{Dahmani_combination}; (3) is proved in \cite{HsuWise_graphOfFree}; (4) was explained in the previous paragraph; and the $L^2$-Hall property of (5) is a consequence of \cite[Corollary 28]{BrownKar2023quantifying} or of \cref{thm:L2tame}. \qedhere
\end{proof}

For the SHNC to hold for a class of groups, we must first ensure that the sum over the double cosets is finite. By \cite[Theorem 9.4]{Jai22}, this is the case for limit groups. We restate this theorem in enough generality so as to include graphs of free groups with cyclic edge groups such that \(\Phi_X\) is balanced and solvable. We also include a sketch of the proof, since it is essentially identical to the one of Antolín--Jaikin-Zapirain.

\begin{thm}[{\cite[Theorem 9.4]{Jai22}}]\label{thm:finNonAb}
    Let $G$ a torsion-free group that is hyperbolic relative to a family of virtually abelian subgroups and suppose that $G$ is locally relatively quasi-convex. If $U, V \leqslant G$ are finitely generated subgroups and $T$ is a complete set of $(U,V)$-double coset representatives, then there are only finitely many $t \in T$ such that $U \cap V^t$ is not virtually abelian. In particular, the sum $\sum_{t \in T} \rchi(U \cap V^t)$ is finite.
\end{thm}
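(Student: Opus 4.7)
The plan is to use relative quasiconvexity of $U$ and $V$, combined with the fact that peripheral subgroups of $G$ are virtually abelian, to reduce the problem to a finiteness statement about double cosets whose intersection is non-parabolic. Since $G$ is locally relatively quasiconvex, both $U$ and $V$ are relatively quasiconvex subgroups of $G$. By a theorem of Mart\'inez-Pedroza, the intersection of two relatively quasiconvex subgroups in a relatively hyperbolic group is again relatively quasiconvex, so $U \cap V^t$ is relatively quasiconvex for every $t \in T$.

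Next I would invoke the following finiteness statement: for relatively quasiconvex subgroups $U, V$ of a relatively hyperbolic group, only finitely many double cosets $UtV$ have the property that $U \cap V^t$ is infinite and is not contained in any conjugate of a peripheral subgroup. This is a two-subgroup version of the finite relative height/width theorem for relatively quasiconvex subgroups (the relative analogue of the Gitik--Mitra--Rips--Sageev theorem), and follows from work of Hruska and Mart\'inez-Pedroza on relative quasiconvexity together with a standard double coset reduction.

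Since peripheral subgroups of $G$ are virtually abelian by hypothesis, any parabolic subgroup, and hence any subgroup of a parabolic, is virtually abelian. Thus, for all but finitely many $t \in T$, the intersection $U \cap V^t$ is virtually abelian (either because it is finite, or because it is infinite and parabolic). A direct computation shows that $\rchi(H) = 0$ for every virtually abelian group $H$ of finite type: a finite $H$ has $\chi(H) = 1/|H| > 0$, so $\rchi(H) = 0$; an infinite virtually abelian $H$ contains some $\Z^n$ with $n \geqslant 1$ as a finite-index subgroup, and multiplicativity of $\chi$ under finite index, together with $\chi(\Z^n) = 0$, gives $\chi(H) = 0$ and hence $\rchi(H) = 0$. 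Moreover, each $U \cap V^t$ is finitely generated (being relatively quasiconvex with finitely generated maximal parabolic subgroups) and therefore of finite type by hypothesis, so every term $\rchi(U \cap V^t)$ is a well-defined non-negative real number.

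Combining these ingredients, all but finitely many summands of $\sum_{t \in T} \rchi(U \cap V^t)$ vanish, and the remaining finitely many are finite, so the total sum is finite. The main obstacle is the finiteness statement invoked in the second paragraph: the literature on finite relative height is most often stated for conjugates of a \emph{single} relatively quasiconvex subgroup, so one must either cite an explicit two-subgroup version (e.g.~work of Mart\'inez-Pedroza--Wise, Hruska, or Yang) or derive the two-subgroup version directly, for instance by noting that $t \mapsto UtV$ bijects onto $U\backslash G / V$ and transferring the problem to the diagonal action of $U \times V$ on $G$, so that the classical one-subgroup height statement applies.
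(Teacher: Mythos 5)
The paper itself does not prove this statement; it cites \cite[Theorem 9.4]{Jai22} and simply asserts that the proof there works at this level of generality. Your reconstruction is structurally sound and almost certainly tracks what the cited proof does: pass from local relative quasiconvexity to relative quasiconvexity of $U$ and $V$, use closure of relative quasiconvexity under intersection, reduce to a finiteness statement for double cosets with infinite non-parabolic intersection, observe that parabolic subgroups are virtually abelian because the peripherals are, and finish with $\rchi(H)=0$ for virtually abelian $H$ of finite type (the Euler characteristic calculation and the finite generation/finite type bookkeeping are both correct). You also correctly isolate the hypothesis that all finitely generated subgroups are of finite type as being what makes $\rchi(U\cap V^t)$ well-defined.

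The one place where I would push back is your proposed route to the two-subgroup finiteness statement via ``the diagonal action of $U\times V$ on $G$.'' That map does parametrise double cosets, but it does not obviously allow you to import the \emph{one}-subgroup finite-height theorem: the relevant geometry lives in the coned-off (or cusped) space of $G$ and does not transport to any relatively hyperbolic structure on $U\times V$ or on $G$ viewed as a $(U\times V)$-set, so ``the classical one-subgroup height statement'' does not apply after this transfer. The honest statement of what is needed is the genuinely two-sided version: for relatively quasiconvex $U,V$ in a relatively hyperbolic $G$, only finitely many double cosets $UtV$ have $U\cap V^t$ infinite and non-parabolic. That statement is true and is what \cite{Jai22} leans on, but it should be cited directly (it appears in the bounded-packing/finite-width circle of results around Hruska--Wise and Yang, and in \cite{Jai22} it is sourced from the relatively quasiconvex literature) rather than derived from a one-subgroup theorem by a formal reduction that, as written, does not go through. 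With that reference substituted for the product-action heuristic, your argument is a correct reconstruction of the intended proof; as given, the reduction in your final paragraph is the one genuine gap.
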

\begin{proof}[Proof (sketch)]
    First, note that torsion-free abelian groups are of finite type. Since \(G\) is locally relatively quasiconvex, it follows that every finitely generated subgroup \(U\) of \(G\) is hyperbolic relative to subgroups of finite type, and therefore \(U\) is of finite type by \cite[Theorem 0.1]{Dahmani_typeFparabolics}. Moreover, the intersection of relatively quasiconvex subgroups is relatively quasiconvex, so it follows that \(\rchi(U \cap V^t)\) is defined and finite for all \(t \in T\). Hence, to prove the claim it suffices to show that \(\rchi(U \cap V^t) = 0\) for all but finitely many \(t \in T\).
    
    By \cite[Proposition 1.3]{Jai22}, there are only finitely many \(t \in T\) such that \(U \cap V^t\) contains a loxodromic element. Since \(G\) is torsion-free, this implies that \(U \cap V^t\) is contained in a parabolic for all but finitely many \(t \in T\). Therefore \(U \cap V^t\) is virtually abelian and in particular \(\rchi(U \cap V^t) = 0\) for all but finitely many \(t \in T\).
\end{proof}

For the remainder of the section, assume that the groups appearing are locally indicable and that their group algebras admit Hughes-free embeddings. The next important step in their argument is to reformulate the Strengthened Hanna Neumann Conjecture in terms of $\Dg$-Betti numbers of pairs of modules. First of all, recall the $\Dg$-Betti numbers of $K[G]$-modules introduced in \cref{DgBettieq2}. Let $M$ and $N$ be two left $K[G]$-modules. Form the $K[G]$ module $M \otimes_K N$, where the $G$-action on simple tensors is given by the diagonal action $g \cdot (m \otimes n) := (gm) \otimes (gn)$. The $n$-th $\Dg$-Betti number of the pair $(M,N)$ is defined to be
\[
    \beta_n^{K[G]}(M,N) := \beta_n^{K[G]}(M \otimes_K N).
\]

The next result is proved for limit groups in \cite[Proposition 2.8]{Jai22}. It also holds for the graphs of free groups with cyclic edge groups that we consider, with essentially the same proof.

\begin{prop}[{\cite[Proposition 8.2]{Jai22}}]\label{prop:module}
    Let $G$ be hyperbolic relative to virtually abelian subgroups and locally relatively quasiconvex, and suppose that all finitely generated subgroups of \(G\) are of finite type. Finally, assume that \(b_1^{K[U]}(U) = \rchi(U)\) for every finitely generated subgroup \(U \leqslant G\). Then for any pair of finitely generated subgroups $U,V \leqslant G$, we have
    \begin{equation} \label{accident2}
        \beta_1^{K[G]}(K[G/U], K[G/V]) = \sum_{t \in U \backslash G / V} \rchi(U \cap V^t).
    \end{equation}
    In particular, the Strengthened Hanna Neumann Conjecture for \(G\) is equivalent to having
    \[
        \beta_1^{K[G]}(K[G/U], K[G/V]) \leqslant b_1^{K[U]}(U) \, b_1^{K[V]}(V).
    \]
\end{prop}
\begin{proof}
    Let \(U,V \leqslant G\) be finitely generated subgroups and let \(T\) be a complete set of double \((U,V)\)-coset representatives. We always have the decomposition
    \[
        K[G/U] \otimes_K K[G/V] = \bigoplus_{t \in T} K[G/(U \cap V^t)],
    \]
    so using the fact that the \(\Tor_1^{K[G]}\) functor commutes with direct sums and taking \(\Dg\)-dimensions yields
    \[
        \beta_1^{K[G]}(K[G/U], K[G/V]) = \sum_{t \in T} \beta_1^{K[G]} (K[G/(U \cap V^t)]).
    \]
    Finally, \eqref{accident2} follows from the equation above and the fact that
    \[
        \beta_1^{K[G]}(K[G/U]) = b_1^{K[U]}(U)= \rchi(U)
    \]
    for any finitely generated subgroup \(U \leqslant G\) by \eqref{DgBettieq2} and \cref{prop:HFprops}\ref{item:Shapi0}.  
\end{proof}

\begin{rem}
    We claim that limit groups and finitely generated graphs of free groups with cyclic edge groups that are hyperbolic relative to virtually abelian subgroups satisfy the hypotheses of \cref{prop:module}. Both classes of groups are hyperbolic relative to virtually abelian subgroups and locally relatively quasiconvex by \cref{prop:props_of_GFG_and_limit}. Finitely generated subgroups of graphs of free groups with cyclic edge groups or limit groups are of finite type, a fact which can be deduced from \cref{prop:props_of_GFG_and_limit}\ref{item:locrelQC}, and using the fact that the peripheral subgroups are of finite type in each case. 
    
    Let \(G\) be a nontrivial graph of free groups with cyclic edge groups. Then \(b_n^{K[G]}(G)\) vanishes for \(n \neq 1\) by Chiswell's long exact sequence (\cref{thm:MV}). Since subgroups of graphs of free groups with cyclic edge are again graphs of free groups with cyclic edge groups, it follows that \(\rchi(U) = b_n^{K[U]}(U)\) for any finitely generated subgroup of a graph of free groups with cyclic edge groups \(G\).

    Now suppose that \(G\) is a limit group. Then \(b_n^{K[G]}(G)\) is only nonzero for \(n = 1\), which can be seen in several ways. For example, one can combine \cite[Corollary B]{BridKoch_volumeGradient} with the fact that \(b_n^{K[G]}(G) \leqslant b_n(G;K)\) (see \cite[Corollary 1.6]{JaikinZapirain2020THEUO}, where the result is stated for \(K = \C\), but the proof is identical for any field \(K\)). It is also possible to derive this fact from the result that limit groups are free-by-(torsion-free nilpotent) \cite{Kochloukova_subdirect}. Since all finitely generated subgroups of limit groups are limit groups (this immediate from the characterisation of \cref{thm:limit_are_subgp_of_ICE}). This implies that limit groups satisfy the hypotheses of \cref{prop:module}.
\end{rem}

The last key result from the article of Antolín--Jaikin-Zapirain, is stated for hyperbolic limit groups. The proof in the general setting is nearly identical; we reproduce it here for the sake of completeness and in order to elucidate the differences in the relatively hyperbolic context. 

\begin{prop}[{\cite[Proposition 11.1]{Jai22}}]\label{prop:b20}
    Let $G$ be a locally indicable group that is hyperbolic relative to virtually abelian subgroups. Additionally, assume $G$ that is double coset separable, $\Dg$-Hall, and has the Wilson--Zalesskii property. Let $U,V \leqslant G$ be finitely generated subgroups. Then there exists a normal finite-index subgroup $H \trianglelefteqslant G$ such that $\beta_1^{K[G]}(N) = 0$, where $N$ is the kernel of the map
    \[
        f \colon K[G/U] \otimes_K K[G/V] \lrar K[G/U] \otimes_K K[G/VH].
    \]
\end{prop}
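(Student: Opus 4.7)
The plan is to follow the strategy of \cite[Proposition 11.1]{Jai22}, adapted from the hyperbolic limit case to the relatively hyperbolic setting by invoking \cref{thm:finNonAb}. The first step is to apply the orbit--stabiliser theorem for the diagonal $G$-action on $G/U \times G/V$ to obtain the $K[G]$-module isomorphism
\[
    K[G/U] \otimes_K K[G/V] \;\cong\; \bigoplus_{t \in U \backslash G / V} K[G/(U \cap V^t)],
\]
and analogously for $K[G/U] \otimes_K K[G/VH]$. The map $f$ respects these decompositions: the summand indexed by $t$ lands in the summand indexed by its image $s$ under the natural map $U\backslash G/V \to U\backslash G/VH$, via the natural surjection $K[G/(U \cap V^t)] \twoheadrightarrow K[G/(U \cap (VH)^s)]$ (using $V \subseteq VH$, which gives $V^t \subseteq (VH)^t = V^t H$ by normality of $H$). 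Consequently $N$ decomposes as $N = \bigoplus_s N_s$, where $N_s$ is the kernel over the fibre of $s$.

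Next, by \cref{thm:finNonAb} the set $T_{\mathrm{bad}} := \{t \in U\backslash G/V : U \cap V^t \text{ is not virtually abelian}\}$ is finite. Using double coset separability of $G$ together with the Wilson--Zalesskii property, I would select a normal finite-index subgroup $H \trianglelefteqslant G$ so that: (i) for every $t \in T_{\mathrm{bad}}$, the fibre of $UtVH$ under $U\backslash G/V \to U\backslash G/VH$ is the singleton $\{UtV\}$; and (ii) for every $t \in T_{\mathrm{bad}}$, the equality $U \cap V^t H = U \cap V^t$ holds. Condition (i) follows from applying double coset separability to the finitely many bad cosets, while (ii) follows from Wilson--Zalesskii applied to $U$ and $V^t$. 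Taking $s = t$ as the representative of the corresponding class, condition (ii) combined with $(VH)^t = V^t H$ forces $U \cap (VH)^s = U \cap V^t$; condition (i) guarantees the $s$-fibre is exactly $\{t\}$. Together, each bad summand map becomes the identity of a single $K[G]$-module, so $N_s = 0$ for bad $s$, and $N = N_{\mathrm{good}}$.

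For the remaining good fibres, every stabiliser appearing is virtually abelian: $U \cap V^t$ by definition of ``good,'' and $U \cap (VH)^s$ because it contains a conjugate of $U \cap V^t$ as a subgroup of index at most $[VH : V] < \infty$. By Shapiro's lemma combined with \cref{prop:HFprops}\ref{item:freebetti}, $\beta_n^{K[G]}(K[G/W]) = 0$ for every $n \geqslant 1$ and every virtually abelian subgroup $W \leqslant G$. Applying the long exact $\Tor^{K[G]}_\bullet(\Dg, -)$-sequence to
\[
    0 \longrightarrow N_{\mathrm{good}} \longrightarrow \bigoplus_{t \text{ good}} K[G/(U \cap V^t)] \longrightarrow \bigoplus_{s \text{ good}} K[G/(U \cap (VH)^s)] \longrightarrow 0
\]
sandwiches $\Tor_1^{K[G]}(\Dg, N_{\mathrm{good}})$ between two vanishing terms, yielding $\beta_1^{K[G]}(N) = 0$ as required.

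The main obstacle is the simultaneous construction of $H$ achieving conditions (i) and (ii) for all of $T_{\mathrm{bad}}$ at once; this is where the hypotheses of double coset separability, the Wilson--Zalesskii property, and the $\Dg$-Hall property combine. The task amounts to verifying that the corresponding step of \cite[Section 11]{Jai22} uses only these separability features of $G$ (together with \cref{thm:finNonAb}) and not full hyperbolicity, whereas everything else above --- the Mackey-type decomposition and the $\Tor$-sandwich --- is formal and transports verbatim.
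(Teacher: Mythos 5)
Your proposal follows the right broad outline (decomposition over double cosets, isolating the finitely many non-virtually-abelian ones via \cref{thm:finNonAb}, Shapiro's lemma for the virtually abelian pieces), but there is a fatal gap in how you propose to handle the bad double cosets, and relatedly you never actually make use of the $\Dg$-Hall hypothesis, which is the essential ingredient there.

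Your condition (ii) --- that for each bad $t$ one can choose a finite-index normal $H$ with $U \cap V^t H = U \cap V^t$ --- cannot be achieved in general. Since $H \leqslant V^t H$ and $H$ has finite index in $G$, the subgroup $U \cap V^t H$ has \emph{finite} index in $U$; but $U \cap V^t$ will typically have \emph{infinite} index in $U$, even when it is nonabelian. For a concrete instance inside the scope of the proposition, take $G = F_4 = \lan a,b,c,d \ran$, $U = \lan a,b,c \ran$, $V = \lan a,b,d \ran$, $t = 1$: then $U \cap V = \lan a,b \ran$ is free of rank $2$ (so this is a bad coset) and has infinite index in $U$, whereas $U \cap VH$ has finite index in $U$ for every finite-index $H$. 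The Wilson--Zalesskii property gives you $\overline{U \cap V^t} = \overline{U} \cap \overline{V^t}$ in the profinite completion, which translates to $\bigcap_H (U \cap V^t H) = U \cap V^t$ over \emph{all} finite-index normal $H$; it does not give a single $H$ achieving equality, and my example shows no such $H$ exists. Once (ii) fails, your claim that the bad summand maps are identities --- and hence that $N_s = 0$ for bad $s$ --- collapses: those kernels are genuinely nonzero $K[G]$-modules.

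The paper's route handles precisely this: instead of asking for equality, it uses the $\Dg$-Hall property to produce a finite-index normal $A_t$ with $U \cap V^t$ being $\Dg$-independent in $(U \cap V^t)A_t$, and then Wilson--Zalesskii (via \cite[Corollary~10.4]{Jai22}) to arrange $U \cap (H_t V)^t \leqslant (U \cap V^t)A_t$. The bad summand kernel $I_t$ (the kernel of $K[G](U \otimes tV) \lrar K[G](U \otimes tVH)$) is then nonzero, but its first $\Dg$-Betti number vanishes precisely because $U \cap V^t$ is $\Dg$-independent in the larger group. Also note the paper's basis-change trick: for $t \in T' \smallsetminus T$ (good cosets mapped to the same $(U,VH)$-coset as some $\pi(t) \in T$), it replaces the summand generator $U \otimes tV$ by $U \otimes (tV - \pi(t)V)$, so these summands sit entirely inside $\ker f$ and identify with $K[G/(U \cap V^t)]$ with $U \cap V^t$ virtually abelian; your ``singleton-fibre'' condition (i) is not needed for those, and the paper's $H_0$ is used only to ensure $T' \smallsetminus T$ consists solely of virtually abelian cosets. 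So the missing idea in your proposal is the deployment of $\Dg$-Hall to control the inevitable kernels on the bad summands, rather than trying (and failing) to kill those kernels outright.
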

\begin{proof}
    By \cref{thm:finNonAb}, there are only finitely many double cosets $UtV$ such that $U \cap V^t$ is not virtually abelian. Let $H_0 \leqslant G$ be a finite-index subgroup separating these double cosets $UtV$.

    For each $t$ such that $UtV$ is not virtually abelian, let $A_t \trianglelefteqslant G$ be a finite-index normal subgroup such that $U\cap V^t$ is $\Dg$-independent in $(U\cap V^t)A_t$. By \cite[Corollary 10.4]{Jai22} (which holds for groups with the Wilson--Zalesskii property), there is a finite-index normal subgroup $H_t \trianglelefteqslant G$ such that $U \cap (H_tV)^t \leqslant (U \cap V^t)A_t$ . Set $H = H_0 \cap \bigcap_t A_t$, where $t$ runs over the double coset representatives such that $UtV$ is not virtually abelian.

    Let $T$ be a set of $(U,VH)$ double coset representatives containing $1$, which extends to $T'$, a set of $(U,V)$-double coset representatives. Since $H_0$ separates the non-virtually abelian $(U,V)$-double cosets, it follows that if $x \in T' \smallsetminus T$ then $UxV$ is virtually abelian. Let $\pi \colon T' \lrar T$ be a set-theoretic map with the property that $U\pi(t)VH = UtVH$ for all $t \in T'$. In general we have the $K[G]$-module decomposition
    \[
        K[G/U] \otimes_K K[G/V] \cong \bigoplus_{t \in T'} K[G] (U \otimes tV).
    \]
    However, in order to analyse the kernel of $f$ more easily, it is useful to modify the complement of $\oplus_{t \in T} K[G](U \otimes tV)$ in $K[G/U] \otimes_K K[G/V]$ and obtain the following decomposition:
    \[
        K[G/U] \otimes_K K[G/V] \cong \bigoplus_{t \in T} K[G] (U \otimes tV) \oplus \bigoplus_{t \in T' \smallsetminus T} K[G] (U \otimes (tV - \pi(t)V)).
    \]
    Let $I_t$ denote the kernel of the map $K[G](U \otimes tV) \lrar K[G](U \otimes tVH)$. Then the kernel of $f$ has the decomposition
    \[
        \ker f \cong \bigoplus_{t \in T} I_t \oplus \bigoplus_{t \in T' \smallsetminus T} K[G] (U \otimes (tV - \pi(t)V)).
    \]
    By exactly the same proof as in \cite[Proposition 11.1]{Jai22}, $\beta_1^{(2)}(I_t) = 0$ for each $t \in T$. On the other hand, we have isomorphisms
    \[
        K[G] (U \otimes (tV - \pi(t)V)) \cong K[G] (U \otimes (tV - \pi(t)V)) \cong K[G/(U \cap V^t)].
    \]
    So it suffices to compute $\beta_1^{K[G]}(K[G/A])$ where $A$ is a virtually abelian group, which is equal to $b_1^{K[A]}(A) = 0$ by \eqref{DgBettieq2} and \cref{prop:HFprops}\ref{item:Shapi0}. \qedhere
\end{proof}

With all of this in place, the proof that the $L^2$-hall property implies the SHNC proceeds exactly as in Section 12.1 of \cite{Jai22}, where the following is shown.

\begin{lem}[{\cite[Section 12.1]{Jai22}}]\label{lem:AJZ_section_12_1}
    Let \(G\) be a torsion-free virtually compact special group such that \(b_n^{K[G]}(G) = 0\) for all \(n \neq 1\), and let \(U, V \leqslant G\) be finitely generated subgroups. If there is a finite-index normal subgroup \(H \trianglelefteqslant G\) such that 
    \[
        \beta_1^{K[G]}(M) = 0 = \beta_1^{K[G]}(K[G/U],N),
    \]
    where
    \[
        M = \ker(K[G/U] \lrar K[G/UH]) \quad \text{and} \quad N = \ker(K[G/V] \lrar K[G/VH]),
    \]
    then \(\beta_1^{K[G]}(K[G/U], K[G/V]) \leqslant b_1^{K[U]}(U) b_1^{K[V]}(V)\).
\end{lem}

The details of the argument are rather technical and rely on the theory of acceptable modules over twisted group rings, which is developed in \cite[Section 6]{Jai22}. All the results of \cite[Section 6]{Jai22} hold for all virtually compact special groups, except for \cite[Proposition 6.4]{Jai22}, which is stated only for limit groups. However, the only properties of limit groups that are used in the proof are that they are virtually compact special and that their \(L^2\)-Betti numbers vanish in all dimension other than \(1\). We are now ready to conclude that the SHNC holds for graphs of free groups with cyclic edge groups that are hyperbolic relative to virtually abelian subgroups limit groups.

\begin{cor}\label{cor:HNGFGCEG}
    Let $G = \pi_1(X)$ be a graph of free groups with cyclic edge groups such that $\Phi_X$ is balanced and solvable (equivalently, such that $G$ is hyperbolic relative to virtually abelian subgroups) or let $G$ be a limit group. Then $G$ satisfies the Strengthened Hanna Neumann Conjecture.
\end{cor}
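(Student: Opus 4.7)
The plan is to run the Antolin--Jaikin-Zapirain implication ``$L^2$-Hall $\Rightarrow$ GHNC'' from \cite[Sections 8--12]{Jai22}, assembling the necessary hypotheses from the results recorded at the start of this section. The preceding lemma verifies that in both cases $G$ is hyperbolic relative to virtually abelian subgroups, locally relatively quasiconvex, double coset separable (and hence enjoys the Wilson--Zalesskii property), and $L^2$-Hall. These are precisely the ingredients that feed \cref{thm:finNonAb} and \cref{prop:b20}, so the arguments originally phrased for hyperbolic limit groups can be invoked essentially without modification.

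First I would reformulate the GHNC as an inequality of $L^2$-Betti numbers of pairs of modules. Since $G$ has cohomological dimension two and satisfies $\b_2(G) = 0$, every non-trivial finitely generated $U \leqslant G$ has $\rchi(U) = \b_1(U)$, so \cref{prop:module} reduces the statement to
\[
    \b_1(U)\,\b_1(V) \leqslant \beta_1^{(2)}(\C[G/U],\C[G/V])
\]
for all finitely generated $U,V \leqslant G$, the left-hand side being finite by \cref{thm:finNonAb}. I would then apply \cref{prop:b20} to obtain a finite-index normal subgroup $H \trianglelefteqslant G$ for which the kernel $N$ of the natural surjection $\C[G/U]\otimes_{\C}\C[G/V] \to \C[G/U]\otimes_{\C}\C[G/VH]$ has $\beta_1^{(2)}(N)=0$, shrinking $H$ further using the $L^2$-Hall property so that $U \cap H$ and $V \cap H$ are $L^2$-independent in suitable finite-index overgroups inside $H$.

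The last step is the acceptable-module computation of \cite[Section 12]{Jai22}. The long exact sequence in $\mathrm{Tor}_{\bullet}^{\C[G]}(\mathcal{D}_{\C[G]},-)$ attached to $0 \to N \to \C[G/U]\otimes_{\C}\C[G/V] \to \C[G/U]\otimes_{\C}\C[G/VH] \to 0$ combined with the vanishing of $\beta_1^{(2)}(N)$ yields
\[
    \beta_1^{(2)}(\C[G/U],\C[G/V]) \geqslant \beta_1^{(2)}(\C[G/U],\C[G/VH]),
\]
and the right-hand side dominates $\b_1(U)\,\b_1(V)$ through the theory of acceptable modules over the twisted group ring $\mathcal{D}_{\C[H]} * (G/H)$, with the $L^2$-independence of $U$ and $V$ inside $H$ supplying exactly the dimension estimates that the framework requires. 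The main obstacle is not in these computations, which are verbatim from \cite{Jai22}, but in the fact that the Wilson--Zalesskii property was previously unavailable outside the hyperbolic limit group case; this gap is closed by the work of Minasyan--Mineh \cite{MinasyanMineh_QC} and Minasyan \cite{Minasyan_WZ}, while the $L^2$-Hall hypothesis is provided by Theorems \ref{thm:A} and \ref{thm:B}.
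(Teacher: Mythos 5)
Your proposal matches the paper's argument closely: both assemble the hypotheses (relative hyperbolicity, local relative quasiconvexity, double coset separability with the Wilson--Zalesskii property, and the $L^2$-Hall property) from the same preliminary lemma, reduce the GHNC via \cref{prop:module} to an inequality of $\Dg$-Betti numbers of pairs of modules, invoke \cref{thm:finNonAb} for finiteness of the sum and \cref{prop:b20} for the vanishing of $\beta_1^{(2)}$ of the kernel, and then defer the final acceptable-module computation to Section~12 of Antol\'in--Jaikin-Zapirain verbatim. One small imprecision worth correcting: you justify the reduction $\rchi(U) = \b_1(U)$ by ``$G$ has cohomological dimension two,'' which is fine for graphs of free groups with cyclic edge groups but fails for limit groups of higher abelian rank (e.g.\ $\Z^n$ has $\cd = n$); the property that is actually used, and which holds in both classes, is the vanishing of $\b_n(U)$ for $n \neq 1$ and every non-trivial finitely generated $U \leqslant G$.
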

\begin{proof}
    By \cref{prop:props_of_GFG_and_limit} and \cref{prop:b20}, for any pair of finitely generated subgroups \(U,V \leqslant G\), there is a normal finite-index subgroup \(H \trianglelefteqslant G\) such that \(\beta_1^{(2)}(L) = 0\), where \(L\) is the kernel of
    \[
        \Q[G/U] \otimes_\Q \Q[G/V] \lrar \Q[G/U] \otimes_\Q \Q[G/VH].
    \]
    Since \(L = \Q[G/U] \otimes_\Q N\), where \(N = \ker(\Q[G/V] \lrar \Q[G/VH])\), it follows immediately that \(\beta_1^{\Q[G]}(\Q[G/U]) = 0\).

    We may also choose \(H\) deep enough so that \(U\) is \(L^2\)-independent in \(UH\). Let \(M\) be the kernel of \(\Q[G/U] \lrar \Q[V/UH]\). By \cite[Proposition 4.22]{Jai22}, this implies that \(\beta_1^{\Q[G]}(M) = 0\). By \cref{prop:module,lem:AJZ_section_12_1}, we conclude that \(G\) satisfies the SHNC. \qedhere
\end{proof}

\bibliographystyle{alpha}
\bibliography{bib}

\end{document}